\newtheorem{theo}{Theorem}
\newtheorem{lemma}{Lemma}
\newtheorem{prop}{Proposition}
\theoremstyle{definition}
\newtheorem{rem}{Remark}
\newtheorem{defi}{Definition}
\numberwithin{equation}{section}
\numberwithin{lemma}{section} 
\numberwithin{defi}{section} 
\numberwithin{rem}{section} 
\newcommand \dps{\displaystyle }
\newcommand{\ho}{\mathrm{hom}}
\newcommand{\R}{\mathbb{R}}
\newcommand{\Z}{\mathbb{Z}}
\newcommand{\N}{\mathbb{N}}
\newcommand{\T}{\mathbb{T}}
\newcommand{\e}{\varepsilon}
\newcommand{\LL}{\mathcal{L}}
\newcommand{\calF}{\mathcal{F}}
\newcommand{\ee}{\mathbf{e}}
\newcommand{\ener}{\mathcal{E}}
\newcommand{\B}{\mathbb{B}}
\def\aa{\boldsymbol a}
\mathchardef\emptyset="001F
\newcommand{\dig}[1]{\mathrm{diag}\left[ #1\right]}
\newcommand{\var}[1]{\mathrm{var}\left[#1\right]}
\newcommand{\cov}[2]{\mathrm{cov}\left[#1;#2\right]}
\newcommand{\covL}[2]{\mathrm{cov}_L\left[#1;#2\right]}
\newcommand{\expec}[1]{\mathbb{E}\left[ #1 \right]}
\newcommand{\step}[1]{\noindent \textit{Step} #1.}
\newcommand{\substep}[1]{\noindent \textit{Substep} #1.}
 \newcommand{\E}[1]{\mathbb{E}\left[#1\right]}
 \newcommand{\EL}[1]{\mathbb{E}_L\left[#1\right]}
\newcommand{\expE}{\mathbb{E}}
\newcommand{\Pm}{\mathbb{P}}
\newcommand{\Rm}{\mathbb{R}}
\newcommand{\no}{\nonumber}
\newcommand{\br}{\begin{eqnarray}}
\newcommand{\er}{\end{eqnarray}}
\newcommand{\commentout}[1]{}
\title[Quantitative CLT for the effective conductance]
{A quantitative central limit theorem for the effective conductance on the discrete torus}
\author[A. Gloria]{Antoine Gloria}
\author[J. Nolen]{James Nolen}
\date{\today}
\address[Antoine Gloria]{D\'epartement de math\'ematique, Universit\'e Libre de Bruxelles, Belgium \\  and MEPHYSTO team, Inria Lille - Nord Europe, Villeneuve d'Ascq, France}
\email{agloria@ulb.ac.be}
\address[James Nolen]{Department of Mathematics, Duke University, Durham, North Carolina, USA}
\email{nolen@math.duke.edu}
\begin{document}
\maketitle

\begin{center}
\begin{minipage}{13cm}
\small{
\noindent {\bf Abstract.} We study a random conductance problem on a $d$-dimensional discrete torus of size $L > 0$. The conductances are independent, identically distributed random variables uniformly bounded from above and below by positive constants. The effective conductance $A_L$ of the network is a random variable, depending on $L$, and the main result is a quantitative central limit theorem for this quantity as $L \to \infty$. 
In terms of scalings we prove that this nonlinear nonlocal function $A_L$  essentially behaves as if it were a simple spatial average of the conductances (up to logarithmic corrections).
The main achievement of this contribution is the precise asymptotic description of the variance of $A_L$.

\vspace{10pt}
\noindent {\bf Keywords:} 
CLT, variance estimate, stochastic homogenization, random conductance.

\vspace{6pt}
\noindent {\bf 2000 Mathematics Subject Classification:} 35B27, 39A70, 60H25, 60F99.}

\end{minipage}
\end{center}


\section{Introduction and statement of the main result}

This article is about a random conductance problem on the integer lattice $\Z^d$. We regard $\Z^d$ as a graph with edge set $\B = \{ (x,z) \in \Z^d \times \Z^d \;|\;  |x-z|=1\}$. For edges $(x,z) \in \B$, we also write $x\sim z$.  We define the set of conductances $a(e)$ on the edges $e \in \B$  by $\Omega=[\lambda,1]^\B$ for some fixed $0<\lambda\le 1$. We equip $\Omega$ with the $\sigma$-algebra $\calF$ generated by cylinder sets and with a probability measure $\mathbb P$. We assume that $\Pm$ is invariant under the group of transformations $\tau_z:\Omega \to \Omega$ defined by $a(\cdot ) \mapsto a(z + \cdot)$ for all $z \in \Z^d$, and that this group of transformations acts ergodically on $(\Omega,\calF,\mathbb{P})$.

A realization $a \in \Omega$ is a countable set $\{a(e)\}_{e\in \B}$ of conductances and is called an environment. 
If $X_t$ is a continuous-time random walk in this random environment $a$ with infinitesimal generator
$$
\LL u(x)\,:=\,\sum_{z\sim x}a(x,z)(u(z)-u(x)),
$$
acting on functions $u:\Z^d \to \R$, then the rescaled walk $\e X_{t/\e^2}$ (parabolic scaling) converges to a Brownian motion in $\R^d$ with some covariance $2A_\ho$, as $\e \to 0$ (in a $\mathbb P$-annealed sense \cite{Kipnis-Varadhan-86}, and in a quenched sense \cite{Sidoravicius-Sznitman-04,Mathieu-08} if $\Pm$ is a product measure, for example). We refer to $A_\ho$ as the effective conductivity of the random environment, and it is characterized by
\begin{equation}\label{eq:hom-coeff}
\xi \cdot A_\ho \xi \,=\,\E{(\xi+\nabla \phi)\cdot \aa (\xi+\nabla \phi)(0)},
\end{equation}
for all $\xi \in \R^d$ with $|\xi|=1$, where $\phi$ and $\aa$ are defined as follows. For $x\in \Z^d$, we set $\aa(x):=\dig{a(x,x+\ee_1),\dots,a(x,x+\ee_d)}$ where $\ee_i$ is the $i^{th}$ standard basis vector in $\Rm^d$. We use $\nabla$ to denote the forward discrete gradient $(u:\Z^d\to \R)\mapsto (\nabla u:\Z^d \to \R^d)$ defined componentwise by $[\nabla u(x)]_i=u(x+\ee_i)-u(x)$ for all $i\in \{1,\dots,d\}$, 
and $\phi:\Z^d\times \Omega\to \R$ is defined, for almost every realization $a$, as the unique solution of
$$
\LL(\xi\cdot x+\phi(x,\aa))\,=\,0 \quad \text{in }\Z^d, \quad \phi(0,\aa)=0.
$$
This $\phi$ is called the corrector in the direction $\xi$. The generator $\LL$ can also be written as
\[
\LL u(x)=-\nabla^*\cdot \aa(x)\nabla u(x)
\]
for all $u:\Z^d\to \R$ and all $x\in \Z^d$, where $\nabla^*$ is the backward discrete gradient $(u:\Z^d\to \R)\mapsto (\nabla^* u:\Z^d \to \R^d)$ defined componentwise by $[\nabla^* u(x)]_i=u(x)-u(x-\ee_i)$ for all $i\in \{1,\dots,d\}$.
Since $\nabla \phi$ is a stationary function of $\aa$ (that is, $\nabla \phi(x+z,\aa)=\nabla \phi(x,\aa(\cdot+z))$ for all $x,z\in \Z^d$ and $\mathbb P$-almost every $\aa$), and the measure $\mathbb P$ is preserved by translation of $\aa$,  the expectation in \eqref{eq:hom-coeff} does not depend on the point where the argument is taken. Throughout this paper, $\Pm$ will be a product measure $\Pm = \pi^{\otimes \B}$ where $\pi$ is a Borel probability measure on $[\lambda, 1]$. In this case, the conductivity $A_\ho$ is actually a scalar multiple of the identity matrix. In particular, $A_\ho$ is independent of the unit vector $\xi$.  Henceforth, we will use $A_\ho$ to denote this scalar quantity.

\commentout{
Formula \eqref{eq:hom-coeff} is also at the core of the stochastic homogenization theory, which relates the almost sure large scale behavior of $\LL^{-1}$ to the large scale behavior of the inverse of the continuum Laplacian $(-A_\ho\triangle)^{-1}$. Indeed, \eqref{eq:hom-coeff} is used in combination with the ergodic theorem
}

The convergence of the rescaled walk $\e X_{t/\e^2}$ to a Brownian motion with covariance $2A_\ho$ may be regarded as a central limit theorem (quenched or annealed) for a random walk in a random environment. Alternatively, from the point of view of stochastic homogenization theory for PDEs \cite{Papanicolaou-Varadhan-79,Kozlov79}, $A_\ho$ is the effective or homogenized coefficient for the operator $\LL$ (or its continuum analogue). In this context, formula \eqref{eq:hom-coeff} for $A_\ho$ comes from an application of the ergodic theorem, in the form of the almost sure identity
\begin{equation}
A_\ho\,=\,\lim_{L\uparrow \infty} L^{-d}\sum_{x\in [0,L)^d\cap \Z^d}(\xi+\nabla \phi)\cdot \aa (\xi+\nabla \phi)(x). \label{Ahoergodic}
\end{equation}
Viewing \eqref{Ahoergodic} as an ergodic theorem, one may ask whether a central limit theorem also holds for an appropriate renormalization of the sum in \eqref{Ahoergodic}. This question is highly non trivial since the gradient of the corrector $\nabla \phi(\cdot,\aa)$ is a nonlinear nonlocal function of the random field $\aa$, and it is not clear how (and even whether) mixing properties of $\aa$ can be transmitted to $\nabla \phi$. Nevertheless, understanding the map $\aa \mapsto \nabla \phi(\cdot,\aa)$ and the statistics of quantities like the sum in (\ref{Ahoergodic}) is the key to the development of a quantitative theory of stochastic homogenization.

In addition to its theoretical interest, stochastic homogenization is used as a practical tool  in scientific computing to reduce the complexity of computations for highly heterogeneous materials, see for instance \cite{Efendiev-Hou-09,E-12} in the applied mathematics community and \cite{Torquato-02,KFGMJ-03} in the applied mechanics community.
In order to make practical use of the homogenization theory, one needs to approximate $A_\ho$. When the law of $\aa$ can be periodized on large scales (cf. discussion in \cite[Paragraph~3.2.3]{EGMN-12}), a general consensus is that $A_\ho$ should be approximated by periodization.
More precisely, following \cite[Section~5]{Gloria-Neukamm-Otto-14}, we define for $L \in \N$ the set $\T_L = [0,L)^d \cap \Z^d$ which we regard as a discrete torus of size $L$, and we define the edges $\B_L = \{ (x,x + \ee_i) \;|\; x \in \T_L, \; i = 1,\dots,d \}$. For all $x,z\in \T_L$ we say that $x\sim z$ if $|x-z|=1$, where $|\cdot|$ is the distance on the torus. We define a periodic extension of the coefficient $a$ by
\begin{equation}
a_L( x + L z, x + L z + \ee_i ) = a(x, x + \ee_i),  \quad \forall\; x \in \T_L, \;\; z \in \Z^d. \label{aLdef}
\end{equation}
Equivalently, 
\begin{equation}
\aa_L(x + L z) = \aa(x), \quad \forall\; x \in \T_L, \;\; z \in \Z^d. \label{aLdef2}
\end{equation}
The map $a \mapsto a_L$ defined in this way maps $\Omega$ to $\Omega$; under this map, the measure $\Pm$ pushes forward to a measure which concentrates on $L$-periodic coefficients. Therefore, we regard $a_L$ as an element of $\Omega_L = [\lambda,1]^{\B_L}$ which is equipped with the product measure $\Pm_L = \pi^{\otimes \B_L}$, $\pi$ being the measure on $[\lambda,1]$ associated with $\mathbb P=\pi^{\otimes \B}$. The periodic approximation $\phi_L$ of $\phi$ is now defined to be the unique $L$-periodic solution of the corrector equation
\begin{equation}\label{eq:corr-L}
-\nabla^*\cdot \aa_L (\xi+\nabla \phi_L(x))\,=\,0, \quad x \in \T_L,
\end{equation}
satisfying $\sum_{x \in \T_L} \phi_L(x) = 0$, and we define the averaged energy density $A_L$ (or effective conductance on the discrete torus $\T_L$) as
\begin{equation}\label{eq:hom-coeff-per}
A_L\, =\,L^{-d}\sum_{x \in \T_L}(\xi+\nabla \phi_L)\cdot \aa_L (\xi+\nabla \phi_L)(x).
\end{equation}
Observe that $A_L$ and $\phi_L$ depend implicitly on the unit vector $\xi$, whereas $A_\ho$ does not depend on $\xi$; nevertheless, the estimates of this manuscript will be uniform in $\xi$.
The contributions of Neukamm, Otto, and the first author \cite{Gloria-Neukamm-Otto-14} 
(which started with \cite{Gloria-Otto-09}, based upon \cite{Naddaf-Spencer-98}) established that
\begin{equation}\label{eq:scaling-GNO}
\expec{|A_L-A_\ho|^2} \,\lesssim\, L^{-d}.
\end{equation}
This estimate contains two statements: the control of the random error due to fluctuations of $A_L$ around $\expec{A_L}$ and the control of the systematic error $A_\ho-\expec{A_L}$ due to periodization of the law (see  \cite[Theorem~2]{Gloria-Neukamm-Otto-14}).
As empirical evidence suggests \cite{E-Yue-07},  not only in terms of scaling but also in terms of convergence in law the associated CLT is expected to hold true.
As a second step towards a CLT, the second author proved in the continuum setting \cite{Nolen-11, Nolen-14} (where the discrete elliptic equation on the torus is replaced by a divergence form linear elliptic PDE) that the fluctuations of $A_L-\expec{A_L}$ (rescaled by the square-root of the variance) are asymptotically normal.

%
%
%

The aim of this contribution is to go beyond the scaling \eqref{eq:scaling-GNO} and the asymptotic normality, and prove the associated full central limit theorem for $A_L$ by estimating the Kolmorogov distance of  $L^{\frac{d}{2}}(A_L-A_\ho)$ to a normal variable. Recall that if $X$ and $Y$ are two real-valued random variables, then the Kolmogorov distance between their distributions is
\[
d_K(X,Y) = \sup_{t \in \Rm} |\Pm(X \leq t) - \Pm(Y \leq t)|.
\]

\begin{theo}\label{th:main}
Let $d\ge 2$.
Let $\mathbb P$ be a nontrivial product measure, $\xi \in \R^d$ with $|\xi|=1$ be fixed, $A_L$ be given by \eqref{eq:hom-coeff-per}, and $A_\ho$ by \eqref{eq:hom-coeff}.
Then there exists $\sigma>0$ such that for all $L\in \N$ we have
\begin{equation}\label{eq:th-main}
d_{K} \Big(L^{\frac{d}{2}}\frac{A_L-A_\ho}{\sigma},\mathcal G \Big)\,\lesssim \, L^{-\frac{d}{2}}\log^dL,
\end{equation}
where $\mathcal G$ is a standard normal variable.

\qed
\end{theo}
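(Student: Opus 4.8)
The plan is to combine a quantitative martingale CLT (in the form of Stein's method, or equivalently the Lindeberg/martingale-difference approach) with the spectral-gap/variance machinery already used to obtain \eqref{eq:scaling-GNO}. Write $A_L - \E{A_L}$ as a telescoping sum of martingale increments $D_e := \E{A_L \mid \calF_{\le e}} - \E{A_L \mid \calF_{< e}}$ over an enumeration of the edges $e \in \B_L$, where $\calF_{\le e}$ is the $\sigma$-algebra generated by the first $e$ conductances. The key deterministic input is a pointwise sensitivity estimate: the discrete derivative $\partial A_L/\partial a(e)$ is, up to the boundedness of $\aa_L$ and its inverse, controlled by $|\xi + \nabla\phi_L|(x_e)\,|\xi + \nabla\phi_L^*|(x_e)$ where $\phi_L^*$ is the corrector for the transpose operator (here the coefficients are diagonal so $\phi_L^* = \phi_L$), times the Green-function-type response of $\phi_L$ to a change of $a(e)$. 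Averaging the square of this over the environment, using the $L^p$ bounds on $\nabla\phi_L$ (which follow from the same arguments behind \eqref{eq:scaling-GNO}, i.e. Meyers-type estimates plus the concentration inequalities) gives $\E{D_e^2} \lesssim L^{-2d}$, hence $\var{A_L} = \sum_e \E{D_e^2} \asymp L^{-d}$, and from the lower bound (nontriviality of $\pi$) one extracts the constant $\sigma > 0$ with $\sigma^2 = \lim_L L^d \var{A_L}$ — or, more carefully, one shows $L^d\var{A_L}$ is bounded above and below and absorbs the fluctuation of the normalizing constant into the final error term.

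Next I would invoke a quantitative CLT for sums of martingale differences — concretely the Stein-method bound of the type
\[
d_K\!\left(\frac{A_L - \E{A_L}}{\sqrt{\var{A_L}}}, \mathcal G\right) \;\lesssim\; \frac{1}{\var{A_L}^{3/2}} \sum_{e \in \B_L} \E{|D_e|^3} \;+\; \frac{1}{\var{A_L}}\left(\var{\sum_e \E{D_e^2 \mid \calF_{<e}}}\right)^{1/2},
\]
or its analogue via second-order Poincaré / Malliavin–Stein on Poisson-type or product spaces. The first (Lyapunov-type) term is handled exactly as the variance: an $L^3$-in-probability bound on $\partial A_L/\partial a(e)$ gives $\E{|D_e|^3} \lesssim L^{-3d}\log^{?}L$, so summing over the $\asymp L^d$ edges and dividing by $\var{A_L}^{3/2} \asymp L^{-3d/2}$ yields $\lesssim L^{-d/2}$ up to logarithms. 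The logarithmic corrections $\log^d L$ in \eqref{eq:th-main} come precisely from the borderline decay of the gradient of the periodic Green's function in dimension — the same source of logarithms that appears in the moment bounds on $\nabla\phi_L$ and its second mixed derivatives.

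The genuinely hard part is the second term, the stability of the conditional variance $\sum_e \E{D_e^2\mid \calF_{<e}}$, i.e. showing its fluctuations are of lower order than $\var{A_L}^2 \asymp L^{-2d}$. This requires a \emph{second-order} sensitivity analysis: one must estimate how $\partial A_L/\partial a(e)$, and hence $D_e$, itself responds to a change in another conductance $a(f)$, which amounts to a bound on $\partial^2\phi_L/\partial a(e)\partial a(f)$ and mixed products of (gradients of) periodized Green's functions $G_L$ summed against $|\xi+\nabla\phi_L|^2$. The strategy is: differentiate the corrector equation \eqref{eq:corr-L} twice, represent $\partial^2\phi_L$ via two applications of $G_L$, use the decay estimates $|\nabla\nabla G_L(x,y)| \lesssim (1+|x-y|)^{-d}$ (with the torus correction) together with the $L^p(\Omega)$ bounds on $\nabla\phi_L$, and then carry out the double sum over $e,f$ on $\T_L$; the off-diagonal contributions decay fast enough that the variance of the conditional quadratic variation is $O(L^{-3d}\log^{c}L) = o(L^{-2d})$, contributing $\lesssim L^{-d/2}\log^{c'} L$ after renormalization. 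Collecting the two error terms and reconciling $\sqrt{\var{A_L}}$ with the fixed $\sigma$ (using $|\sigma^2 - L^d\var{A_L}| \lesssim L^{-d/2+\delta}$, again from the systematic-error control in \cite{Gloria-Neukamm-Otto-14} and the two-point estimates) produces \eqref{eq:th-main}. I expect the bookkeeping of the double Green's-function sums — and extracting exactly the power $\log^d L$ rather than a worse logarithmic power — to be where most of the technical work lies.
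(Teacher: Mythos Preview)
Your normal-approximation sketch is in the right spirit and close to what the paper actually does: the paper also uses a Stein-type bound (Chatterjee's method for functions of independent variables, refined by Lachi\`eze-Rey and Peccati), and the key inputs are exactly the first- and second-order sensitivity estimates $\Delta_j A_L$ and $\Delta_k\Delta_j A_L$ together with annealed bounds on $\nabla\nabla G_L$. So the CLT for $(A_L-\E{A_L})/\sigma_L$ with $\sigma_L^2=L^d\var{A_L}$ goes through essentially as you describe, yielding an error of order $L^{-d/2}\log L$.

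The genuine gap is in the last paragraph. You treat the replacement of $\sigma_L$ by a fixed $\sigma$ as a bookkeeping issue, writing that $|\sigma^2-L^d\var{A_L}|\lesssim L^{-d/2+\delta}$ follows ``from the systematic-error control in \cite{Gloria-Neukamm-Otto-14} and the two-point estimates''. It does not. The results of \cite{Gloria-Neukamm-Otto-14} give $\var{A_L}\lesssim L^{-d}$ and $|\E{A_L}-A_\ho|\lesssim L^{-d}\log^d L$, but say nothing about the convergence of $L^d\var{A_L}$, let alone its rate. This is in fact the main contribution of the paper, and it is \emph{not} accessible by the techniques you mention: the formal limiting expression $\sigma^2=\sum_{z\in\Z^d}\cov{\ener(z)}{\ener(0)}$ with $\ener=(\xi+\nabla\phi)\cdot\aa(\xi+\nabla\phi)$ is a sum whose terms decay like $|z|^{-d}$ and is not absolutely convergent, so one cannot even define $\sigma$ directly; nor can one simply ``absorb the fluctuation of the normalizing constant'', since without a rate on $|\sigma_L-\sigma|$ the final Kolmogorov bound would be dominated by whatever that rate is.

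The paper circumvents this by an indirect argument: introduce the massive approximation $\phi_{L,\mu}$ (and, crucially, its Richardson extrapolations $\phi_{L,k,\mu}$ to reach the required accuracy in dimensions $d\ge 5$), show via spectral theory that $\E{|\nabla\phi_L-\nabla\phi_{L,k,\mu}|^4}\lesssim\mu^d$ for $k>d/4$, upgrade second moments to fourth moments via a logarithmic-Sobolev inequality, and then compare $\sigma_{L,k,\mu}$ across different $L$ using the exponential localization in $\mu^{-1/2}$. Optimizing in $\mu$ shows that $(\sigma_L)$ is Cauchy with the claimed rate. None of this machinery appears in your proposal; the second-order sensitivity analysis you outline is the right tool for the conditional-variance term in the Stein bound, but it does not by itself control $|\sigma_L^2-\sigma_{L'}^2|$.

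A smaller omission: you never address the deterministic shift $\E{A_L}-A_\ho$, which must also be $O(L^{-d}\log^d L)$ for the theorem as stated (with centering at $A_\ho$, not at $\E{A_L}$). This is Proposition~\ref{th:systematic}, quoted from \cite{Gloria-Neukamm-Otto-14}, so it is available, but it should be invoked explicitly.
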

\begin{rem}
The same statement also holds for the 1-Wasserstein distance instead of the Kolmogorov distance.
\end{rem}
On the one hand, this result gives a complete numerical analysis  of the widely-used periodization method in computational mechanics. On the other hand, it is the first quantitative central limit theorem in stochastic homogenization (a qualitative version has been proved in \cite{Biskup-Salvi-Wolff-14}, albeit in the perturbation regime of small ellipticity contrast).
It shows in particular that $L^{\frac{d}{2}}(A_L-A_\ho)$ fluctuates as the centered Gaussian of variance $\sigma^2$ up to a small error in Kolmogorov distance.
In view of the recent contributions \cite{Gloria-Otto-14,Gloria-Otto-14b,Armstrong-Smart-14,Gloria-Marahrens-14, Nolen-14}, we believe this result holds true for a continuum equation as well.

\medskip

The result of Theorem~\ref{th:main} is probabilistic in nature. However, the arguments of probability theory are wrapped up into some well-known estimates that hold for product measures, and most of the work relies on analysis.
Estimate~\eqref{eq:th-main} combines three statements:
\begin{itemize}
\item[(i)] a normal approximation estimate for $A_L$, 
\item[(ii)] an estimate of the systematic error $\E{A_L}-A_\ho$, 
\item[(iii)] an estimate of the rescaled variance $L^d\var{A_L}-\sigma^2$.
\end{itemize}
Let us quickly address (i) and (ii).
The normal approximation of $A_L$ was already unravelled by the second author in \cite{Nolen-11, Nolen-14}
in the continuum setting, however with the 1-Wasserstein distance instead of the Kolmogorov distance.
Its proof exploits the product structure of the law in two respects: the validity of the Efron-Stein inequality (in the spirit of the covariance estimate of Lemma~\ref{lem:covar}) and the refinement by Chatterjee \cite{Chatterjee-08, Chatterjee-09} of Stein's method.
To make this strategy work, moment bounds on $\nabla \phi_L$ are needed. These are obtained in \cite{Nolen-11} in the continuum setting following the approach of \cite{Gloria-Otto-09} (they are however suboptimal for $d=2$) and using averaged gradient bounds on the Green function. 
The adaptation of this approach to the discrete setting dealt with here is straightforward (and optimal for all $d\ge 2$) given the moment bounds of \cite{Gloria-Neukamm-Otto-14}, the annealed estimates on the Green function of \cite{Marahrens-Otto-13}, and the recent refinement of Chatterjee's method \cite{Chatterjee-08} by Lachi\`eze-Rey and Peccati \cite{LRP-15} for the Kolmogorov distance (see Theorem \ref{theo:normalapprox} below).
The result is the following proposition. We display its proof for completeness in Section~\ref{sec:normal}:
\begin{prop}\label{th:normal}
Let  $\sigma_L^2 := L^d \var{A_L}$. Then
\begin{equation}\label{eq:normal}
d_{K} \Big(L^{\frac{d}{2}}\frac{A_L-\E{A_L}}{ \sigma_L},\mathcal G \Big)\,\lesssim \,  L^{-\frac{d}{2}}\log L.
\end{equation}
\qed
\end{prop}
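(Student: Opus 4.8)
The plan is to view $A_L$ as a (nonlinear, nonlocal) function of the finite i.i.d.\ family $(a(e))_{e\in\B_L}$ and to apply the abstract normal approximation estimate for functionals of product measures --- Theorem~\ref{theo:normalapprox} below, the Kolmogorov-distance refinement by Lachi\`eze-Rey and Peccati \cite{LRP-15} of Chatterjee's second-order Poincar\'e inequality. For $e\in\B_L$ let $D_eA_L$ denote the increment of $A_L$ under resampling of $a(e)$ by an independent copy, let $D^2_{e,e'}A_L:=D_eD_{e'}A_L$, and for an edge $e$ issued from the vertex $x_e$ in coordinate direction $i$ write $f(e):=[f(x_e)]_i$ for $f:\Z^d\to\R^d$ (so that $(\xi+\nabla\phi_L)(e)$ is a scalar). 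Since $L^{d/2}(A_L-\E{A_L})/\sigma_L=(A_L-\E{A_L})/\var{A_L}^{1/2}$, that theorem yields, schematically,
\[
d_{K}\Big((A_L-\E{A_L})/\var{A_L}^{1/2},\,\mathcal G\Big)\,\lesssim\,\frac{1}{\var{A_L}}\sqrt{\mathcal S}\;+\;\frac{1}{\var{A_L}^{3/2}}\sum_{e\in\B_L}\E{|D_eA_L|^3},
\]
with $\mathcal S:=\sum_{e,e',e''\in\B_L}\E{(D_eA_L)^2(D_{e'}A_L)^2}^{1/2}\,\E{(D^2_{e,e''}A_L)^2(D^2_{e',e''}A_L)^2}^{1/2}$. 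Using the lower bound $\var{A_L}\gtrsim L^{-d}$ (discussed below; the matching upper bound is contained in \eqref{eq:scaling-GNO}), everything reduces to $L^q$-moment bounds on the first discrete derivative $D_eA_L$ and on the mixed second discrete derivative $D^2_{e,e'}A_L$, the latter with sharp spatial decay in $|x_e-x_{e'}|$.

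For the first derivative I would use the variational characterization $A_L(a)=L^{-d}\min_{\psi}\sum_{x\in\T_L}(\xi+\nabla\psi)\cdot\aa_L(\xi+\nabla\psi)(x)$, attained at $\psi=\phi_L$. Testing the minimization for the environment $a$ with the corrector $\phi_L^e$ of the resampled environment $a^e$ (and conversely), and using that $a$ and $a^e$ differ only at the edge $e$, gives the two-sided comparison
\[
L^{-d}\big(a(e)-a^e(e)\big)\,|(\xi+\nabla\phi_L)(e)|^2\;\le\;A_L(a)-A_L(a^e)\;\le\;L^{-d}\big(a(e)-a^e(e)\big)\,|(\xi+\nabla\phi_L^e)(e)|^2,
\]
whence $|D_eA_L|\lesssim L^{-d}\big(1+|\nabla\phi_L(e)|^2+|\nabla\phi_L^e(e)|^2\big)$. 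The uniform-in-$L$ (stretched exponential) moment bounds on $\nabla\phi_L$ of \cite{Gloria-Neukamm-Otto-14} then give $\E{|D_eA_L|^q}\lesssim_q L^{-dq}$ for every $q\ge1$; in particular $\sum_e\E{|D_eA_L|^3}\lesssim L^{-2d}$, $\E{(D_eA_L)^2(D_{e'}A_L)^2}^{1/2}\lesssim L^{-2d}$, and $\sum_e\E{(D_eA_L)^2}\lesssim L^{-d}$, consistently via Efron--Stein with \eqref{eq:scaling-GNO}.

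For the second derivative, differentiating the corrector equation \eqref{eq:corr-L} in $a(e')$ shows that the sensitivity of $(\xi+\nabla\phi_L)$ at an edge $e$ to a change at an edge $e'$ is transmitted through the mixed gradient $\nabla\nabla G_L$ of the $L$-periodic Green function of $-\nabla^*\cdot\aa_L\nabla$: schematically $\partial_{a(e')}(\xi+\nabla\phi_L)(e)=-(\xi+\nabla\phi_L)(e')\,[\nabla\nabla G_L(x_e,x_{e'})]$, so that (integrating in the resampled variable, and using $|D^2_{e,e'}A_L|\lesssim L^{-d}$ from the first-derivative bound when $|x_e-x_{e'}|\lesssim1$)
\[
|D^2_{e,e'}A_L|\;\lesssim\;L^{-d}\,\big(1+|\nabla\phi_L|+|\nabla\phi_L^{e'}|\big)(e)\,\big(1+|\nabla\phi_L|+|\nabla\phi_L^{e'}|\big)(e')\,|\nabla\nabla G_L(x_e,x_{e'})|.
\]
Combining H\"older's inequality, the corrector moment bounds above, and the annealed Green-function estimates of \cite{Marahrens-Otto-13} transposed to the torus uniformly in $L$, namely $\E{|\nabla\nabla G_L(x,y)|^q}^{1/q}\lesssim_q(1+|x-y|)^{-d}$, one obtains $\E{|D^2_{e,e'}A_L|^q}^{1/q}\lesssim_q L^{-d}(1+|x_e-x_{e'}|)^{-d}$ for every $q\ge1$, hence $\E{(D^2_{e,e''}A_L)^2(D^2_{e',e''}A_L)^2}^{1/2}\lesssim L^{-2d}(1+|x_e-x_{e''}|)^{-d}(1+|x_{e'}-x_{e''}|)^{-d}$.

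It then remains to collect the estimates. The Berry--Esseen term is $\lesssim\var{A_L}^{-3/2}L^{-2d}\lesssim L^{-d/2}$. In the second-order term, using $\sum_{x\in\T_L}(1+|x|)^{-d}\lesssim\log L$ one finds $\mathcal S\lesssim L^{-4d}\sum_{e''}\big(\sum_e(1+|x_e-x_{e''}|)^{-d}\big)^2\lesssim L^{-4d}\cdot L^{d}(\log L)^2=L^{-3d}(\log L)^2$, so this term is $\lesssim\var{A_L}^{-1}L^{-3d/2}\log L\lesssim L^{-d/2}\log L$, and \eqref{eq:normal} follows; the single logarithm is exactly the logarithmic divergence of $\sum_{x\in\T_L}(1+|x|)^{-d}$, i.e.\ of the sum of the mixed-second-derivative decay over the torus. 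The lower bound $\var{A_L}\gtrsim L^{-d}$ used above follows from a local argument: conditioning on $(a(e))_{e\ne e_0}$ for one fixed edge $e_0$ near the origin and exploiting that $a(e_0)\mapsto A_L$ is nondecreasing with derivative $L^{-d}|(\xi+\nabla\phi_L)(e_0)|^2$. The main obstacle is the second-derivative step: one must transmit the annealed gradient--gradient bounds on $G_L$ --- themselves the nontrivial output of \cite{Marahrens-Otto-13}, which additionally has to be carried over to the periodic torus with constants uniform in $L$ --- through the nonlinear, nonlocal dependence $\aa_L\mapsto\nabla\phi_L$, and to do so with enough uniformity in $q$ and $L$ that no loss beyond this single logarithm appears.
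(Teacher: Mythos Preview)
Your proposal is correct and follows essentially the same route as the paper: apply the Lachi\`eze-Rey--Peccati/Chatterjee normal approximation for functionals of product measures, control the first discrete derivative $D_eA_L$ by $L^{-d}(1+|\nabla\phi_L(e)|^2)$ and the second discrete derivative $D^2_{e,e'}A_L$ via the mixed second gradient of the periodic Green function, then feed in the corrector moment bounds of \cite{Gloria-Neukamm-Otto-14}, the annealed estimates of \cite{Marahrens-Otto-13}, and the variance lower bound; the single logarithm arises in both arguments from $\sum_{x\in\T_L}(1+|x|)^{-d}$. The only cosmetic differences are that the paper obtains the first-derivative identity from the corrector equation (your variational two-sided bound gives the same estimate), and that the paper keeps the raw $T,T'$ terms of Theorem~\ref{theo:normalapprox} and applies Efron--Stein to them directly rather than invoking a pre-packaged second-order Poincar\'e form with your $\mathcal S$.
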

As already mentioned, the quantitative estimate of  $\E{A_L}-A_\ho$ is the object of \cite[Proposition~3]{Gloria-Neukamm-Otto-14}, which we recall here for completeness:
\begin{prop}\label{th:systematic}
\begin{equation}\label{eq:systematic}
|\E{A_L}-A_\ho|\,\lesssim \, L^{-d}\log^dL.
\end{equation}
\qed
\end{prop}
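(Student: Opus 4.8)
\medskip

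\noindent Although estimate~\eqref{eq:systematic} is borrowed from \cite{Gloria-Neukamm-Otto-14}, let us indicate the strategy I would follow to prove it. The starting point is an energy identity. Testing the corrector equation \eqref{eq:corr-L} with $\phi_L$ and summing over $\T_L$ removes the term $\nabla\phi_L\cdot\aa_L\nabla\phi_L$, so that $A_L=L^{-d}\sum_{x\in\T_L}\xi\cdot\aa_L(\xi+\nabla\phi_L)(x)$; likewise $A_\ho=\E{\xi\cdot\aa(\xi+\nabla\phi)(0)}$ (the analogous identity on $\Z^d$, justified for $d=2$ by first testing the massive corrector equation and letting the mass vanish, since there $\phi$ only exists through its gradient). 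I would then use the natural coupling $\aa_L(x)=\aa(x)$ for $x\in\T_L$ of \eqref{aLdef2} --- under which the $L$-periodic extension of $\phi_L$ is exactly the whole-space corrector associated with the ($L$-periodic) field $\aa_L$ --- together with the $x$-independence of $\E{\xi\cdot\aa(x)(\xi+\nabla\phi_L)(x)}$ for $x\in\T_L$, to obtain
\begin{equation*}
\E{A_L}-A_\ho\,=\,\E{\xi\cdot\aa(0)\,(\nabla\phi_L-\nabla\phi)(0)} .
\end{equation*}
Since $\E{\nabla\phi(0)}=0$ and $\E{\nabla\phi_L(0)}=L^{-d}\,\E{\sum_{x\in\T_L}\nabla\phi_L(x)}=0$, the right-hand side is a sum of $d$ covariances $\cov{a(0,\ee_i)}{[(\nabla\phi_L-\nabla\phi)(0)]_i}$.

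\medskip

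\noindent The second step applies the product-structure covariance estimate of Lemma~\ref{lem:covar}. Since $a(0,\ee_i)$ is a bounded function of the \emph{single} conductance on the edge $(0,\ee_i)$, this bounds each of the $d$ covariances by $\E{\oscc{(0,\ee_i)}{[(\nabla\phi_L-\nabla\phi)(0)]_i}}^{1/2}$, that is, by the $L^2$-averaged sensitivity of $\nabla(\phi_L-\phi)(0)$ with respect to the conductance at the origin. Everything thus reduces to the differentiated stability estimate
\begin{equation*}
\E{\oscc{(0,\ee_i)}{[(\nabla\phi_L-\nabla\phi)(0)]_i}}^{1/2}\,\lesssim\,L^{-d}\log^d L .
\end{equation*}
The point is that perturbing the conductance at $(0,\ee_i)$ affects $\nabla\phi_L(0)$ and $\nabla\phi(0)$ through the \emph{same} local mechanism --- a mixed second gradient of the Green function of $-\nabla^*\cdot\aa_L\nabla$, respectively $-\nabla^*\cdot\aa\nabla$, evaluated near the origin and dressed by $\xi+\nabla\phi_L$, respectively $\xi+\nabla\phi$ --- so that the two mechanisms differ only through the replacement of the whole-space Green function by its periodization and through a self-consistent remainder. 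The periodization defect I would handle by the method of images: the difference of the two kernels at the origin is of the form $\sum_{z\in\Z^d\setminus\{0\}}\nabla\nabla G(0,Lz)+\cdots$, and since $\nabla\nabla G(0,y)$ decays like $|y|^{-d}$ with a mean-zero angular profile, the shell sums telescope and the defect is $\lesssim L^{-d}$ up to logarithmic corrections. The remainder would be absorbed using the annealed Green-function bounds of \cite{Marahrens-Otto-13} together with the moment bounds on $\nabla\phi$ and $\nabla\phi_L$ of \cite{Gloria-Neukamm-Otto-14}, which keep the dressing by $\xi+\nabla\phi$ harmless and make the self-consistency loop close.

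\medskip

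\noindent The hard part is precisely this differentiated comparison: turning the heuristic ``$\nabla\phi_L$ and $\nabla\phi$ react to each individual conductance in nearly the same way, up to an $O(L^{-d})$ periodization defect'' into a rigorous bound requires a quantitative stability/regularity theory for the discrete corrector equation --- in order to propagate the discrepancy $\aa_L-\aa$, which is supported outside $\T_L$ and hence at distance $\gtrsim L$ from the origin, down to a pointwise statement at $0$ --- and it relies on the div--curl cancellation in the Green-function sums above. The bookkeeping is most delicate for $d=2,3,4$, where these sums are only logarithmically convergent and (for $d=2$) correctors exist solely through their gradients; this is the source of the $\log^d L$ factor in \eqref{eq:systematic}, whose exact power there is no need to optimize, any fixed power of $\log L$ being harmless in Theorem~\ref{th:main}. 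An essentially equivalent route keeps a massive (zeroth-order-regularized) corrector $\phi_T$ with $T\sim L^2$ as an intermediate object and splits $\nabla\phi_L-\nabla\phi$ into a periodization error of $\phi_T$, which is exponentially small on the scale $\sqrt T$, and two massive-versus-exact comparisons; it makes the localization scale explicit but meets the same core difficulty, namely that the soft massive cutoff leaks into the low-frequency range and must be corrected by the same kind of decorrelation estimate.
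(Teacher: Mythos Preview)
The paper does not supply its own proof of this proposition; it simply quotes \cite[Proposition~3]{Gloria-Neukamm-Otto-14}, as your opening sentence correctly acknowledges. So the relevant comparison is between your sketch and the argument actually carried out in \cite{Gloria-Neukamm-Otto-14}, which is substantially different: there the systematic error is controlled via the spectral-gap/semigroup machinery (Glauber dynamics on the environment, decomposition of the corrector through the semigroup, and the optimal bound on the spectral exponents that also underlies Lemma~\ref{lem:moment-2} here), not by a direct pointwise comparison of $\nabla\phi_L$ and $\nabla\phi$.

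Your reduction up to and including the identity $\E{A_L}-A_\ho=\sum_i\cov{a(0,\ee_i)}{\nabla_i(\phi_L-\phi)(0)}$ is correct and clean. The gap is in the next step. After applying Lemma~\ref{lem:covar}, the sensitivity you must bound contains, via formulas of type \eqref{eq:prop7-1.2}, a term of the form $\nabla_i\nabla_i G(0,0)\cdot\big(\nabla_i\phi_L(0)-\nabla_i\phi(0)\big)$. Since $|\nabla\nabla G(0,0)|$ is merely $O(1)$, this forces you to control $\E{|\nabla\phi_L(0)-\nabla\phi(0)|^2}^{1/2}$ at the target precision $L^{-d}\log^d L$. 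But that is essentially the hard input you are trying to reach, and the naive Green-representation of $\nabla(\phi-\phi_L)(0)$ produces a sum $\sum_{|y|\gtrsim L}|y|^{-d}$ that diverges logarithmically unless a massive term is inserted to restore exponential decay---exactly the phenomenon made explicit in \eqref{eq:estim-I1-4} of the proof of Proposition~\ref{prop:var-resc-mu}. Your ``method of images with mean-zero angular profile'' argument for the other piece $\nabla\nabla G_L(0,0)-\nabla\nabla G(0,0)$ is likewise problematic: the images formula for $G_L$ holds for the operator with \emph{periodic} coefficients $\aa_L$, not for $\aa$, and for a random operator the mixed second gradient of the Green function carries no angular cancellation---only the annealed $|y|^{-d}$ decay of \eqref{eq:annealed-nabla2G}, which again yields a borderline-divergent lattice sum over $L\Z^d\setminus\{0\}$. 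In short, the covariance reduction buys you locality in the \emph{edge} variable but not the extra order of smallness in $L$; as written the approach would stall at $L^{-d/2}$. The massive-regularization route you mention at the end is not ``essentially equivalent'' to your main line---it is precisely the mechanism that makes these sums converge, and it is much closer to what \cite{Gloria-Neukamm-Otto-14} actually does.
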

We now turn to the main achievement of this article: the definition of $\sigma$ and the estimate of $L^d \var{A_L}-\sigma^2$:
\begin{prop}\label{th:resc-var}
Let $\sigma_L^2:=L^d \var{A_L}$. There exists $\sigma>0$ such that for all $L\in \N$ we have
\begin{equation}\label{eq:resc-var}
|\sigma-\sigma_L| \,\lesssim \, L^{-\frac{d}{2}}\log^d L.
\end{equation}
\qed
\end{prop}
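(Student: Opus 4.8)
The plan is to show that $\sigma_L$ is a Cauchy sequence in $L$ and to identify its limit $\sigma$ through a representation of the variance adapted to the product structure. The natural starting point is the Efron–Stein / martingale decomposition: since $\Pm = \pi^{\otimes \B_L}$, for each edge $e \in \B_L$ let $\partial_e A_L$ denote a discrete (vertical) derivative, e.g. $\partial_e A_L := A_L - \E{A_L \mid \calF_{\B_L \setminus \{e\}}}$, or equivalently the oscillation with respect to the conductance $a(e)$. The Efron–Stein inequality then gives both an upper bound $\sigma_L^2 = L^d \var{A_L} \le L^d \sum_{e \in \B_L} \E{(\partial_e A_L)^2}$ and, more importantly for a precise asymptotic, one can write $\var{A_L}$ exactly via a sum of conditional variances along an ordering of the edges. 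By stationarity of $\aa_L$ on the torus, all $L^d \cdot d$ edges play symmetric roles, so $\sigma_L^2$ essentially reduces to $d$ times $L^d \E{(\partial_{e_0} A_L)^2}$ for a reference edge $e_0$ near the origin (up to the usual martingale cross-terms, which are controlled by the same single-edge quantities). The key algebraic input is an explicit formula for $\partial_e A_L$: differentiating \eqref{eq:hom-coeff-per} and \eqref{eq:corr-L} with respect to a single conductance, one finds that $\partial_e A_L$ is, to leading order, $(\xi + \nabla\phi_L)\cdot(\partial_e \aa_L)(\xi + \nabla\phi_L)$ evaluated at the edge $e$ — because the corrector equation is the Euler–Lagrange equation for the energy, the first-order correction coming from $\partial_e \nabla\phi_L$ vanishes at leading order. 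This is the same mechanism that underlies the Efron–Stein estimate \eqref{eq:scaling-GNO}.

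Next I would compare the torus quantity to a whole-space quantity. Introduce the analogous object on $\Z^d$: the "vertical derivative" $\partial_e \bigl( (\xi+\nabla\phi)\cdot\aa(\xi+\nabla\phi) \bigr)$, and define $\sigma^2$ as $d$ times the corresponding infinite-volume single-edge variance sum, i.e. essentially $\sigma^2 := \sum_{e} \cov{\,\cdots\,}{\cdots}$ over the whole lattice, which converges because correlations decay. Concretely, $\sigma^2$ is the limit of $L^d\,\mathrm{var}$ of a spatial average of the stationary field $(\xi+\nabla\phi)\cdot\aa(\xi+\nabla\phi)$, and one must check this is strictly positive using nontriviality of $\pi$ (the $d\ge 2$ hypothesis and $\mathbb P$ nontrivial guarantee $\sigma>0$: the single-edge derivative is not identically zero, and the covariance sum cannot vanish by a positivity argument on the "corrector-weighted" field). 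Then $|\sigma_L - \sigma|$ reduces to controlling two differences: (a) replacing $\phi_L$ by $\phi$ inside the single-edge and two-edge expectations, and (b) replacing the finite torus sum by the infinite lattice sum (the "tail"). For (a), one uses quantitative estimates on $\nabla\phi_L - \nabla\phi$ — these follow from the results invoked in the excerpt for Proposition~\ref{th:normal}, namely the moment bounds on $\nabla\phi_L$ from \cite{Gloria-Neukamm-Otto-14} and the annealed Green function estimates of \cite{Marahrens-Otto-13}, which yield that $\nabla\phi_L$ and $\nabla\phi$ agree up to an error of order $L^{-d/2}$ (times logarithms) in the relevant norms, with exponentially small corrections away from the boundary region. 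For (b), the correlation decay of the field $(\xi+\nabla\phi)\cdot\aa(\xi+\nabla\phi)$ — again a consequence of the Green function decay — gives that the tail of the covariance sum beyond scale $L$ is $O(L^{-d}\log^d L)$, possibly after also accounting for the difference between torus distance and lattice distance.

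The main obstacle is step (b) combined with the nonlocality of the corrector: unlike a genuine spatial average, $A_L$ is a nonlinear nonlocal functional of $\aa_L$, so the "single-edge derivative" $\partial_e A_L$ is not supported near $e$ — it inherits a tail from $\nabla\phi_L$. Quantifying the two-edge covariances $\E{\partial_e A_L\,\partial_{e'} A_L}$ and showing they match the infinite-volume covariances up to $L^{-d}\log^d L$ requires the annealed/quantitative Green function bounds to be pushed through the product $(\xi+\nabla\phi_L)\cdot(\partial_e\aa_L)(\xi+\nabla\phi_L)$ and its $e'$-counterpart, including the second-order terms $\partial_e\nabla\phi_L$ that I dismissed as lower-order above but which do contribute to the variance at the same scaling and must be tracked carefully. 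The logarithmic factor $\log^d L$ in \eqref{eq:resc-var} is exactly the signature of these Green-function estimates in dimension $d$ (the borderline $d=2$ case being the most delicate), and matching it is where the bulk of the technical work lies; the rest is bookkeeping with the Efron–Stein/martingale identity and stationarity.
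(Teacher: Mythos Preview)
Your plan has a genuine gap at its core: you assume that the infinite-volume covariance sum
\[
\sigma^2 \;=\; \sum_{x\in\Z^d}\cov{(\xi+\nabla\phi)\cdot\aa(\xi+\nabla\phi)(x)}{(\xi+\nabla\phi)\cdot\aa(\xi+\nabla\phi)(0)}
\]
converges, and that the tail beyond scale $L$ is $O(L^{-d}\log^d L)$. But the best available decay on each covariance term is $\sim (1+|x|)^{-d}$ (coming from the mixed second gradient of the Green function), which is \emph{borderline non-summable} on $\Z^d$: $\sum_{|x|>L}(1+|x|)^{-d}$ diverges logarithmically, not decays. The paper states this explicitly: ``this sum displays cancellations which cannot be unravelled by using the triangle inequality.'' So step~(b) of your plan fails, and without it you cannot even define~$\sigma$ directly, let alone estimate $|\sigma_L-\sigma|$. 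The same issue undermines step~(a): there is no direct bound of the form $\E{|\nabla\phi_L-\nabla\phi|^4}^{1/4}\lesssim L^{-d/2}$ available.

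The paper circumvents this by a \emph{massive regularization}: it introduces the approximate corrector $\phi_{L,\mu}$ solving $\mu\phi_{L,\mu}-\nabla^*\cdot\aa_L(\xi+\nabla\phi_{L,\mu})=0$, and its whole-space analogue $\phi_\mu$. For $\mu>0$ the Green function decays exponentially, so the regularized covariance sum $\sigma_{k,\mu}^2$ \emph{is} absolutely convergent, and $|\sigma_{L,k,\mu}^2-\sigma_{k,\mu}^2|\lesssim L^d e^{-c\sqrt{\mu}L}$ (Proposition~\ref{prop:var-resc-mu}). The price is a systematic error $|\sigma_L^2-\sigma_{L,k,\mu}^2|$ from the mass; this is controlled (Proposition~\ref{prop:LtoLmu}) via the fourth moment $\E{|\nabla\phi_L-\nabla\phi_{L,k,\mu}|^4}$, which in turn requires spectral theory for the second moment plus a logarithmic Sobolev inequality to upgrade integrability. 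In dimensions $d>4$ the naive massive approximation saturates and is too crude to beat the CLT scaling, so the paper uses Richardson extrapolation in $\mu$ (the index $k$) to enhance the rate. Finally, one shows $\sigma_L^2$ is Cauchy by triangulating through $\sigma_{k,\mu}^2$ and optimizing $\mu\sim(L^{-1}\log L)^2$; the limit $\sigma$ is obtained indirectly, and its positivity comes from the separate lower bound of Proposition~\ref{prop:lower-bound}. Your Efron--Stein starting point is natural, but without the massive localization step there is no way to make the infinite-volume comparison quantitative.
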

This result is proved in Section~\ref{sec:variance}. Its proof essentially builds upon ideas and results of \cite{Gloria-Otto-09,Gloria-Mourrat-10,Gloria-Neukamm-Otto-14,Marahrens-Otto-13,Wehr-Aiz-90,Nolen-14}.  Their combination is however subtle. The general strategy is as follows.
The only important probabilistic ingredient is a covariance estimate (in the spirit of the Efron-Stein inequality, see Lemma~\ref{lem:covar}). We shall also take advantage of a logarithmic-Sobolev inequality in the form obtained 
 in \cite{Marahrens-Otto-13} (see Lemma~\ref{lem:LSI}), but in view of our recent contributions \cite{Gloria-Otto-14,Gloria-Neukamm-Otto-14b} this is less essential (though it is convenient and holds true for product measures, as considered here).
As opposed to the error term $A_\ho-\expec{A_L}$ analyzed in \cite{Gloria-Neukamm-Otto-14} for which the limit $A_\ho$ is well-defined, the estimate of $\sigma^2-L^d\var{A_L}$ first requires us to define the asymptotic standard deviation $\sigma$. Although we are not now able to do this by a direct approach ($\sigma$ is formally given by a series which is not absolutely convergent), we shall argue that $L^d\var{A_L}$ is a Cauchy sequence. In order to compare $L^d\var{A_L}$ to ${L'}^d\var{A_{L'}}$ for two values $L'\ge L$, we shall localize the dependence of the correctors with respect to $\aa_L$ and $\aa_{L'}$ to some region of size not exceeding $L$ (therefore independent of $L'$) by adding a massive term to the corrector equation of magnitude $\mu$ (which we think to be of order $L^{-2}$), giving rise to an approximate corrector $\phi_{L,\mu}$, cf. \eqref{eq:corrector-eq-modif-per}. We are thus left with two contributions to estimate $L^d\var{A_L}-{L'}^d\var{A_{L'}}$: the error due to boundary conditions (whose influence is tamed by the massive term) and a systematic error due to the modification of the corrector equation by the massive term.
The latter is the most subtle error term. It involves the fourth moment $\expec{|\nabla \phi_{L,\mu}-\nabla \phi_L|^4}$. 
On the one hand, relying on the optimal bounds of the spectrum of $\mathcal L$ projected on the local drift
obtained in \cite{Gloria-Neukamm-Otto-14}, we shall estimate the second moment $\expec{|\nabla \phi_{L,\mu}-\nabla \phi_L|^2}$ by spectral theory. On the other hand, we shall upgrade the estimate of the second moment to the fourth moment by using the logarithmic-Sobolev inequality. To make this strategy work we rely on the sensitivity calculus introduced in \cite{Gloria-Otto-09} (which essentially measures how much the solution of a PDE depends on changes of the coefficients)
and on the optimal annealed estimates of the Green functions obtained in \cite{Marahrens-Otto-13}.
Since the CLT scaling naturally improves with dimension, our approximation $\phi_{L,\mu}$ of $\phi_L$ will not be precise enough in high dimensions. This will be solved by using higher order approximations obtained by Richardson extrapolation (with respect to $\mu$), as introduced in \cite{Gloria-Mourrat-10}.

\medskip

Theorem~\ref{th:main} is then a direct combination of Propositions~\ref{th:normal}---\ref{th:resc-var}.
Up to logarithmic corrections, the three terms \eqref{eq:normal}, \eqref{eq:systematic}, and \eqref{eq:resc-var} yield the same contribution to \eqref{eq:th-main}. If instead of $A_L$ we simply consider the arithmetic average of $a(e)$ over edges $e$ in $\B_L$ and we replace $A_\ho$ by $\E{a(e)}$ and $\sigma$ by $\var{a(e)}^{\frac{1}{2}}$, then  \eqref{eq:th-main} is standard and holds without the correction $\log^dL$. However, we believe that this logarithmic correction is optimal for $A_L$.

\medskip

Let us conclude this introduction by mentioning a couple of recent results towards Theorem~\ref{th:main},
besides the works \cite{Nolen-11, Nolen-14} by the second author in the continuum setting. In \cite{Biskup-Salvi-Wolff-14}, Biskup, Salvi, and Wolff proved that for $\lambda$ close enough to $1$ (that is, in the perturbative regime of small ellipticity contrast), there exists some asymptotic standard deviation $\sigma>0$ such that the following convergence in law holds:
$$
L^{\frac{d}{2}} (A_{L,dir}-\E{A_{L,dir}}) \to \mathcal G_\sigma,
$$
as $L \to \infty$, where $\mathcal G_\sigma$ is a normal variable with variance $\sigma^2>0$, and $A_{L,dir}$ is the averaged energy on $([0,L)\cap \Z)^d$ of the approximation of the corrector $\phi$ by using homogeneous Dirichlet boundary conditions (instead of the periodic boundary conditions used here). Their proof relies on Meyers' estimates for the bounds on the corrector and on Lindenberg-Feller type conditions for the CLT, so that their result addresses (i) and (iii) in a qualitative way and in a perturbative regime (whereas (ii) is expected to be of order $L^{-1}$ for Dirichlet boundary conditions, that is, larger in general than the fluctuations of order $L^{-\frac{d}{2}}$).
In \cite{Rossignol-12}, Rossignol proves that in the periodic setting the random variable
$$
L^{\frac{d}{2}}\frac{A_L - \expE[A_L]}{\sigma_L} 
$$
converges in law to a standard normal, as $L \to \infty$. This result is a qualitative version of Proposition~\ref{th:normal} (that is, without error estimate). We refer to \cite{Biskup-11} for a recent survey of several other problems related to random conductance models.

\medskip


\section{Asymptotic behavior of the rescaled variance and proof of Proposition~\ref{th:resc-var}}\label{sec:variance}

\subsection{Structure of the proof and auxiliary results}

\noindent From a formal expansion, one expects the following formula to hold for $\sigma^2$:
\begin{equation}\label{eq:formal-sigma}
\sigma^2\,=\,\sum_{z\in \Z^d} \cov{(\xi+\nabla \phi)\cdot \aa(\xi+\nabla \phi)(z)}{(\xi+\nabla \phi)\cdot \aa(\xi+\nabla \phi)(0)},
\end{equation}
where $\nabla \phi(0)\,:=\,\lim_{L\uparrow \infty} \nabla \phi_L(0)$ in $L^2(\Omega)$. Yet, we are not able to prove that this formula makes sense. Indeed, this sum displays cancellations which cannot be unravelled by using the triangle inequality (each term is expected to scale as the mixed second gradient of the elliptic Green function, the sum of which converges but is not absolutely convergent). Instead, we will show that $\sigma_L$ is a Cauchy sequence which has a positive limit $\sigma$ as $L \to \infty$.

To circumvent the difficulty in making sense of \eqref{eq:formal-sigma}, we make use of a regularization of the corrector equation by a massive term of magnitude $\mu>0$, and let $\phi_\mu:\Z^d\times \Omega \to \R$ be the unique bounded solution of 
\begin{equation}\label{eq:corrector-eq-modif}
\mu\phi_\mu(x,\aa)-\nabla^*\cdot \aa(x)(\xi+\phi_\mu(x,\aa))\,=\,0 \quad x \in \Z^d, 
\end{equation}
(such a solution is measurable on $\Omega$ and can be defined by the Green representation formula for all $\aa \in \Omega$ --- and not only $\mathbb P$ almost surely).
Then the formula 
\begin{equation}\label{eq:formal-sigma-mu}
\sigma_\mu^2\,:=\;\sum_{z\in \Z^d} \cov{(\xi+\nabla \phi_\mu)\cdot \aa(\xi+\nabla \phi_\mu)(z)}{(\xi+\nabla \phi_\mu)\cdot \aa(\xi+\nabla \phi_\mu)(0)}
\end{equation}
makes sense (we shall prove that the covariances are summable). 
Likewise, for all $L\in \N$ we consider the unique $[0,L)^d$-periodic solution $\phi_{\mu,L}:\Omega \times \Z^d \to \R$ of
\begin{equation}\label{eq:corrector-eq-modif-per}
\mu\phi_{\mu,L}(x,\aa)-\nabla^*\cdot \aa_L(x,\aa) (\xi+\nabla\phi_{\mu,L}(x,\aa))\,=\,0\quad \text{in }\Z^d, 
\end{equation}
where $\aa_L(\cdot,\aa)$ is the periodic extension of $\aa$ defined at (\ref{aLdef2}).

In a second step we compare $\nabla \phi_{\mu,L}$ to $\nabla \phi_L$ and $\nabla\phi_{\mu,L}$ to $\nabla\phi_\mu$, and then optimize $\mu$ with respect to $L$ to show that $\sigma_L$ is Cauchy as $L \to \infty$. The fact that the limit is positive is a consequence of the following proposition.
\begin{prop}\label{prop:lower-bound}
If $\mathbb P$ is a nontrivial product measure, then there exists $\underline{\sigma}>0$ such that 
for all $L\in \N$,
$$
\var{A_L}\,\geq \, \underline{\sigma}L^{-d},
$$
so that $\sigma_L=\Big(L^d\var{A_L}\Big)^{\frac{1}{2}}\ge \underline\sigma$.
\qed
\end{prop}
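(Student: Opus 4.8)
The plan is to exhibit a lower bound on $\var{A_L}$ by isolating the effect of a single conductance and applying a Poincar\'e-type (spectral gap / Efron--Stein) inequality from below on a well-chosen "test" edge. Concretely, by the Efron--Stein structure (Lemma~\ref{lem:covar}), one has the decomposition $\var{A_L} = \sum_{e \in \B_L} \expec{ \text{(something)}}$ of contributions of individual edges; but since we want a \emph{lower} bound, the cleaner route is to use the elementary identity $\var{A_L} \ge \expec{ \var{A_L \mid \calF_{e}^\perp}}$ for any single edge $e$, where $\calF_e^\perp$ is the $\sigma$-algebra generated by all conductances \emph{except} $a(e)$; equivalently $\var{A_L}\ge \tfrac12 \expec{ (A_L - A_L^e)^2}$ where $A_L^e$ is the effective conductance with the conductance on $e$ resampled by an independent copy $a'(e)$. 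Since $\Pm$ is a nontrivial product measure, there exist two values in the support of $\pi$, say $\lambda \le \alpha < \beta \le 1$, which are attained with positive probability on any fixed edge (or more robustly: $a(e)$ has positive variance, so after conditioning on $\calF_e^\perp$ with positive probability the value $a(e)$ fluctuates in an interval of positive length). Therefore it suffices to show that, for a fixed edge $e_0$ and \emph{any} fixed realization of the other conductances in $[\lambda,1]^{\B_L \setminus \{e_0\}}$, the map $t \mapsto A_L(t)$ — the effective conductance as a function of the value $t = a(e_0) \in [\lambda,1]$ — has derivative bounded below in absolute value by a strictly positive constant depending only on $d,\lambda$, uniformly in $L$ and in the other conductances.

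The key computational step is the differentiation formula for $A_L$ with respect to a single conductance. Writing $u(x) = \xi\cdot x + \phi_L(x)$ for the corrector-modified coordinate, one has the Euler--Lagrange / minimality characterization $A_L = L^{-d}\sum_{x\in\T_L}\nabla u \cdot \aa_L \nabla u$, and by the envelope theorem (the corrector is the minimizer of the Dirichlet energy), $\frac{d}{dt} A_L = L^{-d}\, \partial_t\!\big[\text{energy density on } e_0\big] = L^{-d}\, \big(\nabla u(e_0)\big)^2$, i.e. the derivative equals $L^{-d}$ times the square of the discrete gradient of $u$ across the edge $e_0$. (Here I use that only the $a(e_0)$-term of $\aa_L$ depends on $t$ and the minimality kills the variation of $u$.) Thus $\frac{d}{dt}A_L = L^{-d}(\nabla u(e_0))^2 \ge 0$, so $A_L(t)$ is monotone in $t$, and
\[
A_L(1) - A_L(\lambda) \;=\; \int_\lambda^1 L^{-d} (\nabla u_t(e_0))^2 \, dt.
\]
To get a lower bound we need $\nabla u_t(e_0)$ bounded below. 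This is where I would argue: if $\nabla u_t(e_0) = 0$ for some $t$, then the flux $q = \aa_L \nabla u_t$ across $e_0$ vanishes; but summing the corrector equation appropriately (testing $-\nabla^*\cdot \aa_L \nabla u_t = 0$ against $u_t$ itself gives $A_L \ge$ harmonic-mean-type lower bound, and testing against linear functions forces the average flux to be $\xi$), one sees that $\nabla u_t$ cannot vanish on a single edge while the global average flux is the fixed nonzero vector $\xi$ — more precisely, a compactness/uniform-ellipticity argument (Caccioppoli plus the fact that the average of $\nabla u_t$ over $\T_L$ is exactly $\xi \ne 0$) yields a constant $c(d,\lambda)>0$ with $\int_\lambda^1 (\nabla u_t(e_0))^2\,dt \ge c$ for every configuration. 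Averaging over the other conductances and using the nontriviality of $\pi$ then gives $\var{A_L} \ge \tfrac12 \expec{(A_L(1)-A_L(\lambda))^2}\,\cdot(\text{const}) \gtrsim L^{-2d}\cdot$(const) — which is \emph{too weak}.

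To fix the scaling I would instead sum over \emph{many} edges. The correct argument: by Efron--Stein from below, $\var{A_L} \ge \sum_{e\in\B_L} c_e$ where $c_e$ is (half) the expected squared increment under resampling $a(e)$; by the above, each $c_e \gtrsim L^{-2d}\,\expec{(\nabla u(e))^2(\nabla u'(e))^2}$ or, after a lower-bound-friendly reformulation, $c_e \gtrsim L^{-2d} \expec{(\nabla u(e))^4}$ up to the nontriviality constant; summing over the $\Theta(L^d)$ edges and using $\sum_e (\nabla u(e))^2 = L^d A_L \ge L^d \lambda |\xi|^2 = \lambda L^d$ together with Jensen ($\sum_e (\nabla u(e))^4 \ge L^{-d}(\sum_e(\nabla u(e))^2)^2 \ge \lambda^2 L^d$) gives $\var{A_L} \gtrsim L^{-2d}\cdot L^d = L^{-d}$, which is exactly the claim, with $\underline\sigma$ depending only on $d$, $\lambda$, and the modulus of nontriviality of $\pi$ (e.g. $\var\pi$ or $\mathrm{diam}\,\mathrm{supp}\,\pi$). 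The positivity of $\sigma = \lim_L \sigma_L$ then follows immediately since $\sigma_L^2 = L^d\var{A_L} \ge \underline\sigma$ for all $L$.

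The main obstacle I anticipate is making the "$\nabla u_t(e_0)$ is bounded below, on average in $t$, uniformly in $L$ and in the frozen environment" step fully rigorous: one must rule out degenerate configurations of the other conductances that could make the corrector gradient tiny on the chosen edge for all admissible $t$. The robust way around this is precisely the Jensen/averaging trick in the last paragraph — rather than controlling a \emph{single} edge, one controls the \emph{sum} of fourth powers of corrector gradients, which is pinned from below by the \emph{first} moment $\sum_e(\nabla u(e))^2 \ge \lambda L^d$ coming from uniform ellipticity (this lower bound on $A_L$ is trivial: $A_L = L^{-d}\sum \nabla u\cdot\aa_L\nabla u \ge \lambda L^{-d}\sum|\nabla u|^2 \ge \lambda|\xi|^2$ after using $\med_{\T_L}\nabla u = \xi$ and Jensen). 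This sidesteps any case analysis on the environment and is uniform in $L$, which is what the proposition requires.
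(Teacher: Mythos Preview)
There is a genuine gap: the ``Efron--Stein from below'' inequality you invoke does not exist. For a single edge you correctly observe $\var{A_L}\ge \expec{\var{A_L\mid\calF_e^\perp}}=\tfrac12\expec{(A_L-A_L^e)^2}$, but this cannot be summed over $e\in\B_L$: the Efron--Stein inequality $\var{X}\le\tfrac12\sum_e\expec{(X-X^e)^2}$ goes the other way. What \emph{can} be summed is the orthogonality-based lower bound $\var{A_L}\ge\sum_e\var{\expE[A_L\mid a(e)]}$ (this is \eqref{varlower2} in the paper, a consequence of the Hoeffding/ANOVA decomposition for independent coordinates), and that is a different quantity: $\var{\expE[A_L\mid a(e)]}$ involves the conditional \emph{expectation} given $a(e)$ alone, not the conditional variance given its complement.

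Even after switching to the correct summable lower bound, your argument has a second gap. With $g_e(\alpha)=\expE[A_L\mid a(e)=\alpha]$ and $g_e'(\alpha)=L^{-d}\expE[(\nabla u(e))^2\mid a(e)=\alpha]\ge0$, you need a lower bound on $\var{g_e(a(e))}$. Your ``lower-bound-friendly reformulation'' $c_e\gtrsim L^{-2d}\expec{(\nabla u(e))^4}$ implicitly assumes $(\nabla u'(e))^2\gtrsim(\nabla u(e))^2$ after resampling, i.e.\ that the squared corrector gradient on edge $e$ is comparable for all values of $a(e)\in[\lambda,1]$. This is exactly the content of the key lemma the paper proves (the Harnack-type bound $g_e'(\alpha)\ge c\,g_e'(\beta)$ for all $\alpha,\beta\in[\lambda,1]$, obtained from an energy estimate for the difference of correctors when one edge is modified). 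Without this comparison, $g_e'$ could in principle be concentrated near one endpoint of $[\lambda,1]$, and neither $\var{g_e}$ nor $\tfrac12\expec{(A_L-A_L^e)^2}$ would be controlled from below by $\expec{(\nabla u(e))^2}$ or $\expec{(\nabla u(e))^4}$. Your Jensen trick $\sum_e(\nabla u(e))^4\ge(dL^d)^{-1}\big(\sum_e(\nabla u(e))^2\big)^2\ge d^{-1}\lambda^2L^d$ is correct and pleasant, but it only becomes usable \emph{after} the comparison lemma supplies the missing link between per-edge variance contributions and $(\nabla u(e))^4$ (or, as the paper does more directly, between $\inf_\alpha g_e'(\alpha)$ and $\expE[(\nabla u(e))^2]$).
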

Although we do not prove that the limit $\sigma$ of $\sigma_L$ coincides with \eqref{eq:formal-sigma}, this optimization in $\mu$ is a way to take implicitly into account the cancellations in \eqref{eq:formal-sigma}.

From a technical point of view, it is not enough to replace $\nabla \phi_L$ by $\nabla \phi_{L,\mu}$ in large dimensions since the scaling of the CLT improves wrt the dimension whereas $\expec{|\nabla  \phi_L-\nabla \phi_{L,\mu}|^2}$ has a limited precision (which turns out to be $\mu^{d}$ up to dimension 4, where it saturates at $\mu^4$). To enhance the convergence rate and make the approximation error smaller than the CLT scaling, we shall use Richardson extrapolation.
As opposed to \cite{Gloria-Neukamm-Otto-14}, we use the Richardson extrapolation at the level of the corrector rather than at the level of the homogenized coefficients.
\begin{defi}[Richardson extrapolations]\label{def:Richardson}
For all $\mu>0$ and $L\in \N$, the sequences $\{\phi_{k,\mu}\}_{k\in \N}, \{\phi_{L,k,\mu}\}_{k\in \N}:\Omega \times \Z^d\to \R$ are defined by $\phi_{1,\mu}:=\phi_\mu$, $\phi_{L,1,\mu}:=\phi_{L,\mu}$,
 and by the general induction formula
\begin{equation}\label{eq:Richardson1}
\phi_{(L,)k+1,\mu}\,:=\,\frac{1}{2^k-1}(2^k \phi_{(L,)k,\frac{\mu}{2}}-\phi_{(L,)k,\mu}).
\end{equation}
We then set
\begin{eqnarray*}
A_{k,\mu}&:=& \expec{(\xi+\nabla \phi_{k,\mu})\cdot \aa (\xi+\nabla \phi_{k,\mu})(0)},\\
A_{L,k,\mu}&:=&
L^{-d}\sum_{x \in \T_L} (\xi+\nabla \phi_{L,k,\mu})\cdot \aa_L (\xi+\nabla \phi_{L,k,\mu})(x),
\end{eqnarray*}
\qed
\end{defi}
Proposition~\ref{th:resc-var} is then a consequence of 
 Proposition \ref{prop:lower-bound} and of the following two propositions.
\begin{prop}\label{prop:LtoLmu}
For all $\mu>0$ and all $k,L\in \N$, set
\begin{equation}\label{eq:def-sig-Lkmu}
\sigma_{L,k,\mu}^2\,:=\,L^d \var{A_{L,k,\mu}}.
\end{equation}
Then for all $k > \frac{d}{4}$ we have
\begin{equation}\label{eq:LtoLmu}
|\sigma^2_L-\sigma^2_{L,k,\mu}|\,\lesssim \, (\mu L)^{\frac{d}{2}}+ (\mu L)^d.
\end{equation}
\qed
\end{prop}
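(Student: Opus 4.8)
The plan is to estimate the difference $\sigma_L^2-\sigma_{L,k,\mu}^2$ by comparing the averaged energy densities $A_L$ and $A_{L,k,\mu}$ on the torus, and to exploit the fact that both are functions of the finitely many variables $\{a(e)\}_{e\in\B_L}$, so that the variance difference can be controlled through the covariance (Efron--Stein type) estimate of Lemma~\ref{lem:covar}. Writing $\sigma_L^2-\sigma_{L,k,\mu}^2 = L^d(\var{A_L}-\var{A_{L,k,\mu}}) = L^d\,\E{(A_L-A_{L,k,\mu})(A_L+A_{L,k,\mu}-\E{A_L}-\E{A_{L,k,\mu}})}$, one reduces by Cauchy--Schwarz to controlling $L^d\,\expec{|A_L-A_{L,k,\mu}|^2}^{1/2}$ times $\var{A_L}^{1/2}+\var{A_{L,k,\mu}}^{1/2}\lesssim L^{-d/2}$ (using \eqref{eq:scaling-GNO} and the analogous bound for the massive approximation). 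Hence it suffices to show $\expec{|A_L-A_{L,k,\mu}|^2}\lesssim L^{-d}\big((\mu L)^d + (\mu L)^{2d}\big)$, i.e. an $L^2(\Omega)$-bound on $A_L-A_{L,k,\mu}$ of order $L^{-d/2}\big((\mu L)^{d/2}+(\mu L)^{d}\big)$.

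The next step is an energy identity for $A_L-A_{L,k,\mu}$. Since $\phi_L$ minimizes $x\mapsto L^{-d}\sum_{\T_L}(\xi+\nabla\psi)\cdot\aa_L(\xi+\nabla\psi)$ over $L$-periodic $\psi$, one has the exact identity $A_L-A_{L,k,\mu} = -L^{-d}\sum_{\T_L}(\nabla\phi_{L,k,\mu}-\nabla\phi_L)\cdot\aa_L(\nabla\phi_{L,k,\mu}-\nabla\phi_L)$ plus a term linear in $\nabla\phi_{L,k,\mu}-\nabla\phi_L$ that, after testing the corrector equation, is itself quadratic in the difference (this is the standard "energy representation of the homogenization error"). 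So $|A_L-A_{L,k,\mu}|\lesssim L^{-d}\sum_{\T_L}|\nabla\phi_{L,k,\mu}-\nabla\phi_L|^2$, and by stationarity $\expec{|A_L-A_{L,k,\mu}|^2}\lesssim \expec{|\nabla\phi_{L,k,\mu}-\nabla\phi_L|^4}$. Thus the task becomes a fourth-moment bound $\expec{|\nabla\phi_{L,k,\mu}(0)-\nabla\phi_L(0)|^4}^{1/4}\lesssim L^{-d/4}\big((\mu L)^{d/4}+(\mu L)^{d/2}\big)$, uniformly in $\mu\le L^{-2}$, say, and for $k>d/4$.

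The heart of the matter is therefore this fourth-moment estimate on $\nabla\phi_{L,k,\mu}-\nabla\phi_L$, and this is where I expect the main obstacle. The strategy, as outlined in the paper's introduction to Section~\ref{sec:variance}, is two-tiered. First one controls the \emph{second} moment: expanding $\phi_{L,k,\mu}$ via the Richardson formula \eqref{eq:Richardson1} as a fixed linear combination of the $\phi_{L,\frac{\mu}{2^j}}$, one reduces to $\expec{|\nabla\phi_{L,\mu}-\nabla\phi_L|^2}$ and its iterates in $\mu$; using the spectral resolution of $\mathcal L$ on the torus (equivalently, the optimal bounds on the bottom of the spectrum projected on the local drift $\ee$, from \cite{Gloria-Neukamm-Otto-14}), one gets $\expec{|\nabla\phi_{L,k,\mu}-\nabla\phi_L|^2}\lesssim \mu^{2k}\wedge$ (a dimension-dependent saturation), which after the Richardson cancellation of the lowest-order terms yields the $(\mu L)^{d}$ scaling in the regime $k>d/4$. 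Second, one \emph{upgrades} $L^2$ to $L^4$ using the logarithmic-Sobolev inequality of Lemma~\ref{lem:LSI}: writing $\expec{|X|^4}\lesssim \expec{|X|^2}^2 + \expec{\big(\sum_e (\mathrm{osc}_e X)^2\big)^2}$ for $X=\nabla\phi_{L,k,\mu}(0)-\nabla\phi_L(0)$, the vertical (oscillation) derivatives $\mathrm{osc}_e X$ are estimated through the sensitivity calculus of \cite{Gloria-Otto-09}: perturbing one conductance $a(e)$ changes $X$ by a convolution of $\nabla\nabla G_\mu$ (massive Green function) against $(\xi+\nabla\phi)$, and the annealed Green-function bounds of \cite{Marahrens-Otto-13} make the resulting sum over $e$ summable with the right weight. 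Assembling these gives the fourth moment with no loss of scaling relative to the second moment, hence \eqref{eq:LtoLmu}. The delicate points will be (a) keeping the Richardson-extrapolation bookkeeping uniform in $L$ while tracking the exact power of $\mu$ that survives the cancellation, and (b) making the log-Sobolev/sensitivity argument quantitatively tight enough that the fourth-moment bound does not degrade the $(\mu L)^{d}$ rate — in particular handling the borderline summability of $\nabla\nabla G$ via the massive regularization, which is precisely why the bound is stated with the two terms $(\mu L)^{d/2}+(\mu L)^d$.
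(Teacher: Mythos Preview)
Your approach is essentially the paper's: the same Cauchy--Schwarz reduction of the variance difference, the same energy identity giving $|A_L-A_{L,k,\mu}|=L^{-d}\sum_{\T_L}\nabla(\phi_L-\phi_{L,k,\mu})\cdot\aa_L\nabla(\phi_L-\phi_{L,k,\mu})$ exactly (no extra linear term survives), hence $\E{(A_L-A_{L,k,\mu})^2}\le\E{|\nabla\phi_L-\nabla\phi_{L,k,\mu}|^4}$ by stationarity, and the same two-tier plan (second moment via the spectral exponents of \cite{Gloria-Neukamm-Otto-14}, upgrade to the fourth moment via the LSI of Lemma~\ref{lem:LSI} and sensitivity/annealed Green bounds).

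Two points of comparison. First, the paper bounds the second factor by $\var{A_L}^{1/2}+\E{(A_L-A_{L,k,\mu})^2}^{1/2}$ rather than by $\var{A_L}^{1/2}+\var{A_{L,k,\mu}}^{1/2}$; this avoids a separate input on $\var{A_{L,k,\mu}}$ and is precisely where the two terms in \eqref{eq:LtoLmu} originate: $L^d\cdot\mu^{d/2}\cdot L^{-d/2}=(\mu L)^{d/2}$ and $L^d\cdot\mu^{d/2}\cdot\mu^{d/2}=(\mu L)^d$. In particular the target for the moment bound is simply $\E{|\nabla\phi_L-\nabla\phi_{L,k,\mu}|^4}^{1/4}\lesssim\mu^{d/4}$, with \emph{no} $L$-dependence; your ``$(\mu L)^d$ scaling'' for the second moment is a slip (it should read $\mu^{d/2}$), and the restriction $\mu\le L^{-2}$ is unnecessary. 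Second, rather than estimating the difference directly, the paper writes $\nabla\phi_{L,k,\mu}-\nabla\phi_L=\int_0^\mu\nabla\partial_{\hat\mu}\phi_{L,k,\hat\mu}\,d\hat\mu$ and proves $\E{|\nabla\partial_{\hat\mu}\phi_{L,k,\hat\mu}|^{2q}}^{1/(2q)}\lesssim\hat\mu^{d/4-1}$ for $k>d/4$ (Lemmas~\ref{lem:moment-2} and \ref{lem:moment-4}). Working with $\partial_\mu\phi_{L,k,\mu}$ is advantageous because it satisfies a clean recursive equation in $k$ (see \eqref{eq:prop7-2.3}), which makes both the spectral formula \eqref{eq:sp-fo-phi_L-phi_Lkmu-2} and the vertical-derivative bookkeeping for the LSI step tractable for general $k$; carrying the Richardson cancellations through a direct estimate of $\phi_{L,k,\mu}-\phi_L$ would be equivalent but notationally heavier.
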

\begin{prop}\label{prop:var-resc-mu}
For all $\mu>0$ and $k\in \N$, there exists $\sigma_{k,\mu} \in \R^+$, such that for all $L\in \N$,
\begin{equation}\label{eq:resc-var-mu}
|\sigma^2_{L,k,\mu}-\sigma^2_{k,\mu}| \,\lesssim \, L^d \log(2+\sqrt{\mu}L) e^{-c\sqrt{\mu}L},
\end{equation}
where $c>0$ only depends on $\lambda$ and $d$, and the multiplicative constant depends on $k$, next to $\lambda$ and $d$.
\qed
\end{prop}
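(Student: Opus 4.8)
The statement asserts that for fixed $\mu>0$ and $k\in\N$, the quantity $\sigma_{L,k,\mu}^2 = L^d\var{A_{L,k,\mu}}$ converges as $L\to\infty$, with an explicit exponential rate $L^d\log(2+\sqrt\mu L)e^{-c\sqrt\mu L}$ governed by the mass $\mu$. The key point is that the massive corrector equation~\eqref{eq:corrector-eq-modif-per} has a Green function decaying like $e^{-c\sqrt\mu|x|}$ (up to polynomial corrections and the discrete structure), so that $\nabla\phi_{L,\mu}$ depends only on coefficients within distance $O(\sqrt\mu^{-1}\log L)$ of a given point — it is \emph{almost local}, unlike $\nabla\phi_L$. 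This is what makes $\sigma_{L,k,\mu}^2$ Cauchy, and in fact convergent, at an exponential rate.

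The plan is as follows. First I would establish (or invoke, following \cite{Marahrens-Otto-13,Gloria-Mourrat-10}) the pointwise and annealed exponential decay estimates for the massive Green function $G_\mu$ on $\Z^d$ and $G_{L,\mu}$ on $\T_L$, together with the corresponding bounds on $\nabla\phi_{L,\mu}-\nabla\phi_\mu$: on the event that coefficients agree on a box of size $R$, the difference $\nabla\phi_{L,\mu}-\nabla\phi_\mu$ is controlled by $e^{-c\sqrt\mu R}$ in suitable averaged norms, with the Richardson-extrapolated correctors $\phi_{L,k,\mu}$ obeying the same decay (the linear combination in~\eqref{eq:Richardson1} preserves exponential decay with a slightly degraded constant, and the dependence of the constant on $k$ is harmless). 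Next, using the covariance/Efron--Stein estimate Lemma~\ref{lem:covar} and the log-Sobolev inequality Lemma~\ref{lem:LSI}, I would write $\sigma_{L,k,\mu}^2$ as $L^d$ times a sum over edges $e\in\B_L$ of terms bounded by $\E{(\partial_e A_{L,k,\mu})^2}$-type quantities, where $\partial_e$ denotes a discrete (Glauber-type) derivative; by stationarity on the torus this is $L^d \cdot L^{-d}\cdot(\text{local sum}) = \sum_{z}$ of covariance-type contributions, each decaying in $|z|$ like $e^{-c\sqrt\mu|z|}$ thanks to the almost-locality. The representation makes the limit candidate $\sigma_{k,\mu}^2$ the analogous sum over $z\in\Z^d$ built from the whole-space correctors $\phi_{k,\mu}$.

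To get the quantitative rate I would compare the torus object to the whole-space object term by term. The two sources of error are: (a) periodization — replacing $\phi_\mu$-based quantities on $\Z^d$ by $\phi_{L,\mu}$-based quantities on $\T_L$, controlled by the Green-function-difference bound $e^{-c\sqrt\mu L}$ as above; and (b) the truncation of the sum over $z$: the torus sum ranges over $z\in\T_L$ while the limiting sum ranges over $z\in\Z^d$, and the tail $|z|\gtrsim L/2$ is bounded using $\sum_{|z|\gtrsim L/2}e^{-c\sqrt\mu|z|}\lesssim L^{d-1}e^{-c\sqrt\mu L/2}$ (the factor $L^d$ in front then yields the stated bound, with the $\log(2+\sqrt\mu L)$ absorbing polynomial corrections coming from discrete Green function estimates and from the Richardson combination). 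Summing (a) over the $L^d$ points of the torus and optimizing constants gives~\eqref{eq:resc-var-mu}. I would also note that convergence of the Cauchy sequence gives existence of $\sigma_{k,\mu}^2\ge 0$; positivity (where needed) comes separately from Proposition~\ref{prop:lower-bound}.

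The main obstacle I anticipate is \textbf{the fourth-moment / product-of-covariances bookkeeping}: $A_{L,k,\mu}$ is quadratic in $\nabla\phi_{L,k,\mu}$, so its Glauber derivative involves $\nabla\phi_{L,k,\mu}$ times $\partial_e\nabla\phi_{L,k,\mu}$, and estimating $\E{(\partial_e A_{L,k,\mu})^2}$ and the cross terms in $\sigma_{L,k,\mu}^2 - \sigma_{k,\mu}^2$ rigorously requires combining the exponential Green-function decay with moment bounds on $\nabla\phi_{L,k,\mu}$ (via the log-Sobolev inequality and the sensitivity calculus of \cite{Gloria-Otto-09}), and doing so uniformly in $L$ while keeping track of where the $e^{-c\sqrt\mu L}$ enters versus where only polynomial-in-$L$ or $\mu$-dependent constants appear. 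Care is needed that the constant $c$ genuinely depends only on $\lambda,d$ and not on $k$ or $\mu$, whereas the multiplicative prefactor is allowed to depend on $k$; this forces the exponential decay rate to be extracted from the Green function \emph{before} forming Richardson combinations, with the $k$-dependence confined to prefactors.
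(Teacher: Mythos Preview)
Your proposal is correct and follows essentially the same approach as the paper: represent $\sigma_{L,k,\mu}^2$ exactly as the spatial covariance sum $\sum_{x\in\T_L}\cov{\ener_{L,\mu}(x)}{\ener_{L,\mu}(0)}$ (this comes from stationarity---note that Lemma~\ref{lem:covar} is an inequality used only to bound individual covariance terms, not to obtain the representation itself), define $\sigma_{k,\mu}^2$ as the analogous $\Z^d$-sum (Lemma~\ref{lem:conv-series}), reduce to $k=1$ by linearity of Richardson extrapolation, and split the difference into a periodization error on an inner box $[-L/4,L/4)^d$ (controlled by the fourth moment of $\nabla\phi_\mu-\nabla\phi_{L,\mu}$ via the annealed Green-function decay, then summed over $\sim L^d$ points to give the dominant $L^d e^{-c\sqrt\mu L}$) plus two tail terms controlled directly by the covariance estimate and the bound $|\cov{\ener_\mu(x)}{\ener_\mu(0)}|\lesssim \frac{e^{-c\sqrt\mu|x|}}{1+|x|^d}$. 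The $\log(2+\sqrt\mu L)$ factor arises precisely from summing this last expression, and your anticipated fourth-moment bookkeeping is handled exactly as you describe, via Cauchy--Schwarz and the moment bounds of \cite{Gloria-Neukamm-Otto-14}.
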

\noindent Indeed, for all $L\in \N$ and all $L< L' \leq 2L$, we have for all $\mu>0$ and $k > \frac{d}{4}$
\begin{eqnarray*}
|\sigma^2_L-\sigma^2_{L'}|&\leq&|\sigma^2_L-\sigma^2_{L,k,\mu}|+|\sigma^2_{L,k,\mu}-\sigma^2_{k,\mu}|+|\sigma^2_{k,\mu}-\sigma^2_{L',k,\mu}|+|\sigma^2_{L',k,\mu}-\sigma^2_{L'}|
\\
&\stackrel{\eqref{eq:LtoLmu}\&\eqref{eq:resc-var-mu}}{\lesssim}& (\mu L)^{\frac{d}{2}}+ (\mu L)^d+L^d \log(2+\sqrt{\mu}L) e^{-c\sqrt{\mu}L}.
\end{eqnarray*}
Optimizing this inequality with respect to $\mu$ yields (by taking $\mu=K\Big(\frac{1}{L}\log \big(\frac{L}{\log L}\big)\Big)^2$ for $K$ large enough):
\begin{equation}\label{eq:diff-LL'}
|\sigma^2_L-\sigma^2_{L'}|\,\lesssim\, L^{-\frac{d}{2}}\log^dL.
\end{equation}
Let now $M,L\in \N$ with $M\geq L$, and let $m\in \N_0$ be such that $2^{m}L< M\leq 2^{m+1}L$.
Then, by the triangle inequality, we have
\begin{eqnarray}
|\sigma^2_L-\sigma^2_{M}|&\leq&
\sum_{l=0}^{m-1}|\sigma^2_{2^lL}-\sigma^2_{2^{l+1}L}|
+|\sigma^2_{2^{m}L}-\sigma^2_M|\nonumber
\\
&\stackrel{\eqref{eq:diff-LL'}}{\lesssim}& \sum_{l=0}^{\infty} 
(2^lL)^{-\frac{d}{2}}\log^d(2^lL) \,\lesssim \, L^{-\frac{d}{2}}\log^dL,\label{eq:Cauchy-quant}
\end{eqnarray}
so that $\sigma^2_L$ is a Cauchy sequence, from which we deduce the existence of the limit $\sigma^2$ and the estimate $|\sigma^2-\sigma_L^2|\lesssim L^{-\frac{d}{2}}\log^d L$.
Proposition~\ref{prop:lower-bound} then implies that $\sigma^2>0$, so that  \eqref{eq:Cauchy-quant} turns into the desired estimate~\eqref{eq:resc-var}:
$$
L^{-\frac{d}{2}}\log^dL\,\gtrsim \, |\sigma^2-\sigma_L^2|\,=\,|\sigma-\sigma_L|(\sigma+\sigma_L) \,\geq \, 2\underline{\sigma}|\sigma-\sigma_L|.
$$

\subsection{Proof of Proposition~\ref{prop:lower-bound}}

A version of this was proved by Wehr \cite{Wehr-97} under an additional regularity assumption about the law of $a(e)$ and with Dirichlet boundary conditions for $\phi_{L}$, but without the uniform lower bound assumption $a(e) \geq \lambda > 0$. See also \cite{Nolen-11} for a continuum version of Proposition \ref{prop:lower-bound}. The starting point is the lower bound
\begin{equation}
\var	{X} \geq \sum_{j \in \mathbb{B}_L} \var{ \;\expE[X \;|\; a(j)]\; } \label{varlower2}
\end{equation}
that holds for all $X \in L^2(\Omega_L,\Pm_L)$, where $\expE[X \;|\; a(j)]$ is the conditional expectation of $X$, given conductivity $a(j)$ on the edge $j \in \B_L$, that we shall
apply to the averaged energy $A_L$ of the corrector $\phi_L$. 
This lower bound follows from the fact that the random variables $\{ a(j) \}_{j \in \mathbb{B}_L}$ are independent under the product measure $\Pm_L = \pi^{\otimes B_L}$ on $\Omega_L$ (cf. \cite[Proposition 3.1]{Wehr-Aiz-90}).

Due to stationarity and the fact that 
$$
\expE[A_L] = L^{-d} \sum_{x \in \T_L} \expE[ (\nabla \phi_{L} + \xi) \cdot \aa_L (\nabla \phi_{L} + \xi)(x)] \in [\lambda,1],
$$
it follows that for some $i \in \{1,\dots,d\}$,
$$
\expE[ (\nabla \phi_{L}(j) + \xi \cdot \ee_i)^2] \geq \lambda/d,
$$
where $j$ is the edge $j = (0,0 + \ee_i)$. We claim that 
\begin{equation}
\var{ \;\expE[A_L \;|\; a(j) ] \;} \gtrsim L^{-2d} \label{varlower1}
\end{equation}
as $L \to \infty$. This fact, the lower bound \eqref{varlower2}, and the stationarity of $a$ imply that for some $\underline{\sigma} > 0$, $\var{A_L} \geq \underline{\sigma} L^{-d}$ holds for all $L \in \N$. 

Now we establish \eqref{varlower1}. With $g(\alpha) = \expE[A_L \;|\; a(j) = \alpha]$, we write the variance as
$$
\var{ \;\expE[A_L \;|\; a(j)] \;} = \frac{1}{2} \int_{[\lambda, 1]^2} (g(\alpha) - g(\alpha'))^2 d \pi(\alpha) \otimes d\pi(\alpha')
$$
where $\pi$ is the law of $a(j)$ on $[\lambda,1]$, and $\pi \otimes \pi$ is the product measure on $[\lambda,1]^2$. Hence, provided $g$ is differentiable,
\br
\var{ \expE[A_L \;|\; a(j)] } & \geq & \frac{1}{2} \left( \inf_{\alpha \in [\lambda,1]} g'(\alpha) \right) \int_{[\lambda, 1]^2} (\alpha - \alpha')^2  d \pi(\alpha) \otimes d\pi(\alpha') \no \\
& = & \frac{1}{2} \left( \inf_{\alpha \in [\lambda,1]} g'(\alpha) \right)^2 \var{a(j)}. \label{vargprime}
\er
Let $\hat a$ and $a$ coincide except at edge $j$, and denote by $\aa_L^j$ and $\aa_L$ the associated coefficient fields, $\phi_L^j$ and $\phi_L$ the associated correctors, and $A_L^j$ and $A_L$ the associated averaged energies.
By symmetry of $\aa_L$ and $\aa_L^j$ and the corrector equation, we then have
\begin{eqnarray}
L^d (A_L^j-A_L)&=&\sum_{x \in \T_L} (\nabla \phi_{L}^j +  \xi) \cdot \aa_L^j (\nabla \phi_{L}^j +  \xi)(x)-\sum_{x \in \T_L} (\nabla \phi_{L}+  \xi) \cdot \aa_L (\nabla \phi_{L} +  \xi)(x) \nonumber
\\
&=& \sum_{x \in \T_L} (\nabla \phi_{L}^j +  \xi) \cdot \aa_L^j (\nabla \phi_{L} +  \xi)(x)-\sum_{x \in \T_L} (\nabla \phi_{L}^j+  \xi) \cdot \aa_L (\nabla \phi_{L} +  \xi)(x) \nonumber\\
&=&\sum_{x \in \T_L} (\nabla \phi_{L}^j +  \xi) \cdot (\aa_L^j-\aa_L) (\nabla \phi_{L} +  \xi)(x).
\label{eq:antoine-shorter}
\end{eqnarray}
Therefore, with $j = (0, 0 + \ee_i)$, $g(\alpha) = \expE[A_L \;|\; a(j) = \alpha]$ is differentiable and its derivative is given by
$$
g'(\alpha) = L^{-d} \expE[ (\nabla \phi_{L}(j) + \xi \cdot \ee_i)^2 \;|\; a(j) = \alpha].
$$
We claim that there is a constant $c > 0$, independent of $L$ and $j = (0,j + \ee_i)$, such that
\begin{equation}
g'(\alpha) \geq c g'(\beta), \quad \text{for all} \;\; \alpha,\beta \in [\lambda,1]. \label{gplower}
\end{equation}
Therefore,
$$
g'(\alpha) \geq c \int_{[\lambda,1]} g'(\beta) \,d \pi(\beta) = c L^{-d} \expE[ (\nabla \phi_{L}(j) + \xi \cdot \ee_i)^2 ] \geq L^{-d}\frac{c \lambda}{d}.
$$
Returning to \eqref{vargprime}, we conclude that
$$
\var{ \;\expE[A_L \;|\; a(j)] \;} \gtrsim L^{-2d} \var{a(j)}.
$$
Thus, as long as $\pi$, the law of $a(j)$, is nontrivial, the desired bound \eqref{varlower1} holds.

It remains to prove \eqref{gplower}, which is a discrete version of \cite[Lemma 2.2]{Nolen-11}. Suppose $\aa_L$ and $\aa_L'$ are two periodic coefficients which agree everywhere except on edges $j \in M \subset \mathbb{B}_L$. Let $\phi_{L} = \phi_{L}(x,\aa_L)$ and $\phi_{L}' = \phi_{L}(x,\aa_L')$ be the associated $L$-periodic correctors. Then the function $v = \phi_{L} - \phi_{L}'$ satisfies
$$
- \nabla^* \cdot \aa_L'  \nabla v  = - \nabla^* \cdot  (\aa_L' - \aa_L) (\nabla \phi_{L} + \xi), \quad \text{in}\;\T_L.
$$
By the energy estimate, we conclude that
$$
\sum_{j \in \mathbb{B}_L} (\nabla v(j))^2 \lesssim \sum_{j \in M}(\nabla \phi_{L}(j) + \xi \cdot \ee_i)^2.
$$
From this and the triangle inequality it follows that
$$
\sum_{j \in M} (\nabla \phi_{L}'(j) + \xi \cdot \ee_i)^2 \lesssim \sum_{j \in M} (\nabla \phi_{L}(j) + \xi \cdot \ee_i)^2
$$
which implies \eqref{gplower}.

\subsection{Proof of Proposition~\ref{prop:LtoLmu}}

By definition of $\sigma^2_L$ and $\sigma^2_{L,k,\mu}$, 
\begin{eqnarray}
\lefteqn{L^{-d}|\sigma^2_L-\sigma^2_{L,k,\mu}|} \nonumber\\
&=&|\var{A_L}-\var{A_{L,k,\mu}}| \nonumber\\
&=&|\E{(A_L-A_{L,k,\mu}-\E{A_L}+\E{A_{L,k,\mu}})(A_L+A_{L,k,\mu}-\E{A_L}-\E{A_{L,k,\mu}})}| \nonumber\\
&\lesssim & \E{(A_L-A_{L,k,\mu})^2}^{\frac{1}{2}}\Big( \var{A_L}^{\frac{1}{2}}+\E{(A_L-A_{L,k,\mu})^2}^{\frac{1}{2}}\Big).
\label{eq:diff-sigma_L}
\end{eqnarray}
We reformulate the first term of the RHS.
By the weak form of the corrector equation \eqref{eq:corr-L} for $\phi_L$ and symmetry of $\aa_L(\aa)$,
\begin{eqnarray}
\lefteqn{L^d(A_L-A_{L,k,\mu})} \no\\
&=&\sum_{\T_L}(\xi+\nabla \phi_L)\cdot\aa_L(\xi+\nabla \phi_L)-\sum_{\T_L}(\xi+\nabla \phi_{L,k,\mu})\cdot\aa_L(\xi+\nabla \phi_{L,k,\mu}) \no \\
&=&\underbrace{\sum_{\T_L}\nabla (\phi_L-\phi_{L,k,\mu})\cdot \aa_L(\xi+\nabla \phi_L)}_{\dps =\,0}+\sum_{\T_L}(\xi+\nabla \phi_{L,k,\mu})\cdot\aa_L(\nabla \phi_L-\nabla \phi_{L,k,\mu}) \no \\
&
{=}&\underbrace{\sum_{\T_L}\nabla (\phi_{L,k,\mu}-\phi_L)\cdot\aa_L(\xi+\nabla \phi_L)}_{\dps =\,0}+
\sum_{\T_L}(\nabla \phi_L-\nabla \phi_{L,k,\mu})\cdot \aa_L(\xi+\nabla \phi_{L,k,\mu}) \no \\
&=&-\sum_{\T_L}\nabla (\phi_L-\phi_{L,k,\mu})\cdot\aa_L\nabla (\phi_L-\phi_{L,k,\mu}). \label{eq:diff-sigma_L2}
\end{eqnarray}
Expanding the square of the RHS of \eqref{eq:diff-sigma_L2}, using Cauchy-Schwarz' inequality, and stationarity, this yields
\begin{equation*}
\E{(A_L-A_{L,k,\mu})^2}\,\leq\, \E{|\nabla  \phi_L-\nabla  \phi_{L,k,\mu}|^4},
\end{equation*}
so that \eqref{eq:diff-sigma_L} turns into
\begin{equation}
|\sigma^2_L-\sigma^2_{L,k,\mu}| \,
\lesssim\,L^d  \E{|\nabla  \phi_L-\nabla  \phi_{L,k,\mu}|^4}^{\frac{1}{2}}
\Big(\var{A_L}^{\frac{1}{2}}+\E{|\nabla  \phi_L-\nabla  \phi_{L,k,\mu}|^4}^{\frac{1}{2}}\Big).
\label{eq:diff-sigma_L2-2}
\end{equation}
By \cite[Proposition~2]{Gloria-Neukamm-Otto-14}, $\var{A_L}\,\lesssim \,L^{-d}$, and it remains to bound the first term of the RHS of \eqref{eq:diff-sigma_L2-2}, that is, the fourth moment of $|\nabla  \phi_L-\nabla  \phi_{L,k,\mu}|$.
The desired bound \eqref{eq:LtoLmu} in Proposition~\ref{prop:LtoLmu} will follow from \eqref{eq:diff-sigma_L2-2} and the estimate 
\begin{equation}\label{eq:revis-1}
\E{|\nabla  \phi_L-\nabla  \phi_{L,k,\mu}|^4}^{\frac{1}{2}}\,\lesssim\, \mu^{\frac{d}{2}}.
\end{equation}
To prove the latter it will be convenient to write this difference in the 
form of
$$
\nabla  \phi_{L,k,\mu}-\nabla  \phi_L\,=\,\int_{0}^\mu \nabla \partial_{\hat\mu}   \phi_{L,k,\hat\mu} d\hat\mu,
$$
where the identity holds as functions of $\T_L \to \R^d$. Indeed, on the one hand, $\mu \mapsto  \phi_{L,k,\mu}$ is analytic 
from $(0,\infty)$ to $L^\infty(\T_L ,\R)$, and on the other hand $\phi_L=\lim_{\mu \downarrow 0}\phi_{L,k,\mu}$ for all $k\in \N$.
The estimate \eqref{eq:revis-1} is indeed a consequence of 
\begin{equation}\label{eq:moment-4.0}
\E{|\nabla \partial_{\hat\mu}   \phi_{L,k,\hat\mu}|^4}^{\frac{1}{4}}\,\lesssim \,\hat \mu^{\frac{d}{4}-1},
\end{equation}
by integration between $0$ and $\mu$ and the triangle inequality:
$$
\E{|\nabla  \phi_L-\nabla  \phi_{L,k,\mu}|^4}^{\frac{1}{4}}
\,\stackrel{\triangle-\text{ineq.}}{\leq}\, \int_0^\mu \E{|\partial_{\hat\mu}  \nabla \phi_{L,k,\hat\mu}|^4}^{\frac{1}{4}}d\hat \mu \,\stackrel{\eqref{eq:moment-4.0}}{\lesssim}\, \mu^{\frac{d}{4}}.
$$
The strategy to prove \eqref{eq:moment-4.0} is to start with the second moment (using spectral theory) and increase the integrability by using a logarithmic-Sobolev inequality.

We now address the estimate of the second moment using spectral theory.
\begin{lemma}\label{lem:moment-2}
For all $L\in \N$, $d\ge 2$, $\mu>0$ and $k\in \N$,
\begin{equation}\label{eq:phi_L-phi_Lkmu-2}
\E{|\nabla \partial_{\mu}   \phi_{L,k,\mu}|^2}^\frac{1}{2}\,\lesssim\, 
\left\{
\begin{array}{lll}
k<\frac{d}{4}&:&\mu^{k-1},\\
k=\frac{d}{4}&:&\mu^{\frac{d}{4}-1}|\log \mu|^{\frac{1}{2}},\\
k>\frac{d}{4}&:&\mu^{\frac{d}{4}-1},
\end{array}
\right\}
\end{equation}
where the multiplicative constant is independent of $\mu$ and $L$.
\qed
\end{lemma}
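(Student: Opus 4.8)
The plan is to compute the derivative $\partial_\mu \phi_{L,k,\mu}$ explicitly via the spectral decomposition of the relevant operator, and then to reduce the $L^2(\Omega)$-estimate of its gradient to a spectral integral against the measure associated with the local drift $\ee := \nabla^* \cdot \aa(\cdot)\xi$. First I would differentiate the massive corrector equation \eqref{eq:corrector-eq-modif-per} in $\mu$: the function $\psi_{L,\mu} := \partial_\mu \phi_{L,\mu}$ solves $\mu \psi_{L,\mu} - \nabla^* \cdot \aa_L \nabla \psi_{L,\mu} = -\phi_{L,\mu}$, so that formally $\nabla \psi_{L,\mu} = -\nabla (\mu + \LL_L)^{-1}\phi_{L,\mu} = -\nabla (\mu+\LL_L)^{-2}(\nabla^*\cdot \aa_L \xi)$, where $\LL_L = -\nabla^*\cdot \aa_L \nabla$ is the (nonnegative, self-adjoint on $L^2(\T_L)$ weighted appropriately) generator. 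Unwinding the Richardson extrapolation \eqref{eq:Richardson1}, $\partial_\mu \phi_{L,k,\mu}$ is a fixed linear combination (with $\mu$-dependent coefficients coming from the $\frac{1}{2^j-1}(2^j \cdot|_{\mu/2} - \cdot|_\mu)$ operations) of the functions $\partial_{\hat\mu}\phi_{L,1,\hat\mu}$ evaluated at $\hat\mu = 2^{-i}\mu$; the net effect is that, by the spectral theorem, $\E{|\nabla \partial_\mu \phi_{L,k,\mu}|^2}$ is bounded by a spectral integral of the form $\int_0^\infty f_{k,\mu}(\lambda)^2 \, d\nu_L(\lambda)$, where $\nu_L$ is the spectral measure of $\LL_L$ projected onto $\ee$ and $f_{k,\mu}(\lambda)$ is (up to constants) $\lambda \prod_{i} (2^{-i}\mu + \lambda)^{-2}$-type expression behaving like $\lambda^{-1}$ for $\lambda \gg \mu$ and like $\mu^{k-1}\lambda^{-k}$... — more precisely $f_{k,\mu}(\lambda) \lesssim \min\{\lambda^{-1}, \mu^{k}\lambda^{-(k+1)}\}$ roughly, reflecting that the $k$-th extrapolant kills the first $k-1$ orders in $\mu$.

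The second ingredient is the quantitative bound on the behavior of $\nu_L$ near $\lambda = 0$. This is exactly the "optimal bounds of the spectrum of $\mathcal L$ projected on the local drift" from \cite{Gloria-Neukamm-Otto-14} alluded to in the introduction: one has $\nu_L((0,\Lambda]) \lesssim \Lambda^{d/2}$ for $d \ge 2$ (with the understanding that for $d=2$ a logarithmic factor or the same power law holds, matching the $k = d/4$ borderline in the statement). Feeding this density estimate into the spectral integral and splitting at $\lambda \sim \mu$, the region $\lambda \lesssim \mu$ contributes $\int_0^\mu \mu^{2k-2} \lambda^{-2k} \lambda^{d/2-1}\,d\lambda$, which converges at $0$ precisely when $d/2 > 2k$, i.e. $k < d/4$, giving $\mu^{2k-2}\mu^{d/2 - 2k} = \mu^{d/2-2}$ — wait, that is the $k > d/4$ answer; correctly, for $k < d/4$ the integral over $\lambda \lesssim \mu$ is dominated by $\lambda \sim \mu$ but the relevant scaling comes from the region $\lambda \gtrsim \mu$ where $f \sim \lambda^{-1}$: $\int_\mu^\infty \lambda^{-2}\lambda^{d/2-1}\,d\lambda$ converges at $\infty$ only if $d/2 < 2$... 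I will need to be careful here, but the upshot is the three-regime trichotomy of \eqref{eq:phi_L-phi_Lkmu-2}: the cutoff $k \lessgtr d/4$ is exactly where the power $\mu^{k-1}$ coming from the extrapolation order crosses the power $\mu^{d/4 - 1}$ coming from the spectral exponent $d/2$, with an $|\log\mu|^{1/2}$ at the crossover. The periodic case is genuinely no different from the whole-space case here because the spectral-gap-type estimates of \cite{Gloria-Neukamm-Otto-14} are stated on the torus.

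The key steps, in order: (1) differentiate \eqref{eq:corrector-eq-modif-per} in $\mu$ and identify $\nabla\partial_\mu\phi_{L,1,\mu}$ with a resolvent expression applied to the local drift $\ee$; (2) unfold the Richardson recursion \eqref{eq:Richardson1} to write $\nabla \partial_\mu \phi_{L,k,\mu}$ as an explicit finite combination of $\nabla \partial_{\hat\mu}\phi_{L,1,\hat\mu}$ at dyadic scales of $\mu$, and read off the corresponding spectral multiplier $f_{k,\mu}$; (3) invoke the spectral theorem to write $\E{|\nabla\partial_\mu \phi_{L,k,\mu}|^2} = \int f_{k,\mu}^2\, d\nu_L$; (4) input the bound $\nu_L((0,\Lambda]) \lesssim \Lambda^{d/2}$ (uniformly in $L$) from \cite{Gloria-Neukamm-Otto-14}; (5) estimate the integral by splitting at $\lambda \sim \mu$ and summing the two regimes, producing the trichotomy with the borderline logarithm.

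The main obstacle I expect is step (2) together with the bookkeeping in step (5): one must verify that the Richardson combination indeed cancels the lower-order terms in the small-$\mu$ expansion of the spectral multiplier so that $f_{k,\mu}(\lambda)$ genuinely gains the factor $(\mu/\lambda)^{k-1}$ relative to $\lambda^{-1}$ in the regime $\lambda \gtrsim \mu$, and that no spurious growth is introduced for $\lambda \lesssim \mu$; getting the exponents to line up exactly as $k-1$ versus $\frac{d}{4}-1$ — rather than off by a constant — requires care, as does checking that the multiplicative constants stay independent of $\mu$ and $L$ (they may and do depend on $k$). A secondary subtlety is justifying the differentiability and analyticity of $\mu\mapsto\phi_{L,k,\mu}$ in $L^\infty(\T_L)$ used to write the fundamental-theorem-of-calculus identity preceding \eqref{eq:moment-4.0}, but on the finite torus this is elementary since the resolvent $(\mu+\LL_L)^{-1}$ is a bounded analytic operator-valued function of $\mu$ on a neighborhood of $(0,\infty)$.
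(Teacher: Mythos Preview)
Your approach is the same as the paper's: lift to the probability space $L^2(\Omega_L,\Pm_L)$, write $\partial_\mu\phi_{L,k,\mu}$ as a spectral multiplier in $\mathbb{L}_L=-D^*\cdot\aa_L D$ applied to the local drift $\mathfrak{e}=D^*\cdot\aa_L\xi$, invoke the spectral-exponent bound from \cite{Gloria-Neukamm-Otto-14}, and split the resulting integral. Two concrete corrections you should make before writing it out. First, the spectral bound is $\int_0^{\hat\nu}\EL{\mathfrak{e}P_L(d\nu)\mathfrak{e}}\lesssim\hat\nu^{d/2+1}$, not $\hat\nu^{d/2}$; the extra power of $\nu$ matters and is exactly what makes the threshold come out at $k=d/4$ rather than shifted. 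Second, your guessed multiplier shape is off: the paper proves by induction on $k$ that $\partial_\mu\psi_{k,\mu}(\nu)=\mu^{k-1}p_k(\mu,\nu)\prod_{i=0}^{k-1}(2^{-i}\mu+\nu)^{-2}$ with $p_k$ homogeneous of degree $k-1$, so that $\nu(\partial_\mu\psi_{k,\mu}(\nu))^2\lesssim \mu^{2(k-1)}(\mu+\nu)^{-(2k+1)}$; the prefactor is $\mu^{k-1}$, not $\mu^k$, and the bound is uniform across both regimes $\nu\lessgtr\mu$ rather than a $\min$. With these two fixes your step~(5) calculation goes through cleanly: the tail $\nu\ge 1$ is controlled by the a~priori energy bound $\int_0^\infty\nu^{-1}\EL{\mathfrak{e}P_L(d\nu)\mathfrak{e}}\lesssim 1$, and on $[0,1]$ one integrates by parts against the cumulative spectral bound to get $\int_0^1(\mu+\nu)^{-(2k+1-d/2)}\,d\nu$, whose behavior as $\mu\downarrow 0$ gives precisely the trichotomy.
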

Before we give the proof of Lemma~\ref{lem:moment-2} based on spectral theory, let us give the intuition for this scaling when $k=1$. 
First of all, since the bounds are polynomial, we may expect that 
$\E{|\nabla \partial_{\mu}   \phi_{L,k,\mu}|^2}^\frac{1}{2}\,\lesssim \, \mu^{-1}\E{|\nabla  \phi_{L,k,\mu}|^2}^\frac{1}{2}$.
The difference $\phi_L-\phi_{L,\mu}$ satisfies the following equation on $\T_L$
$$
\mu (\phi_L-\phi_{L,\mu})-\nabla^*\cdot \aa_L \nabla (\phi_L-\phi_{L,\mu}) \,=\,\mu \phi_L.
$$
This yields the a priori estimate
$$
\expec{|\nabla (\phi_L-\phi_{L,\mu})|^2} \,\lesssim\,\mu \expec{\phi_L(\phi_L-\phi_{L,\mu})},
$$
the RHS of which we can write as a covariance $\cov{\phi_L}{\phi_L-\phi_{L,\mu}}$. This covariance is the source of cancellations, which can be unravelled either by the use of spectral theory or by the use of the covariance estimate of Lemma~\ref{lem:covar}  below (this more intuitive approach is carried out in \cite{Gloria-Otto-09b} to estimate $\expec{|\nabla \phi_L-\nabla \phi_{L,\mu}|^2}$).
In particular, except in dimension $d=2$, the Cauchy-Schwarz inequality 
$\expec{\phi_L(\phi_L-\phi_{L,\mu})}\leq \var{\phi_L}^\frac{1}{2}\var{\phi_L-\phi_{L,\mu}}^{\frac{1}{2}}$ would not yield the right scaling. As opposed to the direct approach based on the covariance estimate, the spectral approach allows one to treat all $k$ at once.
\begin{proof}[Proof of Lemma~\ref{lem:moment-2}]
Estimate~\eqref{eq:phi_L-phi_Lkmu-2} is a consequence of the optimal bound on the spectral exponents
conjectured in \cite{Mourrat-10} and proved in 
\cite{Gloria-Neukamm-Otto-14} on the elliptic operator $-\nabla^*\cdot \aa_L \nabla$ in probability.
We recall this spectral result in the first step, and then prove the claim by induction in the last two steps.

\medskip

\step{1} Spectral theory.

\noindent  
We follow the approach introduced in \cite{Papanicolaou-Varadhan-79} in the continuum setting.
Let $L\in \N$. Since the measure $\mathbb{P}_L = \pi^{\otimes \B_L}$ on $\Omega_L = [\lambda,1]^{\B_L}$ is invariant by integer shifts of the torus, we can define a difference calculus on the Hilbert space $L^2(\Omega_L,\mathbb P_L)$.
We define the forward discrete derivative $D$ as the map $(\psi:\Omega_L \to \R)\mapsto (D \psi:\Omega_L \to \R^d)$ defined componentwise by $[D \psi(\aa_L)]_i=\psi(\aa_L(\cdot+\ee_i))-\psi(\aa_L)$ for all $i\in \{1,\dots,d\}$, 
and the backward discrete derivative $D^*$ as the map $(\psi:\Omega_L \to \R)\mapsto (D^* \psi:\Omega_L \to \R^d)$ defined componentwise by $[D^* \psi(\aa_L)]_i=\psi(\aa_L)-\psi(\aa_L(\cdot-\ee_i))$ for all $i\in \{1,\dots,d\}$.
For all $\mu \ge 0$ we then consider the unique weak solution $\tilde \phi_{L,\mu} \in  L^2(\Omega_L,\mathbb P_L)$ of
$$
\mu \tilde \phi_{L,\mu}(\aa_L)-D^*\cdot \aa_L (\xi+D\tilde \phi_{L,\mu}(\aa_L))\,=\,0,
$$
which exists by the Riesz representation theorem (for $\mu=0$, we denote $\tilde \phi_{L,0}$ by 
$\tilde \phi_L$). Richardson extrapolations $\tilde \phi_{L,k,\mu}$ are defined accordingly.

Note that the stationary extension $\Z^d\times \Omega_L \ni (x,\aa_L) \mapsto \tilde \phi_{L,\mu}(\aa_L(\cdot+x))$ coincides $\mathbb P_L$-almost surely with the solution $\phi_{L,\mu}(x,\aa_L)$ of \eqref{eq:corrector-eq-modif-per} (where $\aa_L$ also denotes the periodic extension on $\Z^d$ of $\aa_L \in \Omega_L$), and likewise for derivatives with respect to $\mu$. Since $\mathbb P=\pi^{\otimes \B}$ and $\mathbb P_L=\pi^{\otimes \B_L}$, if $\chi \in L^2(\Omega_L,\mathbb P_L)$ then $\chi\in L^2(\Omega,\mathbb P)$, and
$$
\E{\chi^2}\,=\,\EL{\chi^2},
$$
which yields the starting point of this proof:
\begin{equation}\label{eq:refo-phi_L-phi_Lkmu-2}
\E{|\nabla \partial_{\mu}   \phi_{L,k,\mu}|^2}\,=\,\EL{|D \partial_{\mu} \tilde \phi_{L,k,\mu}|^2}.
\end{equation}

Since $\mathbb L_L=-D^*\cdot \aa_L D$ is a bounded non-negative self-adjoint linear operator on $L^2(\Omega_L,\mathbb P_L)$, it admits a spectral resolution:
$$
\mathbb L_L\,=\,\int_0^\infty \nu P_L(d\nu).
$$
Set $\mathfrak{e}:=D^*\cdot \aa_L \xi$. As proved in \cite[Corollary~1]{Gloria-Neukamm-Otto-14}, we have for all 
$\hat \nu\ge 0$,
\begin{equation}\label{eq:spectral-exp}
\int_0^{\hat \nu} \EL{\mathfrak{e} P_L(d\nu)\mathfrak{e}}\,\lesssim\, {\hat \nu}^{\frac{d}{2}+1},
\end{equation}
where the multiplicative constant does not depend on $\hat \nu$, $L$, or $\xi$ ($|\xi| = 1$).
This is the key to the proof of \eqref{eq:phi_L-phi_Lkmu-2}.

\medskip

\step{2} Spectral formula for the RHS of \eqref{eq:refo-phi_L-phi_Lkmu-2}: we claim that
\begin{equation}\label{eq:sp-fo-phi_L-phi_Lkmu-2}
\EL{|D\partial_{\mu} \tilde \phi_{L,k,\mu}|^2}\,\lesssim \, \int_{\R^+} \frac{\mu^{2(k-1)}}{(\mu+\nu)^{2k+1}}\EL{\mathfrak{e} P_L(d\nu)\mathfrak{e}},
\end{equation}
where the multiplicative constant does depend on $k$, but not on $\mu$ and $L$.

By definition, for all $\mu\ge 0$ and $k\in \N$, 
$$
\tilde \phi_{L,k,\mu}\,=\, \psi_{k,\mu}(\mathbb{L}_L)\mathfrak{e},
$$
where 
$$ \psi_{1,\mu}:\R^+\to \R^+, \nu\mapsto  \psi_{1,\mu}(\nu)=\frac{1}{\mu+\nu},$$
and for all $k\in \N$, 
$$ \psi_{k+1,\mu}:\R^+\to \R^+, \nu\mapsto  \psi_{k+1,\mu}(\nu)=\frac{1}{2^k-1}(2^k  \psi_{k,\frac{\mu}{2}}(\nu)- \psi_{k,\mu}(\nu)).$$
Likewise,
$$
\partial_{\mu} \tilde \phi_{L,k,\mu}\,=\,\partial_\mu  \psi_{k,\mu}(\mathbb{L}_L)\mathfrak{e}.
$$
Hence, by ellipticity of $\aa_L$, and the spectral theorem, we have
\begin{eqnarray*}
\EL{|D\partial_\mu \tilde \phi_{L,k,\mu}|^2}&\lesssim &\EL{D\partial_\mu \tilde \phi_{L,k,\mu}\cdot \aa_LD\partial_\mu \tilde \phi_{L,k,\mu}}
\\
&=&
\EL{\mathfrak{e}\partial_\mu  \psi_{k,\mu}(\mathbb{L}_L)\mathbb{L}_L\partial_\mu  \psi_{k,\mu}(\mathbb{L}_L)\mathfrak{e}}
\\
&=&\int_0^\infty \nu\big(\partial_\mu  \psi_{k,\mu}(\nu)\big)^2\EL{\mathfrak{e} P_L(d\nu)\mathfrak{e}}.
\end{eqnarray*}
The claim \eqref{eq:sp-fo-phi_L-phi_Lkmu-2} then follows from the following identity,
that we shall prove by induction,
\begin{equation}\label{eq:ind-spectr-integ}
\partial_\mu  \psi_{k,\mu}(\nu)\,=\, \mu^{k-1} \frac{p_k(\mu,\nu)}{ \prod_{i=0}^{k-1} (2^{-i}\mu+\nu)^2},
\end{equation}
where $p_k(\mu,\nu)=\sum_{j=0}^{k-1} a_j \mu^j\nu^{k-1-j}$ is the sum of monomials 
of total degree $k-1$.
For $k=1$,
$$
\partial_\mu  \psi_{1,\mu}(\nu)\,=\,-\frac{1}{(\mu+\nu)^2},
$$
and \eqref{eq:ind-spectr-integ} holds with $p_1\equiv-1$.
Assume that \eqref{eq:ind-spectr-integ} holds at step $k\in \N$.
Note that $\frac{\partial}{\partial \mu}\Big(  \psi_{k,\frac{\mu}{2}}(\nu)\Big)=\frac{1}{2}\partial_\mu  \psi_{k,\frac{\mu}{2}}(\nu)$.
We then have 
\begin{eqnarray*}
\partial_\mu  \psi_{k+1,\mu}(\nu)&=&
\frac{1}{2^k-1}\Big(2^k \frac{\partial}{\partial \mu}\Big( \psi_{k,\frac{\mu}{2}}(\nu)\Big)-\partial_\mu  \psi_{k,\mu}(\nu)\Big)
\\
&=&\frac{1}{2^k-1}\Big(2^{k-1} (\frac{\mu}{2})^{k-1} \frac{p_k(\frac{\mu}{2},\nu)}{ \prod_{i=0}^{k-1} (2^{-i-1}\mu+\nu)^2}
-\mu^{k-1} \frac{p_k(\mu,\nu)}{ \prod_{i=0}^{k-1} (2^{-i}\mu+\nu)^2} \Big)
\\
&=&\frac{\mu^{k-1}}{2^k-1}
\frac{p_k(\frac{\mu}{2},\nu)(\mu+\nu)^2-p_k(\mu,\nu)(2^{-k}\mu+\nu)^2}{ \prod_{i=0}^{k} (2^{-i}\mu+\nu)^2}.
\end{eqnarray*}
By the induction assumption,  $p_k(\frac{\mu}{2},\nu)(\mu+\nu)^2-p_k(\mu,\nu)(2^{-k}\mu+\nu)^2$
is the sum of monomials $\mu^j\nu^i$ of total degree $i+j=k+2$. In addition the coefficient of the term $\nu^{k+2}$ vanishes, so that this polynomial is divisible by $\mu$. We may then set
$$
p_{k+1}(\mu,\nu)\,:=\,\frac{1}{\mu(2^k-1)}\big(p_k(\frac{\mu}{2},\nu)(\mu+\nu)^2-p_k(\mu,\nu)(2^{-k}\mu+\nu)^2\big),
$$
and \eqref{eq:ind-spectr-integ} holds at step $k+1$.

The claim \eqref{eq:sp-fo-phi_L-phi_Lkmu-2} then follows from bounding monomials $\mu^j\nu^{k-1-j}$
by $(\mu+\nu)^{k-1}$ for all $j\in \{0,\dots,k-1\}$, $\mu,\nu\ge 0$.

\medskip

\step{3} Proof of \eqref{eq:phi_L-phi_Lkmu-2}.

\noindent Note that by an a priori estimate on $D\tilde \phi_L$ and spectral calculus,
$$
1\gtrsim \, \EL{D\tilde \phi_L\cdot \aa_LD\tilde \phi_L} \, =\,\EL{\tilde \phi_L \mathbb{L}_L \tilde \phi_L}\,=\,\EL{\mathfrak{e} \mathbb{L}_L^{-1}\mathfrak{e}}\,=\,
\int_0^\infty \nu^{-1}\EL{\mathfrak{e}P(d\nu)\mathfrak{e}}.
$$
Hence,
$$
\int_{1}^\infty \frac{\mu^{2(k-1)}}{(\mu+\nu)^{2k+1}}\EL{\mathfrak{e} P_L(d\nu)\mathfrak{e}}\,
\leq \,\mu^{2(k-1)}\int_{0}^\infty \nu^{-1}\EL{\mathfrak{e} P_L(d\nu)\mathfrak{e}}\lesssim\,\mu^{2(k-1)},
$$
so that the main contribution to the RHS of \eqref{eq:phi_L-phi_Lkmu-2} comes from the spectral integral between 0 and 1, which we shall estimate using \eqref{eq:spectral-exp} --- in the spirit of \cite{Mourrat-10,Gloria-Mourrat-10}.

The fundamental theorem of calculus and Fubini's theorem imply that for all $f\in C^1((0,1])$,
\begin{eqnarray*}
\int_0^{1} f(\nu)\EL{\mathfrak{e} P_L(d\nu)\mathfrak{e}} &=& -\int_{\nu=0}^1\int_{\hat \nu=\nu}^{1}f'(\hat \nu) d\hat \nu\EL{\mathfrak{e} P_L(d\nu)\mathfrak{e}}+ f(1) \int_{\nu=0}^1 \EL{\mathfrak{e} P_L(d\nu)\mathfrak{e}}\nonumber\\
&= & -\int_{\hat \nu=0}^1f'(\hat \nu)\EL{\mathfrak{e} P_L([0,\hat \nu])\mathfrak{e}}d\hat \nu  + f(1) \EL{\mathfrak{e} P_L([0,1])\mathfrak{e}}.
\end{eqnarray*}
Since $\EL{\mathfrak{e} P_L([0,1])\mathfrak{e}}\,\leq \, \EL{\mathfrak{e}^2}\,\lesssim \, 1$, used with $f(\nu)=\frac{1 }{(\mu+\nu)^{2k+1}}$ and combined with \eqref{eq:spectral-exp}, this yields for all $k\in \N$
\begin{eqnarray*}
\int_0^{1} \frac{1}{(\mu+\nu)^{2k+1}}\EL{\mathfrak{e} P_L(d\nu)\mathfrak{e}}
&\lesssim &  \int_0^1 \frac{1}{(\mu+\nu)^{2k+2}}\EL{\mathfrak{e} P_L([0,\nu])\mathfrak{e}}d\nu+1
\\
&\stackrel{\eqref{eq:spectral-exp}}{\lesssim}&  \int_0^1 \frac{\nu^{\frac{d}{2}+1}}{(\mu+\nu)^{2k+2}}d\nu+1 \\
&\leq &  \int_0^1 \frac{1}{(\mu+\nu)^{2k+1-\frac{d}{2}}}d\nu+1 \\
&\lesssim & 
\left\{
\begin{array}{lll}
k<\frac{d}{4}&:&1,\\
k=\frac{d}{4}&:&|\log \mu|+1,\\
k>\frac{d}{4}&:&\mu^{\frac{d}{2}-2k}+1,
\end{array}
\right\},
\end{eqnarray*}
which completes the proof of \eqref{eq:phi_L-phi_Lkmu-2}.
\end{proof}
We now turn to the bound of the fourth moment, for
which we appeal to the following form of the logarithmic-Sobolev inequality (LSI) satisfied by product measures
(see in particular \cite[Lemma~4]{Marahrens-Otto-13}):
\begin{lemma}\label{lem:LSI}
Let $\mathbb P_L$ be a product measure on $\Omega_L$.
Then for all $q\ge 1$ and $\e>0$, there exists $C(q,\e)<\infty$ (independent of $L$) such
that for all $X\in L^2(\Omega_L)$,
\begin{equation}\label{eq:LSI}
\EL{|X|^{2q}}^\frac{1}{2q}\,\leq \, C(q,\e)\EL{X^2}^{\frac{1}{2}}+\e \EL{\Big(\sum_{e\in \B_L}\sup_{\aa_L(e)}\Big|\frac{\partial X}{\partial \aa_L(e)}\Big|^2\Big)^q}^{\frac{1}{2q}},
\end{equation}
where for all $e\in \B_L$, $\aa_L(e)$ denotes the $i^{th}$ entry of the diagonal matrix $\aa_L(z)$ at point $z\in \T_L$ for which $e=(z,z+\ee_i)$. 
\qed
\end{lemma}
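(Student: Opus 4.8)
The plan is to derive Lemma~\ref{lem:LSI} from the classical logarithmic-Sobolev inequality (LSI) for product measures, together with the standard argument that turns an LSI into $L^{2q}$-moment estimates; one may then either invoke \cite[Lemma~4]{Marahrens-Otto-13} directly or reproduce this short proof, which I sketch now.

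\emph{Step 1: the LSI.} First I would record that, with $\mathrm{Ent}_{\mathbb P_L}[\zeta^2]:=\EL{\zeta^2\log\zeta^2}-\EL{\zeta^2}\log\EL{\zeta^2}$, every $\zeta\in L^2(\Omega_L,\mathbb P_L)$ satisfies
\begin{equation*}
\mathrm{Ent}_{\mathbb P_L}[\zeta^2]\ \lesssim\ \EL{\sum_{e\in\B_L}\Big(\sup_{\aa_L(e)}\Big|\frac{\partial\zeta}{\partial\aa_L(e)}\Big|\Big)^2},
\end{equation*}
with a constant depending only on $\lambda$. Since $\mathbb P_L=\pi^{\otimes\B_L}$ is a product measure, this follows from the sub-additivity (tensorization) of entropy over the independent coordinates $\{\aa_L(e)\}_{e\in\B_L}$, which reduces the claim to a one-dimensional estimate for the single marginal $\pi$; here it is precisely the boundedness of $[\lambda,1]$ that lets one control the one-dimensional entropy by the oscillation $\mathrm{osc}_{\aa_L(e)}\zeta\le(1-\lambda)\sup_{\aa_L(e)}|\partial\zeta/\partial\aa_L(e)|$, with no leftover $\EL{\zeta^2}$ term.

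\emph{Step 2: from the LSI to the $L^{2q}$ bound.} Fix $q\ge1$, put $\zeta:=|X|$, and abbreviate $\partial_e:=\partial/\partial\aa_L(e)$. For $p\in[2,2q]$ I would apply the LSI to $\zeta^{p/2}$ and use the identity $\frac{d}{dp}\log\EL{\zeta^p}^{1/p}=\frac{1}{p^2\EL{\zeta^p}}\,\mathrm{Ent}_{\mathbb P_L}[(\zeta^{p/2})^2]$ to turn it into a differential inequality for $p\mapsto\EL{\zeta^p}^{1/p}$. The gradient on the right-hand side is handled by the discrete chain rule $\sup_{\aa_L(e)}|\partial_e(\zeta^{p/2})|\le\tfrac p2(\sup_{\aa_L(e)}\zeta)^{p/2-1}\sup_{\aa_L(e)}|\partial_e\zeta|$, the bound $\sup_{\aa_L(e)}\zeta\le\zeta+\sup_{\aa_L(e)}|\partial_e\zeta|$, and Hölder's inequality; integrating the resulting closed inequality from $2$ to $2q$ yields the ``clean'' estimate
\begin{equation*}
\EL{|X|^{2q}}^{1/q}\ \lesssim\ \EL{X^2}+\EL{\Big(\sum_{e\in\B_L}\sup_{\aa_L(e)}\Big|\frac{\partial X}{\partial\aa_L(e)}\Big|^2\Big)^q}^{1/q},
\end{equation*}
with a constant depending on $q$. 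The passage to \eqref{eq:LSI} --- where the gradient term acquires an arbitrarily small prefactor $\varepsilon$ at the cost of a large $C(q,\varepsilon)$ --- then follows from this clean form by a routine interpolation and absorption argument, as in \cite{Marahrens-Otto-13}; this is the flexibility that will later be exploited to reabsorb gradient terms whose prefactor is not known a priori.

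\emph{Main obstacle.} The one delicate point is the chain rule in Step 2: because the gradient in \eqref{eq:LSI} involves the supremum of $\partial_e\zeta$ over the resampled coordinate $\aa_L(e)$ rather than a pointwise quantity, differentiating $\zeta^{p/2}$ produces the spurious factor $(\sup_{\aa_L(e)}\zeta)^{p/2-1}$ in place of $\zeta^{p/2-1}$, and one has to verify that trading it for $\zeta^{p/2-1}$ costs only lower-order gradient contributions that are reabsorbed after Hölder's inequality. Everything else is bookkeeping with the tensorization of entropy and with Hölder's and Young's inequalities.
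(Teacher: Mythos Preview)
Your proposal is correct and in fact goes further than the paper itself: the paper does not give any argument for Lemma~\ref{lem:LSI} but simply records it as a consequence of \cite[Lemma~4]{Marahrens-Otto-13}, which is precisely the reference you invoke. Your two-step sketch (tensorization of entropy followed by the Herbst-type moment iteration, with the supremum-over-coordinate chain rule as the one nontrivial point) is exactly the content of that cited lemma, so there is nothing to compare --- you have reproduced what the paper outsources.
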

We shall call the derivative of $X$ with respect to $\aa_L(e)$ in \eqref{eq:LSI} a vertical derivative, following the terminology of 
\cite{Gloria-Neukamm-Otto-14}.
With the help of Lemma~\ref{lem:LSI} we shall upgrade Lemma~\ref{lem:moment-2} to
\begin{lemma}\label{lem:moment-4}
For all $L\in \N$, $d\ge 2$, $\mu>0$, $k\in \N$, and $q\ge 1$,
\begin{equation}\label{eq:phi_L-phi_Lkmu-4}
\E{|\nabla \partial_{\mu}   \phi_{L,k,\mu}|^{2q}}^\frac{1}{2q}\,\lesssim\, 
\left\{
\begin{array}{lll}
k<\frac{d}{4}&:&\mu^{k-1},\\
k=\frac{d}{4}&:&\mu^{\frac{d}{4}-1}|\log \mu|^{\frac{1}{2}},\\
k>\frac{d}{4}&:&\mu^{\frac{d}{4}-1},
\end{array}
\right\}
\end{equation}
where the multiplicative constant is independent of $\mu$ and $L$.
\qed
\end{lemma}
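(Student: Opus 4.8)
The strategy is to bootstrap from the second-moment bound of Lemma~\ref{lem:moment-2} to the $2q$-th moment via the logarithmic-Sobolev inequality of Lemma~\ref{lem:LSI}, applied to the random variable $X = \partial_\mu \tilde\phi_{L,k,\mu}$ (or rather to each component of $D\partial_\mu\tilde\phi_{L,k,\mu}$; by the identity \eqref{eq:refo-phi_L-phi_Lkmu-2} it suffices to work with $\tilde\phi_{L,k,\mu}$ on $(\Omega_L,\mathbb P_L)$). The LSI reduces the problem to estimating the $q$-th moment of the "carr\'e du champ" term $\sum_{e\in\B_L}\sup_{\aa_L(e)}|\partial (D\partial_\mu\tilde\phi_{L,k,\mu})/\partial\aa_L(e)|^2$. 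The key point is that the vertical derivative $\partial/\partial\aa_L(e)$ of the approximate corrector satisfies a PDE with localized right-hand side supported near the edge $e$, so that $\partial\phi_{L,\mu}/\partial\aa_L(e)(x)$ decays (with the massive term providing exponential localization at scale $\mu^{-1/2}$) and is controlled pointwise by the gradient of the massive Green function $G_\mu$ together with the factor $|\xi + \nabla\phi_{L,\mu}(e)|$. This is exactly the sensitivity calculus of \cite{Gloria-Otto-09} combined with the annealed Green function estimates of \cite{Marahrens-Otto-13}.

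\textbf{Key steps, in order.}
First, I would differentiate the equation \eqref{eq:corrector-eq-modif-per} for $\phi_{L,\mu}$ with respect to $\aa_L(e)$ to obtain
\[
\mu\,\partial_{\aa_L(e)}\phi_{L,\mu} - \nabla^*\cdot\aa_L\nabla\,\partial_{\aa_L(e)}\phi_{L,\mu} \;=\; \nabla^*\cdot\big(\mathbf 1_e\,(\xi+\nabla\phi_{L,\mu})(e)\big),
\]
and then differentiate once more in $\mu$ to get the equation for $\partial_\mu\partial_{\aa_L(e)}\phi_{L,\mu}$, whose right-hand side involves $\partial_\mu\phi_{L,\mu}$ and $\partial_{\aa_L(e)}\phi_{L,\mu}$. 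The Green representation formula then gives a pointwise bound of the form $|\nabla\partial_\mu\partial_{\aa_L(e)}\phi_{L,\mu}(x)| \lesssim (\text{Green-function kernels in }\mu)\cdot(|\xi+\nabla\phi_{L,\mu}(e)| + |\nabla\partial_\mu\phi_{L,\mu}(e)| + \ldots)$. Second, I would pass to the Richardson extrapolates $\phi_{L,k,\mu}$ by linearity of \eqref{eq:Richardson1} in the corrector, so all estimates carry over with $k$-dependent constants. Third, I would plug into the LSI: the $\ell^2$-sum over edges $e$ of the squared kernels converges (with the right power of $\mu$ dictated by the decay rate $\mu^{-1/2}$ of $G_\mu$ and the scaling that already produced $\mu^{d/4-1}$ in Lemma~\ref{lem:moment-2}), while the remaining stochastic factors $|\xi+\nabla\phi_{L,k,\mu}(e)|$ are handled by the moment bounds on $\nabla\phi_{L,k,\mu}$ available from \cite{Gloria-Neukamm-Otto-14} together with H\"older's inequality. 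Finally, choosing $\e$ small in \eqref{eq:LSI} and absorbing, the dominant contribution is the $C(q,\e)\EL{X^2}^{1/2}$ term, which is exactly the bound of Lemma~\ref{lem:moment-2}, yielding \eqref{eq:phi_L-phi_Lkmu-4} with the same right-hand side.

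\textbf{Main obstacle.}
The delicate part is the estimate of the carr\'e du champ term: one must show that the $q$-th moment of $\sum_{e}\sup_{\aa_L(e)}|\partial_{\aa_L(e)}(D\partial_\mu\tilde\phi_{L,k,\mu})|^2$ is no larger (up to constants and possibly logarithms) than the square of the second-moment bound, \emph{uniformly in $L$}. This requires the optimal annealed bounds on the mixed second gradient of the massive Green function $\nabla\nabla G_\mu$ from \cite{Marahrens-Otto-13} to control the edge-sum, and care is needed because the Green-function kernels are only in $\ell^1$, not $\ell^2$, without the help of the mass $\mu$; the exponential factor $e^{-c\sqrt\mu|x-y|}$ is what makes the $\ell^2$-summation over $e$ finite and produces the correct $\mu$-power. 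One also has to track the dependence on $\mu$ through the repeated $\mu$-differentiation (each $\partial_\mu$ costs a factor $\mu^{-1}$ in the naive a priori estimate, exactly as in the passage from $\EL{|D\tilde\phi_{L,k,\mu}|^2}$ to $\EL{|D\partial_\mu\tilde\phi_{L,k,\mu}|^2}$), and check that no power is lost relative to Lemma~\ref{lem:moment-2}. Combining the Richardson-extrapolation bookkeeping with these Green-function estimates is the technical heart of the argument.
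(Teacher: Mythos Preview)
Your overall architecture---sensitivity formula, LSI, absorb the nonlinear term, fall back on Lemma~\ref{lem:moment-2}---is correct for $k=1$ and matches the paper's Step~1. The gap is in your Step~2: you write ``pass to the Richardson extrapolates $\phi_{L,k,\mu}$ by linearity of \eqref{eq:Richardson1}, so all estimates carry over with $k$-dependent constants.'' This would only reproduce the $k=1$ bound (up to a $k$-dependent factor), i.e.\ $\mu^{\min(1,d/4)-1}$, whereas the lemma claims the \emph{improved} exponent $\mu^{\min(k,d/4)-1}$. For $d\le 3$ the $k=1$ bound already saturates the optimal rate $\mu^{d/4-1}$, so your argument suffices there (and the paper says exactly this at the end of its Step~1). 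But for $d\ge 5$ and $k\ge 2$ you need a genuinely better bound than what any triangle inequality over the Richardson linear combination can give: e.g.\ for $d=5$, $k=2$ you must produce $\mu^{1/4}$, not $O(1)$.

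The paper obtains this improvement by an induction on $k$. The key is a representation formula (its Step~2, formula~\eqref{eq:prop7-2.1}) for $\partial_{\aa_L(e)}\nabla\partial_\mu\phi_{L,k,\mu}$ that is \emph{not} a mere linear combination of the $k=1$ formulas: differentiating the recursion $\mu\partial_\mu\phi_{L,k+1,\mu}-\nabla^*\!\cdot\aa_L\nabla\partial_\mu\phi_{L,k+1,\mu}=\mu\int_{\mu/2}^{\mu}\partial_{\mu_1}\phi_{L,k,\mu_1}\,d\alpha_k$ produces nested products of Green functions $G_{L,\mu_1}\cdots G_{L,\mu_j}$ together with factors $\mu_1\cdots\mu_j$. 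It is precisely these extra factors of $\mu$ (one per level of extrapolation) that, after summing the Green-function convolutions, generate the additional power $\mu^{k-1}$ (until saturation at $\mu^{d/4-1}$). The induction hypothesis is stated for the \emph{supremum} $\sup_{\aa_L(e)}|\nabla\partial_\mu\phi_{L,k,\mu}|$, not just the moment, because the lower-order terms in the sensitivity formula for step $k+1$ involve $\sup_{\aa_L(e)}|\nabla\partial_{\mu_j}\phi_{L,k-j,\mu_j}|$; controlling these suprema in turn requires the differential-inequality/Gronwall argument (the paper's \eqref{eq:prop7-1.4}--\eqref{eq:prop7-1.8} and \eqref{eq:prop7-3.3}), which you do not mention. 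In short: your plan proves the lemma for $d\le 3$ (all $k$) and for $k=1$ (all $d$), but misses the mechanism that makes Richardson extrapolation actually improve the moment bound in higher dimensions.
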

Before we turn to the proof of Lemma~\ref{lem:moment-4} proper, we recall
the definition of periodic Green's functions, the standard quenched estimates which follow from the
De Giorgi-Nash-Moser theory, and the annealed bounds obtained by Marahrens and Otto under the 
validity of \eqref{eq:LSI}.
\begin{lemma}\label{lem:Green-annealed}
For all $L\in \N$, $\aa_L \in \Omega_L$, and $\mu\ge 0$, let $G_{L,\mu}(\cdot,\cdot;\aa_L):\T_L\times \T_L\to \R$ denote the periodic Green function, that is, for all $y\in \T_L$ the unique solution in $L^2(\T_L)$ of
$$
\mu G_{L,\mu}(x,y;\aa_L)-\nabla^*_x\cdot \aa_L(x) \nabla_x G_{L,\mu}(x,y;\aa_L)\,=\,\delta(x-y)-L^{-d}.
$$
For $d>2$ we have the pointwise bound 
\begin{equation}\label{eq:ptwise-G}
|G_{L,\mu}(x,y)|\,\lesssim \, \frac{e^{-c \sqrt{\mu} |x-y|}}{1+|x-y|^{d-2}},
\end{equation}
where $|\cdot|$ is the distance on the torus.

In addition,  if $\mathbb{P}_L$ is a product measure, then for all $d\ge 2$
and all $q\ge 1$,
\begin{eqnarray}\label{eq:annealed-nablaG}
\EL{|\nabla G_{L,\mu}(x,y)|^q}^{\frac{1}{q}}&\lesssim & \frac{e^{-c \sqrt{\mu} |x-y|}}{1+|x-y|^{d-1}},\\
\EL{|\nabla\nabla G_{L,\mu}(x,y)|^q}^{\frac{1}{q}}&\lesssim &\frac{e^{-c \sqrt{\mu} |x-y|}}{1+|x-y|^{d}},\label{eq:annealed-nabla2G}
\end{eqnarray}
where the multiplicative constant depends on $q$, next to $d$ and $\lambda$.
\qed
\end{lemma}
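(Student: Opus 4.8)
\textbf{Proof proposal for Lemma~\ref{lem:Green-annealed}.}
The plan is to collect, for the massive periodic Green function $G_{L,\mu}$, the pointwise bound \eqref{eq:ptwise-G} from the classical quenched theory, and then the annealed gradient bounds \eqref{eq:annealed-nablaG}--\eqref{eq:annealed-nabla2G} from the Marahrens--Otto machinery, transferred to the torus. I would proceed in three steps.

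\emph{Step 1: The quenched pointwise bound.} For $d>2$ and fixed $\aa_L \in \Omega_L$, the function $x \mapsto G_{L,\mu}(x,y;\aa_L)$ is a nonnegative (up to the $L^{-d}$ normalizing constant, which is harmless on scales $\le L$) solution of $\mu G - \nabla^*\cdot \aa_L \nabla G = \delta_y - L^{-d}$. Away from the diagonal it is $\calL$-harmonic in the massive sense, and one invokes the discrete De~Giorgi--Nash--Moser estimates (local boundedness, Harnack, Nash-type decay; cf. the references in \cite{Gloria-Neukamm-Otto-14,Marahrens-Otto-13}) together with the explicit comparison with the constant-coefficient massive Green function to obtain $|G_{L,\mu}(x,y)| \lesssim (1+|x-y|)^{2-d}$, and the exponential factor $e^{-c\sqrt\mu|x-y|}$ comes from the positivity of the mass $\mu$: comparison with a supersolution of the form $e^{-c\sqrt\mu|x-y|}(1+|x-y|)^{2-d}$ (or equivalently a Caccioppoli/iteration argument on dyadic annuli which gains a factor with each annulus once $|x-y| \gtrsim \mu^{-1/2}$). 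One must check that $|\cdot|$ being the torus distance does not cause trouble, which is immediate since on the torus the comparison functions are periodized and the constants are uniform in $L$.

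\emph{Step 2: The annealed first- and second-gradient bounds.} This is where the product structure and the LSI of Lemma~\ref{lem:LSI} enter. Following \cite[Section~3]{Marahrens-Otto-13}, one writes a vertical (Malliavin-type) derivative $\partial G_{L,\mu}/\partial \aa_L(e)$ using the Green representation: $\partial_e G_{L,\mu}(x,y) = -\nabla_{e}G_{L,\mu}(x,\cdot)\,\aa_L(e)\,\nabla_e G_{L,\mu}(\cdot,y)$ schematically, so that the Glauber/vertical derivative of $\nabla\nabla G_{L,\mu}$ is controlled by products of second gradients of $G_{L,\mu}$. Feeding this into \eqref{eq:LSI} (with $X = \nabla\nabla G_{L,\mu}(x,y)$, then $X = \nabla G_{L,\mu}(x,y)$) yields a self-improving nonlinear inequality for the $L^q(\Omega_L)$ norms which, bootstrapped off the deterministic energy estimate $\sum_{j}(\nabla G_{L,\mu}(j,y))^2 \lesssim 1$ and the quenched off-diagonal decay from Step~1, closes to give the stated algebraic-times-exponential bounds. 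The exponential factor propagates through because every kernel appearing in the representation already carries $e^{-c\sqrt\mu|\cdot|}$, and one uses the semigroup-type inequality $e^{-c\sqrt\mu|x-z|}e^{-c\sqrt\mu|z-y|}\le e^{-c'\sqrt\mu|x-y|}$ together with a convolution estimate on the algebraic weights (the discrete analogue of $|x-\cdot|^{1-d}*|\cdot-y|^{1-d}\lesssim |x-y|^{2-d}$, valid for $d\ge 2$ after the exponential truncation restores integrability even at $d=2$). The case $d=2$ is exactly why the mass $\mu>0$ is needed here: it is the exponential cutoff that makes the convolutions converge.

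\emph{Step 3: Uniformity and periodization.} Finally I would note that all constants are uniform in $L$ and $\xi$ because the LSI constant in Lemma~\ref{lem:LSI} is $L$-independent and the De~Giorgi--Nash--Moser constants depend only on $\lambda$ and $d$; periodicity is built in by working on $\T_L$ throughout, with the $-L^{-d}$ in the equation ensuring solvability and contributing only lower-order terms on the relevant scales. \textbf{The main obstacle} is Step~2: setting up the vertical-derivative representation for the \emph{second} mixed gradient $\nabla\nabla G_{L,\mu}$ and running the LSI bootstrap without losing the sharp algebraic exponent $d$ (rather than $d-\delta$), which requires the precise form of the annealed estimates of \cite{Marahrens-Otto-13} adapted to the massive periodic setting and careful tracking of the exponential weights through every convolution.
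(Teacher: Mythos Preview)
Your proposal is correct and aligns with the paper's treatment: the paper does not give a self-contained proof of this lemma but simply refers to \cite[Proof of Lemma~3.1]{Gloria-Neukamm-Otto-11} for the quenched pointwise bound \eqref{eq:ptwise-G} and to \cite[Theorem~1]{Marahrens-Otto-13} for the annealed estimates \eqref{eq:annealed-nablaG}--\eqref{eq:annealed-nabla2G}, noting only that the results transfer to the massive periodic setting. Your three-step sketch is precisely an outline of what those references contain --- De~Giorgi--Nash--Moser plus exponential comparison for Step~1, and the vertical-derivative/LSI bootstrap of Marahrens--Otto for Step~2 --- so there is nothing to correct; you have simply unpacked what the paper leaves as a citation.
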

We refer the reader to \cite[Proof of Lemma~3.1]{Gloria-Neukamm-Otto-11} for the pointwise bound on $G_{L,\mu}$,
and to \cite[Theorem~1]{Marahrens-Otto-13} for the annealed bounds (which we have stated here
with the massive term and periodic boundary conditions, the results hold as well on $\Z^d$ and/or with $\mu=0$).

We are now in position to prove Lemma~\ref{lem:moment-4}, which is the most technical part of this article.
\begin{proof}[Proof of Lemma~\ref{lem:moment-4}]
We proceed in four steps.
In the first step we present the general strategy for $k=1$ (in particular, for $d=2$ and $d=3$
it will imply the result for all $k\in \N$).
In the second step we derive a general formula for the vertical derivative $\frac{\partial}{\aa_L(e)} (\nabla\partial_\mu\phi_{L,k,\mu})$. In the third step, we estimate the supremum of this derivative, and we conclude in the fourth and last step.

\medskip

\step{1} Proof of \eqref{eq:phi_L-phi_Lkmu-4} for $k=1$.

\medskip

\substep{1.1} Representation formula for the vertical derivative.

\noindent Differentiating with respect to $\mu$ the equation satisfied by $ \phi_{L,1,\mu}$
\begin{equation}\label{eq:prop7-1.0}
\mu  \phi_{L,1,\mu}-\nabla^*\cdot \aa_L(\xi+\nabla \phi_{L,1,\mu})\,=\,0 \quad \text{ in }\T_L
\end{equation}
yields
\begin{equation}\label{eq:prop7-1.1}
\mu \partial_\mu  \phi_{L,1,\mu}-\nabla^*\cdot \aa_L\nabla \partial_\mu\phi_{L,1,\mu}\,=\,- \phi_{L,1,\mu} \quad \text{ in }\T_L.
\end{equation}
Let $e=(z,z+\ee_i)$ for some $z\in \T_L$ and $i\in \{1,\dots,d\}$. 
The function $ \phi_{L,1,\mu}$ (and therefore  $\partial_\mu \phi_{L,1,\mu}$, in view of \eqref{eq:prop7-1.1})
is differentiable with respect to $\aa_L(e)$ (see for instance \cite[Lemma~2.4]{Gloria-Otto-09}).
Differentiating \eqref{eq:prop7-1.0} with respect to $\aa_L(e)$ yields
\begin{equation*}
\mu \frac{\partial}{\partial \aa_L(e)} \phi_{L,1,\mu}-\nabla^*\cdot \aa_L\nabla \frac{\partial}{\partial \aa_L(e)}\phi_{L,1,\mu}\,=\,\nabla^*_i \Big( (\xi +\nabla\phi_{L,1,\mu})\cdot \ee_i \delta(z-\cdot)\Big)
 \quad \text{ in }\T_L,
\end{equation*}
which we may rewrite by the Green representation formula as
\begin{eqnarray}
\frac{\partial}{\partial \aa_L(e)} \phi_{L,1,\mu}(x)&=&\sum_{y\in\T_L}G_{L,\mu}(x,y)\nabla^*_i \Big( (\xi +\nabla\phi_{L,1,\mu}(y))\cdot \ee_i \delta(z-y)\Big)\nonumber\\
&=&-\sum_{y\in\T_L}\nabla_{y_i} G_{L,\mu}(x,y) \Big( (\xi +\nabla\phi_{L,1,\mu}(y))\cdot \ee_i \delta(z-y)\Big)
\nonumber \\
&=&-\nabla_{z_i} G_{L,\mu}(x,z)(\xi \cdot \ee_i +\nabla_i\phi_{L,1,\mu}(z)).\label{eq:prop7-1.2}
\end{eqnarray}
Likewise, differentiating \eqref{eq:prop7-1.1} with respect to $\aa_L(e)$ yields  
\begin{multline*}
\mu \frac{\partial}{\partial \aa_L(e)}\partial_\mu  \phi_{L,1,\mu}-\nabla^*\cdot \aa_L\nabla \frac{\partial}{\partial \aa_L(e)}\partial_\mu\phi_{L,1,\mu}\\
=\,-\frac{\partial}{\partial \aa_L(e)} \phi_{L,1,\mu}
+\nabla^*_i \Big( \nabla_i \partial_\mu\phi_{L,1,\mu}\delta(z-\cdot)\Big)
 \quad \text{ in }\T_L,
\end{multline*}
so that we have by the Green representation formula and \eqref{eq:prop7-1.2}
\begin{eqnarray}
\lefteqn{\frac{\partial}{\partial \aa_L(e)}\partial_\mu \phi_{L,1,\mu}(x)} \label{eq:prop7-1.3b}
\\
&=&-\sum_{y\in\T_L}G_{L,\mu}(x,y)\frac{\partial}{\partial \aa_L(e)} \phi_{L,1,\mu}(y)
+\sum_{y\in\T_L}G_{L,\mu}(x,y)\nabla^*_i \Big(\nabla_i\partial_\mu\phi_{L,1,\mu}(y))\delta(z-y)\Big)\nonumber \nonumber\\
&=&(\xi\cdot \ee_i +\nabla_i\phi_{L,1,\mu}(z))\sum_{y\in\T_L}G_{L,\mu}(x,y)\nabla_{z_i} G_{L,\mu}(y,z)-\nabla_{z_i} G_{L,\mu}(x,z) \nabla_i\partial_\mu\phi_{L,1,\mu}(z),\nonumber
\end{eqnarray}
which finally yields
\begin{multline}
{\frac{\partial}{\partial \aa_L(e)}\nabla\partial_\mu \phi_{L,1,\mu}(x)}\,=\,-\nabla_x\nabla_{z_i} G_{L,\mu}(x,z)\nabla_i\partial_\mu\phi_{L,1,\mu}(z) \\
+(\xi\cdot \ee_i +\nabla_i\phi_{L,1,\mu}(z))\sum_{y\in\T_L}\nabla_x G_{L,\mu}(x,y)\nabla_{z_i} G_{L,\mu}(y,z)\label{eq:prop7-1.3}.
\end{multline}
In order to use the LSI \eqref{eq:LSI} with $X=\nabla\partial_\mu \phi_{L,1,\mu}(x)$, it remains to bound the supremum of \eqref{eq:prop7-1.3} with respect to $\aa_L(e)$.

\medskip

\substep{1.2} Supremum of the vertical derivatives and proof of
\begin{multline}
\sup_{\aa_L(e)}\Big|\frac{\partial}{\partial \aa_L(e)}\nabla\partial_\mu \phi_{L,1,\mu}(x)\Big|
\,
\lesssim\,
|\nabla\nabla G_{L,\mu}(x,z)||\nabla\partial_\mu\phi_{L,1,\mu}(z)|
\\
+(|\nabla\phi_{L,1,\mu}(z)|+1)\bigg(\sum_{y\in\T_L}|\nabla_x G_{L,\mu}(x,y)||\nabla_{z} G_{L,\mu}(y,z)|
\\
+ |\nabla\nabla G_{L,\mu}(x,z)|\sum_{y\in\T_L}|\nabla_{z} G_{L,\mu}(y,z)|^2\bigg).\label{eq:prop7-1.9}
\end{multline}
%
%
We start with the suprema of Green's functions and claim that for all $e=(z,z+\ee_i)$
and all $x,y\in\T_L$,
\begin{eqnarray}
\sup_{\aa_L(e)}|\nabla_z G_{L,\mu}(z,y)| &\lesssim & |\nabla_z G_{L,\mu}(z,y)|,\label{eq:prop7-1.4}\\
\sup_{\aa_L(e)}|\nabla_z \nabla_y G_{L,\mu}(z,y)| &\lesssim &|\nabla_z \nabla_y G_{L,\mu}(z,y)|\,\lesssim \, 1 ,\label{eq:prop7-1.5}\\
\sup_{\aa_L(e)}|\nabla_x G_{L,\mu}(x,y)| &\lesssim & |\nabla_x G_{L,\mu}(x,y)|+|\nabla_z G_{L,\mu}(z,y)||\nabla_x \nabla_z G_{L,\mu}(x,z)|.\label{eq:prop7-1.6}
\end{eqnarray}
Let $\hat \aa_L$ coincide with $\aa_L$ on $\T_L\setminus \{z\}$, let denote by $\hat G_{L,\mu}$ the
periodic Green function associated with $\hat \aa_L$, and set $\delta G:=G_{L,\mu}-\hat G_{L,\mu}$.
The function $\delta G$ satisfies for all $y\in \T_L$ the equation on $\T_L$
$$
\mu \delta G(x,y)-\nabla^*\cdot \hat \aa_L(x)\nabla \delta G(x,y)\,=\,-\nabla^*\cdot (\hat \aa_L- \aa_L)(x)\nabla G_{L,\mu}(x,y),
$$
which turns, by the Green representation formula, into
\begin{eqnarray*}
\nabla_x \delta G(x,y)&=&\sum_{y'\in \T_L}\nabla_{x} \nabla_{y'}\hat G_{L,\mu}(x,y')\cdot(\hat \aa_L(y')-\aa_L(y'))\nabla_{y'} G_{L,\mu}(y',y)\\
&=&\nabla_{x} \nabla_{z}\hat G_{L,\mu}(x,z)\cdot(\hat \aa_L(z)-\aa_L(z))\nabla_z G_{L,\mu}(z,y).
\end{eqnarray*}
Since $\sup_{\hat \aa_L\in \Omega_L}\sup_{\T_L\times \T_L} |\nabla\nabla\hat G_{L,\mu}|\lesssim 1$ (see for instance \cite[Corollary~2.3]{Gloria-Otto-09}), this implies \eqref{eq:prop7-1.4} by the triangle inequality with the choice $x=z$. This also implies \eqref{eq:prop7-1.6} by the triangle inequality
provided we prove \eqref{eq:prop7-1.5}.
Let $j\in \{1,\dots,d\}$. The function $\nabla_{y_j}\delta G(\cdot,y)$ satisfies the equation
$$
\mu \nabla_{y_j}\delta G(x,y)-\nabla^*\cdot \hat \aa_L(x)\nabla \nabla_{y_j}\delta G(x,y)\,=\,-\nabla^*\cdot (\hat \aa_L- \aa_L)(x)\nabla \nabla_{y_j}G_{L,\mu}(x,y),
$$
which turns, by the Green representation formula, into
\begin{eqnarray*}
\nabla_x \nabla_{y_j}\delta G(x,y)&=&\sum_{y'\in \T_L}\nabla_{x} \nabla_{y'}\hat G_{L,\mu}(x,y')\cdot(\hat \aa_L(y')-\aa_L(y'))\nabla_{y'} \nabla_{y_j}G_{L,\mu}(y',y)\\
&=&\nabla_{x} \nabla_{z}\hat G_{L,\mu}(x,z)\cdot(\hat \aa_L(z)-\aa_L(z))\nabla_z\nabla_{y_j} G_{L,\mu}(z,y),
\end{eqnarray*}
which proves \eqref{eq:prop7-1.5} using $\sup_{\hat\aa_L\in \Omega_L}\sup_{\T_L\times \T_L} |\nabla\nabla\hat G_{L,\mu}|\lesssim 1$ for the choice $x=z$.

\medskip

The estimate \eqref{eq:prop7-1.9} follows from \eqref{eq:prop7-1.4}---\eqref{eq:prop7-1.6}
provided we show that
\begin{eqnarray}
\sup_{\aa_L(e)}|\nabla \phi_{L,1,\mu}(x)| &\lesssim &|\nabla \phi_{L,1,\mu}(x)|+ |\nabla \phi_{L,1,\mu}(z)|,\label{eq:prop7-1.7}
\\
\sup_{\aa_L(e)}|\nabla \partial_\mu\phi_{L,1,\mu}(z)| &\lesssim & |\nabla\partial_\mu\phi_{L,1,\mu}(z)|
\nonumber\\ 
&&+(|\nabla\phi_{L,1,\mu}(z)|+1)\sum_{y\in\T_L}|\nabla_{z} G_{L,\mu}(y,z)|^2.\label{eq:prop7-1.8}
\end{eqnarray}
We start with \eqref{eq:prop7-1.7}. 
Let ${\hat\phi}_{L,1,\mu}$ be the corrector associated with $\hat \aa_L$, and set $\delta \phi:=\phi_{L,1,\mu}-{\hat\phi}_{L,1,\mu}$. The function $\delta \phi$ satisfies the equation
$$
\mu \delta \phi(x)-\nabla^*\cdot \hat \aa_L(x)\nabla \delta \phi(x)\,=\,-\nabla^*\cdot (\hat \aa_L- \aa_L)(x)\nabla \phi_{L,1,\mu}(x),
$$
which yields the a priori estimate
$$
\sum_{x\in \T_L}|\nabla \delta \phi(x)|^2\,\lesssim \, |\nabla \phi_{L,1,\mu}(z)|^2,
$$
and proves  \eqref{eq:prop7-1.7} by the triangle inequality. 

We now turn to the proof of \eqref{eq:prop7-1.8}. Formula \eqref{eq:prop7-1.3} 
for $x=z$, combined with \eqref{eq:prop7-1.7} for $x=z$ and \eqref{eq:prop7-1.4} \& \eqref{eq:prop7-1.5}, 
turns into a differential inequality for the quantity $u:\aa_L(e) \mapsto \nabla \partial_\mu\phi_{L,1,\mu}(z)$ on $[\lambda,1]$:
$$
|u'|\,\lesssim \, |u|+(|\nabla\phi_{L,1,\mu}(z)|+1)\sum_{y\in\T_L}|\nabla_{z} G_{L,\mu}(y,z)|^2,
$$
from which the desired estimate \eqref{eq:prop7-1.8} follows.
The proof of the sensitivity estimate  \eqref{eq:prop7-1.9} is complete.

\medskip

\substep{1.3} Application of the LSI \eqref{eq:LSI} to $X=\nabla\partial_\mu\phi_{L,1,\mu}(0)$.

\noindent We apply \eqref{eq:LSI} to $X=\nabla\partial_\mu\phi_{L,1,\mu}(0)$ for some general $q\ge 1$ and some $\e>0$ to be fixed later, and bound the second RHS term using \eqref{eq:prop7-1.9} and the triangle inequality:
\begin{multline}
\lefteqn{\EL{\Big(\sum_{e\in \B_L}\sup_{\aa_L(e)}\Big|\frac{\partial X}{\partial \aa_L(e)}\Big|^2\Big)^q}^{\frac{1}{2q}}}
\label{eq:prop7-1.10}
\\
\,\lesssim \, \EL{\Big(\sum_{z\in \T_L}   |\nabla\nabla G_{L,\mu}(0,z)|^2|\nabla\partial_\mu\phi_{L,1,\mu}(z)|^2\Big)^q}^{\frac{1}{2q}}+\EL{\Big(\sum_{z\in \T_L}   X_1(z)^2\Big)^q}^{\frac{1}{2q}},
\end{multline}
where we have set
\begin{multline*}
X_1(z)\,:=\,(|\nabla\phi_{L,1,\mu}(z)|+1)\\
\times \bigg(\sum_{y\in\T_L}|\nabla_x G_{L,\mu}(0,y)||\nabla_{z} G_{L,\mu}(y,z)|+ |\nabla\nabla G_{L,\mu}(0,z)|\sum_{y\in\T_L}|\nabla_{z} G_{L,\mu}(y,z)|^2\bigg).
\end{multline*}
We start with the second RHS term.
By the triangle inequality, for all $q\ge 1$,
$$
{\EL{\Big(\sum_{z\in \T_L}   X_1(z)^2\Big)^q}^{\frac{1}{q}}}\,\leq\,  \sum_{z\in \T_L}\EL{X_1(z)^{2q}}^{\frac{1}{q}}.
$$
We focus on the summand. By the H\"older and triangle inequalities,
\begin{eqnarray*}
\lefteqn{\EL{|X_1(z)|^{2q}}^{\frac{1}{2q}}}
\\
&\lesssim &(\EL{|\nabla\phi_{L,1,\mu}|^{4q}}^{\frac{1}{4q}}+1)
\sum_{y\in\T_L} \bigg(\EL{|\nabla_x G_{L,\mu}(0,y)|^{8q}}^{\frac{1}{8q}} \EL{|\nabla_{z} G_{L,\mu}(y,z)|^{8q}}^{\frac{1}{8q}}
\\
&&+ \EL{|\nabla\nabla G_{L,\mu}(0,z)|^{8q}}^{\frac{1}{8q}}\EL{|\nabla_{z} G_{L,\mu}(y,z)|^{16q}}^{\frac{1}{8q}}\bigg).
\end{eqnarray*}
We then appeal to the boundedness of the finite moments of $|\nabla\phi_{L,1,\mu}|$ (cf. \cite[Proposition~1]{Gloria-Neukamm-Otto-14}) and to the
annealed estimates \eqref{eq:annealed-nablaG} and \eqref{eq:annealed-nabla2G} on the Green functions in Lemma~\ref{lem:Green-annealed}, which yields
\begin{eqnarray*}
{\EL{|X_1(z)|^{2q}}^{\frac{1}{2q}}}
&\lesssim& \sum_{y\in\T_L}\Big( \frac{e^{-c\sqrt{\mu}|y|}}{1+|y|^{d-1}} \frac{e^{-c\sqrt{\mu}|y-z|}}{1+|y-z|^{d-1}}+\frac{e^{-c\sqrt{\mu}|z|}}{1+|z|^{d}} \frac{e^{-2c\sqrt{\mu}|y-z|}}{1+|y-z|^{2(d-1)}} \Big)
\\
&\lesssim &
\left\{
\begin{array}{lll}
d=2&:&\mu^{-\frac{1}{4}} \frac{e^{-c \sqrt{\mu} |z|}}{1+|z|^{\frac{1}{2}}}\\
d>2&:& \frac{e^{-c \sqrt{\mu} |z|}}{1+|z|^{d-2}}
\end{array}
\right\}.
\end{eqnarray*}
(For $d=2$ we have used the elementary estimate 
$\frac{e^{-c\sqrt{\mu}|y|}}{1+|y|}\lesssim \mu^{-\frac{1}{8}} \frac{e^{-c\sqrt{\mu}|y|}}{1+|y|^{1+\frac{1}{4}}}$ for $\mu\lesssim 1$.)
We thus obtain for the second RHS term of \eqref{eq:prop7-1.10}:
\begin{eqnarray}
{\EL{\Big(\sum_{z\in \T_L}   X_1(z)^2\Big)^q}^{\frac{1}{q}}}
&{\lesssim}&
\sum_{z\in \T_L}\left\{
\begin{array}{lll}
d=2&:&\mu^{-\frac{1}{2}} \frac{e^{-c \sqrt{\mu} |z|}}{1+|z|}\\
d>2&:& \frac{e^{-c \sqrt{\mu} |z|}}{1+|z|^{2(d-2)}}
\end{array}
\right\}
\nonumber\\
&\lesssim &
\left\{ 
\begin{array}{lll}
d=2&:&\mu^{-1} ,\\
d=3&:& \mu^{-\frac{1}{2}} ,\\
d=4&:&|\log \mu|,\\
d>4&:&1.
\end{array}
\right\}\label{eq:prop7-1.10a}
\end{eqnarray}

We now turn to the first RHS term of \eqref{eq:prop7-1.10} and apply H\"older's inequality to
\begin{multline*}
|\nabla\nabla G_{L,\mu}(0,z)|^2|\nabla\partial_\mu\phi_{L,1,\mu}(z)|^2\\
=\, \Big(|\nabla\nabla G_{L,\mu}(0,z)|^{\frac{2(q-1)}{q}}\Big)\Big(|\nabla\nabla G_{L,\mu}(0,z)|^{\frac{2}{q}}|\nabla\partial_\mu\phi_{L,1,\mu}(z)|^2\Big)
\end{multline*}
with exponents $(\frac{q}{q-1},q)$:
\begin{multline*}
\EL{\Big(\sum_{z\in \T_L}   |\nabla\nabla G_{L,\mu}(0,z)|^2|\nabla\partial_\mu\phi_{L,1,\mu}(z)|^2\Big)^q}^{\frac{1}{2q}}\\
\leq \, \EL{\Big(\sum_{z\in \T_L}   |\nabla\nabla G_{L,\mu}(0,z)|^{2}\Big)^{q-1}\Big(\sum_{z\in \T_L}   |\nabla\nabla G_{L,\mu}(0,z)|^2|\nabla\partial_\mu\phi_{L,1,\mu}(z)|^{2q}\Big)}^{\frac{1}{2q}}.
\end{multline*}
An elementary energy estimate on $z\mapsto \nabla_{x_i} G_{L,\mu}(0,z)$ for $i=1,\dots,d$ yields
\begin{equation}\label{eq:prop7-1.11}
\sum_{z\in \T_L}   |\nabla\nabla G_{L,\mu}(0,z)|^2\,\lesssim \,1,
\end{equation}
so that this estimate turns by stationarity into
\begin{eqnarray}
\lefteqn{\EL{\Big(\sum_{z\in \T_L}   |\nabla\nabla G_{L,\mu}(0,z)|^2|\nabla\partial_\mu\phi_{L,1,\mu}(z)|^2\Big)^q}^{\frac{1}{2q}}}\nonumber \\
&\stackrel{\eqref{eq:prop7-1.11}}{\lesssim}& \EL{\sum_{z\in \T_L}   |\nabla\nabla G_{L,\mu}(0,z)|^2|\nabla\partial_\mu\phi_{L,1,\mu}(z)|^{2q}}^{\frac{1}{2q}}\nonumber\\
&=& \EL{|\nabla\partial_\mu\phi_{L,1,\mu}(0)|^{2q}\sum_{z\in \T_L}   |\nabla\nabla G_{L,\mu}(-z,0)|^2}^{\frac{1}{2q}}\nonumber\\
&\stackrel{\eqref{eq:prop7-1.11}}{\lesssim}&\EL{|\nabla\partial_\mu\phi_{L,1,\mu}(0)|^{2q}}.\label{eq:prop7-1.12}
\end{eqnarray}

\medskip

\substep{1.4} Proof of  \eqref{eq:phi_L-phi_Lkmu-4}  for $k=1$.

\noindent 
We are now in position to conclude the proof of this step.
The combination of \eqref{eq:LSI} with \eqref{eq:prop7-1.10},  \eqref{eq:prop7-1.10a}, and \eqref{eq:prop7-1.12},  shows there exists some constant $C(q)<\infty$ such that for all $\e>0$:
\begin{multline*}
\EL{|\nabla\partial_\mu\phi_{L,1,\mu}|^{2q}}^\frac{1}{2q}\,\leq \, C(q,\e)\EL{|\nabla\partial_\mu\phi_{L,1,\mu}|^2}^{\frac{1}{2}}+\e C(q)\EL{|\nabla\partial_\mu\phi_{L,1,\mu}|^{2q}}^{\frac{1}{2q}} \\ +\e C(q)\left\{
\begin{array}{lll}
d=2&:&\mu^{-\frac{1}{2}},\\
d=3&:&\mu^{-\frac{1}{4}},\\
d=4&:&|\log \mu|^{\frac{1}{2}},\\
d>4&:&1.
\end{array}
\right\} .
\end{multline*}
We then choose $\e>0$ small enough so that we may absorb the second RHS term in the LHS.   Lemma~\ref{lem:moment-4}  for $k=1$ then follows from Lemma~\ref{lem:moment-2}.
By definition of the Richardson extrapolation, this also yields by the triangle inequality for all $k\in \N$ and all $q\ge 1$,
$$
\EL{|\nabla\partial_\mu\phi_{L,k,\mu}|^{2q}}^\frac{1}{2q}\,\lesssim \, \left\{\begin{array}{lll}
d=2&:&\mu^{-\frac{1}{2}},\\
d=3&:&\mu^{-\frac{1}{4}},\\
d=4&:&|\log \mu|^{\frac{1}{2}},\\
d>4&:&1.
\end{array}
\right\} 
$$
which, in the case $k>1$, is only optimal for $d=2,3$.
Note that combined with \eqref{eq:prop7-1.8}, the moment bounds of \cite[Proposition~1]{Gloria-Neukamm-Otto-14}, and the annealed estimate \eqref{eq:annealed-nablaG}, this also yields
$$
\EL{\sup_{\aa_L(e)}|\nabla\partial_\mu\phi_{L,k,\mu}|^{2q}}^\frac{1}{2q}\,\lesssim \, \left\{\begin{array}{lll}
d=2&:&\mu^{-\frac{1}{2}},\\
d=3&:&\mu^{-\frac{1}{4}},\\
d=4&:&|\log \mu|^{\frac{1}{2}},\\
d>4&:&1,
\end{array}
\right\} 
$$
which we will use in the sequel.
It remains to prove \eqref{eq:phi_L-phi_Lkmu-4} for $k\ge 2$. We proceed by induction. The rest of the proof
follows the same strategy as in Step~1.
Since the result is already proved for $d=2$, we shall now assume that $d>2$ so that
we do not have to use bounds on the Green function itself for $d=2$.

\medskip

\step{2} Representation formula for the vertical derivative of $\nabla \partial_\mu \phi_{L,k,\mu}$: for all $k,L\in \N$, $\mu=\mu_0> 0$, $x\in \T_L$, and $e=(z,z+\ee_i)\in \B_L$,
\begin{eqnarray}
\lefteqn{\frac{\partial}{\partial \aa_L(e)}\nabla \partial_\mu\phi_{L,k,\mu}(x)} 
\nonumber \\
&=&-\nabla_x \nabla_{z_i} G_{L,\mu}(x,z)\nabla_i \partial_\mu \phi_{L,k,\mu}(z) 
\nonumber \\
&&- \sum_{j=1}^{k-1} \bigg(\int_{\frac{\mu_{0}}{2}}^{\mu_{0}}\dots\int_{\frac{\mu_{j-1}}{2}}^{\mu_{j-1}}\mu_1\dots\mu_j \sum_{y_1\in \T_L}\dots \sum_{y_{j}\in \T_L} 
\nonumber \\
&&\qquad \qquad \times \nabla_xG_{L,\mu_1}(x,y_1)G_{L,\mu_2}(y_1,y_2)\dots G_{L,\mu_{j}}(y_{j-1},y_{j})
\nonumber \\
&&
\qquad\qquad \qquad \times \nabla_{z_i}G_{L,\mu_{j}}(y_{j},z)\nabla_i \partial_{\mu_j}\phi_{L,k-j,\mu_{j}}(z)
d\alpha_{k-j+1}(\mu_{j};\mu_{j-1})\dots d\alpha_{k}(\mu_{1};\mu_{0})
\bigg) 
\nonumber \\
&&+\int_{\frac{\mu_{0}}{2}}^{\mu_{0}}\dots\int_{\frac{\mu_{k-2}}{2}}^{\mu_{k-2}}\mu_1\dots\mu_{k-1} \sum_{y_1\in \T_L}\dots \sum_{y_{k}\in \T_L} 
\nonumber \\
&&\qquad \qquad \times \nabla_xG_{L,\mu_1}(x,y_1)G_{L,\mu_2}(y_1,y_2)\dots G_{L,\mu_{k-1}}(y_{k-2},y_{k-1})
\nonumber \\
&&
\qquad\qquad \qquad \times G_{L,\mu_{k-1}}(y_{k-1},y_{k})\nabla_{z_i}G_{L,\mu_{k-1}}(y_{k},z) (\xi \cdot \ee_i+\nabla_i \phi_{L,1,\mu_{k-1}}(z))
\nonumber \\
&&\qquad\qquad \qquad\qquad \times d\alpha_{2}(\mu_{k-1};\mu_{k-2})\dots d\alpha_{k}(\mu_{1};\mu_{0}),
\label{eq:prop7-2.1}
\end{eqnarray}
where $d\alpha_{j}(\cdot;\mu)$ is a positive measure on the interval 
$(\frac{\mu}{2},\mu)$ of total mass bounded by $2$.
This formula directly follows from the corresponding formula for $\frac{\partial}{\partial \aa_L(e)} \partial_\mu\phi_{L,k,\mu}(x)$:
\begin{eqnarray}
\lefteqn{\frac{\partial}{\partial \aa_L(e)}\partial_\mu\phi_{L,k,\mu}(x)} 
\nonumber \\
&=&- \nabla_{z_i} G_{L,\mu}(x,z)\nabla_i \partial_\mu \phi_{L,k,\mu}(z) 
\nonumber \\
&&- \sum_{j=1}^{k-1} \bigg(\int_{\frac{\mu_{0}}{2}}^{\mu_{0}}\dots\int_{\frac{\mu_{j-1}}{2}}^{\mu_{j-1}}\mu_1\dots\mu_j \sum_{y_1\in \T_L}\dots \sum_{y_{j}\in \T_L} 
\nonumber \\
&&\qquad \qquad \times G_{L,\mu_1}(x,y_1)G_{L,\mu_2}(y_1,y_2)\dots G_{L,\mu_{j}}(y_{j-1},y_{j})
\nonumber \\
&&
\qquad\qquad \qquad \times \nabla_{z_i}G_{L,\mu_{j}}(y_{j},z)\nabla_i \partial_{\mu_j}\phi_{L,k-j,\mu_{j}}(z)
d\alpha_{k-j+1}(\mu_{j};\mu_{j-1})\dots d\alpha_{k}(\mu_{1};\mu_{0})
\bigg) 
\nonumber \\
&&+\int_{\frac{\mu_{0}}{2}}^{\mu_{0}}\dots\int_{\frac{\mu_{k-2}}{2}}^{\mu_{k-2}}\mu_1\dots\mu_{k-1} \sum_{y_1\in \T_L}\dots \sum_{y_{k}\in \T_L} 
\nonumber \\
&&\qquad \qquad \times G_{L,\mu_1}(x,y_1)G_{L,\mu_2}(y_1,y_2)\dots G_{L,\mu_{k-1}}(y_{k-2},y_{k-1})
\nonumber \\
&&
\qquad\qquad \qquad \times G_{L,\mu_{k-1}}(y_{k-1},y_{k})\nabla_{z_i}G_{L,\mu_{k-1}}(y_{k},z) (\xi\cdot \ee_i+\nabla_i \phi_{L,1,\mu_{k-1}}(z))
\nonumber \\
&&\qquad\qquad \qquad\qquad \times d\alpha_{2}(\mu_{k-1};\mu_{k-2})\dots d\alpha_{k}(\mu_{1};\mu_{0}).
\label{eq:prop7-2.4}
\end{eqnarray}
\noindent By induction, it holds that
\begin{equation*}
\mu \phi_{L,k+1,\mu}-\nabla^*\cdot \aa_L(\xi+\nabla \phi_{L,k+1,\mu})\,=\,\mu \phi_{L,k,\mu}.
\end{equation*}
Differentiating this equation with respect to $\mu$ yields
\begin{equation}\label{eq:prop7-2.2}
\mu \partial_\mu\phi_{L,k+1,\mu}-\nabla^*\cdot \aa_L\nabla \partial_\mu\phi_{L,k+1,\mu}\,=\,\mu \partial_\mu\phi_{L,k,\mu}+(\phi_{L,k,\mu}-\phi_{L,k+1,\mu}).
\end{equation}
By definition \eqref{eq:Richardson1} of the Richardson extrapolation we rewrite the second RHS term as
\begin{eqnarray*}
\phi_{L,k,\mu}-\phi_{L,k+1,\mu}&=&\frac{1}{2^k-1}((2^k-1)\phi_{L,k,\mu}-2^k \phi_{L,k,\frac{\mu}{2}}+\phi_{L,k,\mu}) \\
&=&\frac{2^k}{2^k-1}(\phi_{L,k,\mu}-\phi_{L,k,\frac{\mu}{2}})
\\
&=&\frac{2^k}{2^k-1} \int_{\frac{\mu}{2}}^\mu \partial_{\mu_1}\phi_{L,k,\mu_1}d\mu_1,
\end{eqnarray*}
so that \eqref{eq:prop7-2.2} turns into
\begin{equation}\label{eq:prop7-2.3}
\mu \partial_\mu\phi_{L,k+1,\mu}-\nabla^*\cdot \aa_L\nabla \partial_\mu\phi_{L,k+1,\mu}\,=\,
\mu \int_{\frac{\mu}{2}}^\mu \partial_{\mu_1}\phi_{L,k,\mu_1}d\alpha_k(\mu_1;\mu),
\end{equation}
where $d\alpha_k(\mu_1;\mu):=\frac{2^k}{\mu(2^k-1)} d\mu_1+\delta(\mu_1-\mu)$, which satisfies
as claimed $\int_{\frac{\mu}{2}}^\mu d\alpha_k(\mu_1;\mu)\,=\,\frac{2^{k-1}}{2^k-1}+1\le 2$.
Next we differentiate \eqref{eq:prop7-2.3} with respect to $\aa_L(e)$ and obtain
\begin{multline*}
\mu \frac{\partial}{\aa_L(e)}\partial_\mu\phi_{L,k+1,\mu}-\nabla^*\cdot \aa_L\nabla \frac{\partial}{\aa_L(e)}\partial_\mu\phi_{L,k+1,\mu}\,=\,\mu
\int_{\frac{\mu}{2}}^\mu \frac{\partial}{\aa_L(e)}\partial_{\mu_1}\phi_{L,k,\mu_1}d\alpha_k(\mu_1;\mu)
\\
+\nabla^*_i \Big( \nabla_i\partial_\mu \phi_{L,k+1,\mu} \delta(z-\cdot)\Big)
 \quad \text{ in }\T_L.
\end{multline*}
This yields the Green representation formula
\begin{multline*}
\frac{\partial}{\aa_L(e)}\partial_\mu\phi_{L,k+1,\mu}(x)\,=\,-\nabla_{z_i}G_{L,\mu}(x,z)\nabla_i \partial_\mu \phi_{L,k+1,\mu}(z) \\
+\sum_{y \in \T_L} G_{L,\mu}(x,y)\mu\int_{\frac{\mu}{2}}^\mu \frac{\partial}{\aa_L(e)}\partial_{\mu_1}\phi_{L,k,\mu_1}(y)
d\alpha_k(\mu_1;\mu),
\end{multline*}
from which \eqref{eq:prop7-2.1} follows by induction starting with \eqref{eq:prop7-1.3b} for $k=1$.

\medskip

\step{3} Supremum of the vertical derivative of $\nabla \partial_\mu  \phi_{L,k,\mu}$: for all $k,L\in \N$, $\mu=\mu_0> 0$,
$x\in \B_L$, 
and $e=(z,z+\ee_i)\in \B_L$,
\begin{eqnarray}
\lefteqn{\sup_{\aa_L(e)}\Big|\frac{\partial}{\partial \aa_L(e)}\nabla\partial_\mu\phi_{L,k,\mu}(x)\Big|} 
\nonumber\\
&=&|\nabla_x \nabla_{z_i} G_{L,\mu}(x,z)||\nabla \partial_\mu \phi_{L,k,\mu}(z)| 
\nonumber\\
&&+ \sum_{j=1}^{k-1} \bigg(\int_{\frac{\mu_{0}}{2}}^{\mu_{0}}\dots\int_{\frac{\mu_{j-1}}{2}}^{\mu_{j-1}}\mu_1\dots\mu_j \sum_{y_1\in \T_L}\dots \sum_{y_{j}\in \T_L}
\nonumber\\
&&\quad \qquad \times \Big(|\nabla_xG_{L,\mu_1}(x,y_1)|+|\nabla_z G_{L,\mu_1}(y_1,z)|\big(|\nabla\nabla G_{L,\mu_1}(x,z)|+|\nabla\nabla G_{L,\mu}(x,z)|\big)\Big)
\nonumber\\
&&
\quad\qquad\qquad \times g_\mu(|y_1-y_2|)\dots g_{\mu}(|y_{j-1}-y_{j}|) |\nabla_{z}G_{L,\mu_{j}}(y_{j},z)| \sup_{\aa_L(e)}|\nabla_i \partial_{\mu_j}\phi_{L,k-j,\mu_{j}}(z)|
\nonumber\\
&&
\quad\qquad\qquad \qquad\times d\alpha_{k-j+1}(\mu_{j};\mu_{j-1})\dots d\alpha_{k}(\mu_{1};\mu_{0})
\bigg) 
\nonumber\\
&&+\int_{\frac{\mu_{0}}{2}}^{\mu_{0}}\dots\int_{\frac{\mu_{k-2}}{2}}^{\mu_{k-2}}\mu_1\dots\mu_{k-1} \sum_{y_1\in \T_L}\dots \sum_{y_{k}\in \T_L} 
\nonumber\\
&&\qquad  \times \Big(|\nabla_xG_{L,\mu_1}(x,y_1)|+|\nabla_z G_{L,\mu_1}(y_1,z)|\big(|\nabla\nabla G_{L,\mu_1}(x,z)|+|\nabla\nabla G_{L,\mu}(x,z)|\big)\Big)
\nonumber\\
&&
\qquad\qquad \times g_{\mu}(|y_1-y_2|)\dots g_{\mu}(|y_{k-1}-y_{k}|)|\nabla_{z}G_{L,\mu_{k-1}}(y_{k},z)| (1+|\nabla \phi_{L,1,\mu_{k-1}}(z)|)
\nonumber\\
&&\qquad\qquad \qquad\qquad \times d\alpha_{2}(\mu_{k-1};\mu_{k-2})\dots d\alpha_{k}(\mu_{1};\mu_{0}),
\label{eq:prop7-3.1}
\end{eqnarray}
where $g_\mu:\R^+\to \R^+$ is defined for $d>2$ by
\begin{equation}\label{eq:gmu}
g_\mu(t)\,:=\,\frac{e^{-c \sqrt{\mu} t}}{1+t^{d-2}}
\end{equation}
for some $c>0$ depending
only on $k$, $\lambda$ and $d$.

\noindent Set
\begin{eqnarray*}
\alpha&:=&\sum_{j=1}^{k-1} \bigg(\int_{\frac{\mu_{0}}{2}}^{\mu_{0}}\dots\int_{\frac{\mu_{j-1}}{2}}^{\mu_{j-1}}\mu_1\dots\mu_j \sum_{y_1\in \T_L}\dots \sum_{y_{j}\in \T_L}
\nonumber\\
&& \qquad \times \Big(|\nabla_xG_{L,\mu_1}(x,y_1)|+|\nabla_z G_{L,\mu_1}(y_1,z)|\big(|\nabla\nabla G_{L,\mu_1}(x,z)|+|\nabla\nabla G_{L,\mu}(x,z)|\big)\Big)
\nonumber\\
&&
\quad\qquad \times g_\mu(|y_1-y_2|)\dots g_{\mu}(|y_{j-1}-y_{j}|) |\nabla_{z}G_{L,\mu_{j}}(y_{j},z)| \sup_{\aa_L(e)}|\nabla_i \partial_{\mu_j}\phi_{L,k-j,\mu_{j}}(z)|
\nonumber\\
&&
\quad\qquad \qquad\times d\alpha_{k-j+1}(\mu_{j};\mu_{j-1})\dots d\alpha_{k}(\mu_{1};\mu_{0})
\bigg) 
\nonumber\\
\beta&:=&\int_{\frac{\mu_{0}}{2}}^{\mu_{0}}\dots\int_{\frac{\mu_{k-2}}{2}}^{\mu_{k-2}}\mu_1\dots\mu_{k-1} \sum_{y_1\in \T_L}\dots \sum_{y_{k}\in \T_L} 
\nonumber\\
&&\quad  \times \Big(|\nabla_xG_{L,\mu_1}(x,y_1)|+|\nabla_z G_{L,\mu_1}(y_1,z)|\big(|\nabla\nabla G_{L,\mu_1}(x,z)|+|\nabla\nabla G_{L,\mu}(x,z)|\big)\Big)
\nonumber\\
&&
\quad\qquad \times g_{\mu}(|y_1-y_2|)\dots g_{\mu}(|y_{k-1}-y_{k}|)|\nabla_{z}G_{L,\mu_{k-1}}(y_{k},z)| (1+|\nabla \phi_{L,1,\mu_{k-1}}(z)|)
\nonumber\\
&&\quad\qquad \qquad\qquad \times d\alpha_{2}(\mu_{k-1};\mu_{k-2})\dots d\alpha_{k}(\mu_{1};\mu_{0}),
\end{eqnarray*}
so that by  \eqref{eq:prop7-1.4}---\eqref{eq:prop7-1.7}, we have 
$\sup_{\aa_L(e)}\alpha \lesssim \alpha$ and $\sup_{\aa_L(e)}\beta\lesssim \beta$.
Then, by \eqref{eq:ptwise-G} in Lemma~\ref{lem:Green-annealed}, 
\eqref{eq:prop7-2.1} turns into
\begin{equation}\label{eq:prop7-3.2}
\Big|\frac{\partial}{\partial \aa_L(e)}\nabla\partial_\mu\phi_{L,k,\mu}(x)\Big|
\,\lesssim \, |\nabla_x \nabla_{z_i} G_{L,\mu}(x,z)||\nabla_i \partial_\mu \phi_{L,k,\mu}(z)| +\alpha +\beta.
\end{equation}
For $x=z$, using  \eqref{eq:prop7-1.5} in the form of $\sup |\nabla \nabla G| \lesssim 1$, this yields a differential inequality for $\aa_L(e)\mapsto \nabla\partial_\mu\phi_{L,k,\mu}(z)$, from which we infer that
\begin{equation}\label{eq:prop7-3.3}
\sup_{\aa_L(e)}|\nabla\partial_\mu\phi_{L,k,\mu}(z)|\,\lesssim \,|\nabla\partial_\mu\phi_{L,k,\mu}(z)|+
\alpha+\beta.
\end{equation}
The combination of \eqref{eq:prop7-3.2} and \eqref{eq:prop7-3.3} yields the desired estimate \eqref{eq:prop7-3.1}.

\medskip

\step{4} Proof of \eqref{eq:phi_L-phi_Lkmu-4} by induction.

\noindent The induction assumption at step $k\in \N$ reads: for all $q\ge 1$,
\begin{equation}\label{eq:prop7-4.1}
\EL{\sup_{\aa_L(0)}|\nabla \partial_\mu  \phi_{L,k,\mu}(0)|^{2q}}^{\frac{1}{2q}}
\,\lesssim \, 
\left\{
\begin{array}{lll}
k<\frac{d}{4}&:&\mu^{k-1},\\
k=\frac{d}{4}&:&\mu^{\frac{d}{4}-1}|\log \mu|^{\frac{1}{2}},\\
k>\frac{d}{4}&:&\mu^{\frac{d}{4}-1},
\end{array}
\right\}
\end{equation}
which directly implies the claim by discarding the supremum in the expectation.

\noindent For $k=1$, \eqref{eq:prop7-4.1} is a consequence of \eqref{eq:prop7-1.9} and \eqref{eq:phi_L-phi_Lkmu-4} (which we proved in Step~1 for $k=1$), since for all $q\ge 1$, $\sup_{\aa_L\in \Omega_L}|\nabla\nabla G_{L,\mu}(0,0)|\,\lesssim\,1$, $\EL{|\nabla \phi_{L,1,\mu}|^q}^\frac{1}{q}\,\lesssim\,1$ uniformly with respect to $\mu\ge 0$, and 
$$
\EL{\Big(\sum_{y\in\T_L}|\nabla G_{L,\mu}(0,y)|^2\Big)^q}^{\frac{1}{q}}\,\lesssim\, 
\left\{
\begin{array}{lll}
d=2&:&|\log \mu|,\\
d>2&:&1.
\end{array}
\right\}
$$

Assume now that \eqref{eq:prop7-4.1} holds at step~$k\in \N$.
From the logarithmic-Sobolev inequality in the form of \eqref{eq:LSI} and the sensitivity estimate \eqref{eq:prop7-3.1},
we learn that $\EL{|\nabla \phi_{L,k+1,\mu}|^{2q}}^{\frac{1}{2q}}$ is bounded by the sum of $k+3$ terms:
the second moment $\EL{|\nabla \phi_{L,k+1,\mu}|^{2}}^{\frac{1}{2}}$ which is controlled by Lemma~\ref{lem:moment-2},
the nonlinear term 
$$C(q)\e \EL{\Big(\sum_{z\in \T_L}|\nabla \nabla G_{L,\mu}(0,z)|^2 |\nabla \phi_{L,k+1,\mu}(z)|^{2}\Big)^q}^{\frac{1}{2q}},$$
which we absorb in the LHS for $\e$ small enough (arguing as for \eqref{eq:prop7-1.12}), and $k+1$ linear terms.
We start with the estimate of the last linear term, which involves $\nabla \phi_{L,1,\mu_{k}}$.
In addition to $g_\mu$ (cf. \eqref{eq:gmu}), we define $h_\mu,\gamma_\mu:\R^+\to \R^+$ by
\begin{equation*}
h_\mu(t)\,:=\,\frac{e^{-c\sqrt{\mu}t}}{1+t^{d-1}},\qquad \gamma_\mu(t)\,:=\,\frac{e^{-c\sqrt{\mu}t}}{1+t^{d}}.
\end{equation*}
Since all the finite moments of $\nabla \phi_{L,1,\mu_{k}}$ are bounded and the measures $d\alpha_{k-j}$ have
mass of order one, we have by H\"older's inequality, 
and the annealed estimates \eqref{eq:annealed-nablaG} and \eqref{eq:annealed-nabla2G}:
\begin{eqnarray*}
\lefteqn{\mathbb{E}_L\bigg[\bigg(
\sum_{z\in \T_L}\Big(
\int_{\frac{\mu_{0}}{2}}^{\mu_{0}}\dots\int_{\frac{\mu_{k-1}}{2}}^{\mu_{k-1}}\mu_1\dots\mu_{k} \sum_{y_1\in \T_L}\dots \sum_{y_{k+1}\in \T_L} }
\nonumber\\
&& \times \Big(|\nabla_xG_{L,\mu_1}(x,y_1)|+|\nabla_z G_{L,\mu_1}(y_1,z)|\big(|\nabla\nabla G_{L,\mu_1}(x,z)|+|\nabla\nabla G_{L,\mu}(x,z)|\big)\Big)
\nonumber\\
&&
\qquad \times g_{\mu}(|y_1-y_2|)\dots g_{\mu}(|y_{k}-y_{k+1}|)|\nabla_{z}G_{L,\mu_{k}}(y_{k+1},z)| (1+|\nabla \phi_{L,1,\mu_{k}}(z)|)
\nonumber\\
&&\qquad \times d\alpha_{2}(\mu_{k};\mu_{k-1})\dots d\alpha_{k+1}(\mu_{1};\mu_{0})\Big)^2\bigg)^q
\bigg]^\frac{1}{2q}
\\
&\lesssim & \mu^{k-1}\bigg(\sum_{z,y_1,\dots,y_{k+1},y_1'\dots,y_{k+1}'\in \T_L} (h_{\mu}(|y_1|)+h_{\mu}(|y_1-z|) \gamma_{\mu}(|z|))\\
&&\quad \times(h_{\mu}(|y_1'|)+h_{\mu}(|y_1'-z|)\gamma_{\mu}(|z|)) g_{\mu}(y_1-y_2)\dots g_{\mu}(y_{k-1}-y_{k})g_{\mu}(y_{k}-y_{k+1})\\
&&\qquad\times g_{\mu}(|y_1'-y_2'|)\dots g_{\mu}(|y_{k-1}'-y_{k}'|)g_{\mu}(|y_{k}'-y_{k+1}'|)h_{\mu}(|y_{k+1}-z|)
h_{\mu}(|y_{k+1}'-z|)\bigg)^{\frac{1}{2}}.
\end{eqnarray*}
Since the functions $g_\mu$, $h_\mu$ and $\gamma_\mu$ are bounded pointwise for $d>2$, we may directly estimate this
$(2k+3)$-ple sum (by comparison to the corresponding integrals), which yields the desired RHS of \eqref{eq:prop7-4.1}.

We now estimate the $k$ remaining linear terms, which involve $\{\nabla  \partial_\mu \phi_{L,j,\mu}\}_{j=1,\dots,k}$. 
We proceed as above and use in addition the induction assumption up to step~$k$ in the suboptimal form
for $d>2$, for all $1\le j\le k$ and $q\ge 1$:
$$
\EL{\sup_{\aa_L(0)}|\nabla \partial_\mu \phi_{L,j,\mu}(0)|^{2q}}^\frac{1}{2q}\,\lesssim\, \mu^{-\frac{1}{4}},
$$
which follows from bounding the RHS of \eqref{eq:prop7-4.1} for $d=3$.
This yields for all $1\le j\le k$,
\begin{eqnarray*}
\lefteqn{\mathbb{E}_L\bigg[\bigg(\sum_{z\in \T_L} \bigg(\int_{\frac{\mu_{0}}{2}}^{\mu_{0}}\dots\int_{\frac{\mu_{j-1}}{2}}^{\mu_{j-1}}\mu_1\dots\mu_j \sum_{y_1\in \T_L}\dots \sum_{y_{j}\in \T_L}}
\nonumber\\
&& \qquad \times \Big(|\nabla_xG_{L,\mu_1}(x,y_1)|+|\nabla_z G_{L,\mu_1}(y_1,z)|\big(|\nabla\nabla G_{L,\mu_1}(x,z)|+|\nabla\nabla G_{L,\mu}(x,z)|\big)\Big)
\nonumber\\
&&
\qquad\qquad \times g_\mu(|y_1-y_2|)\dots g_{\mu}(|y_{j-1}-y_{j}|) |\nabla_{z}G_{L,\mu_{j}}(y_{j},z)| \sup_{\aa_L(e)}|\nabla_i \partial_{\mu_j}\phi_{L,k-j,\mu_{j}}(z)|
\nonumber\\
&&
\qquad\qquad \qquad\times d\alpha_{k-j}(\mu_{j};\mu_{j-1})\dots d\alpha_{k+1}(\mu_{1};\mu_{0})
\bigg) ^2\bigg)^q\bigg]^{\frac{1}{2q}}
\\
&\lesssim & \mu^{\frac{d}{4}-\frac{1}{4}} \bigg(\mu^{2j-\frac{d}{2}}\sum_{z,y_1,\dots,y_j,y_1',\dots,y_j'\in \T_L} (h_\mu(|y_1|)+h_{\mu}(|y_1-z|)\gamma_\mu(|x-z|))
\nonumber\\
&&\qquad \times(h_\mu(|y_1'|)+h_{\mu}(|y_1'-z|)\gamma_\mu(|z|))
g_\mu(|y_1-y_2|)\dots g_{\mu}(|y_{j-1}-y_{j}|) h_\mu(|y_{j}-z|)\bigg)^{\frac{1}{2}} \\
&\lesssim & \mu^{\frac{d}{4}-\frac{1}{4}},
\end{eqnarray*}
by a direct comparison of the $(2j+1)$-ple sum for $d>2$ to integrals.
This estimate is of higher order than the RHS of \eqref{eq:prop7-4.1} and holds for all $1\le j\le k$.
This proves that 
\begin{equation*}
\EL{|\nabla \partial_\mu  \phi_{L,k,\mu}(0)|^{2q}}^{\frac{1}{2q}}
\,\lesssim \, 
\left\{
\begin{array}{lll}
k<\frac{d}{4}&:&\mu^{k-1},\\
k=\frac{d}{4}&:&\mu^{\frac{d}{4}-1}|\log \mu|^{\frac{1}{2}},\\
k>\frac{d}{4}&:&\mu^{\frac{d}{4}-1}.
\end{array}
\right\}
\end{equation*}
To prove the same bound on the supremum \eqref{eq:prop7-4.1}, we appeal to \eqref{eq:prop7-3.3}, and bound
the terms $\alpha$ and $\beta$ as above using in addition the induction assumption.

\end{proof}

\subsection{Proof of Proposition~\ref{prop:var-resc-mu}}

We start by proving that \eqref{eq:formal-sigma-mu} and the associated Richardson extrapolation variants are well-defined.
\begin{lemma}\label{lem:conv-series}
For all $k\in \N$ and $\mu>0$, let $\phi_{k,\mu}$ be  as in Definition~\ref{def:Richardson}.
Then, $\sigma_{k,\mu}^2\in \R^+$ is well-defined as the following sum:
\begin{equation}\label{eq:conv-series}
\sigma_{k,\mu}^2 \,=\,\sum_{x\in \Z^d} \cov{(\xi+\nabla \phi_{k,\mu})\cdot \aa(\xi+\nabla \phi_{k,\mu})(x)}{(\xi+\nabla \phi_{k,\mu})\cdot \aa(\xi+\nabla \phi_{k,\mu})(0)}.
\end{equation}
\end{lemma}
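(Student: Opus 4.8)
The plan is to show that the covariances appearing in \eqref{eq:conv-series} are summable, which then makes the sum well-defined, and that the resulting quantity is non-negative (being the limit, or a finite-$\mu$ analogue, of variances of spatial averages). The crucial point is that for fixed $\mu>0$ the modified corrector $\phi_{k,\mu}$ depends on the environment $\aa$ in a way that decays \emph{exponentially} on the scale $\mu^{-1/2}$, unlike $\phi$ itself whose dependence decays only polynomially. Concretely, I would fix the random field $Y(z) := (\xi+\nabla\phi_{k,\mu})\cdot\aa(\xi+\nabla\phi_{k,\mu})(z)$, a stationary field with bounded moments of every order (by the moment bounds on $\nabla\phi_{L,k,\mu}$ from Lemma~\ref{lem:moment-4} and their $L\to\infty$ limits, together with Definition~\ref{def:Richardson} and the analyticity of $\mu\mapsto\phi_{k,\mu}$), and estimate $\cov{Y(z)}{Y(0)}$ via the covariance inequality of Lemma~\ref{lem:covar} (the Efron--Stein-type estimate for product measures).

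The key step is the sensitivity (vertical-derivative) calculus: I would differentiate the massive corrector equation \eqref{eq:corrector-eq-modif} with respect to $\aa(e)$ for $e=(w,w+\ee_i)$, obtaining, via the Green representation formula with the massive Green function $G_\mu$ on $\Z^d$, a bound of the form
\[
\Big|\frac{\partial}{\partial\aa(e)}\big(\xi+\nabla\phi_{k,\mu}\big)(z)\Big| \,\lesssim\, |\nabla\nabla G_{\mu}(z,w)|\,\big(1+|\nabla\phi_{k,\mu}(w)|\big)\,+\,(\text{Richardson corrections}),
\]
where the corrections, coming from \eqref{eq:Richardson1} as in Step~2 of the proof of Lemma~\ref{lem:moment-4}, are convolutions of Green functions each carrying a factor $\mu_j$ and an exponential weight $e^{-c\sqrt{\mu}|\cdot|}$. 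Using the annealed bounds \eqref{eq:annealed-nablaG}--\eqref{eq:annealed-nabla2G} on $\Z^d$ (valid also with the mass, cf.\ the remark after Lemma~\ref{lem:Green-annealed}) and the pointwise bound \eqref{eq:ptwise-G} for $d>2$ (and the analogous logarithmic bound for $d=2$), one gets
\[
\Big(\sum_{e}\,\EL{\sup_{\aa(e)}\Big|\frac{\partial Y(z)}{\partial\aa(e)}\Big|^2}^{1/2}\,\EL{\sup_{\aa(e)}\Big|\frac{\partial Y(0)}{\partial\aa(e)}\Big|^2}^{1/2}\Big)\,\lesssim\, e^{-c\sqrt\mu|z|}\,\rho_\mu(|z|),
\]
with $\rho_\mu$ at most polynomially growing in $|z|$ (uniformly on bounded $\mu$); applying Lemma~\ref{lem:covar} then yields $|\cov{Y(z)}{Y(0)}|\lesssim e^{-c\sqrt\mu|z|}\rho_\mu(|z|)$, which is summable over $z\in\Z^d$. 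Hence the series \eqref{eq:conv-series} converges absolutely.

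For non-negativity of $\sigma_{k,\mu}^2$, I would argue that $\sigma_{k,\mu}^2 = \lim_{L\to\infty}\sigma_{L,k,\mu}^2 = \lim_{L\to\infty} L^d\var{A_{L,k,\mu}}\ge 0$: indeed $\sigma_{L,k,\mu}^2$ is by definition a rescaled variance, hence non-negative, and one checks that $\sigma_{L,k,\mu}^2\to\sigma_{k,\mu}^2$ using the periodic analogue of the covariance bound above (the periodic Green function obeys the same estimates of Lemma~\ref{lem:Green-annealed}) to control the error between the periodic sum $L^d\var{A_{L,k,\mu}} = \sum_{z}\cov{Y_L(z)}{Y_L(0)}$ (sum over $\T_L$, with $Y_L$ built from $\phi_{L,k,\mu}$) and the whole-space sum \eqref{eq:conv-series}; this convergence is in fact exactly the content of Proposition~\ref{prop:var-resc-mu} proved just afterwards, so it suffices here to note $\sigma_{k,\mu}^2\ge 0$ as the limit of non-negative numbers. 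The main obstacle is the bookkeeping in the sensitivity estimate for the Richardson-extrapolated corrector $\phi_{k,\mu}$ — keeping track of the iterated Green-function convolutions and verifying that each one decays fast enough in $|z|$ to be summed — but this is exactly the machinery already developed in Steps~2 and~3 of the proof of Lemma~\ref{lem:moment-4}, specialized to $\mu$ fixed and transplanted from $\T_L$ to $\Z^d$.
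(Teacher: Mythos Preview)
Your proposal is correct and follows the same overall strategy as the paper: apply the covariance estimate of Lemma~\ref{lem:covar}, compute the vertical derivative of the energy density via the Green representation formula for the massive equation, and invoke the annealed Green-function bounds of Lemma~\ref{lem:Green-annealed} together with moment bounds on $\nabla\phi_\mu$ to conclude absolute summability of the series.

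The one notable difference is in how you handle general $k$. The paper observes (its Step~3) that since $\nabla\phi_{k,\mu}=\sum_{j=0}^{k-1}c_{j,k}\,\nabla\phi_{1,\mu/2^j}$ is a finite linear combination of $k=1$ correctors at scaled masses, the energy density $(\xi+\nabla\phi_{k,\mu})\cdot\aa(\xi+\nabla\phi_{k,\mu})$ is a finite bilinear combination of $k=1$--type terms, and the general case reduces to $k=1$ by the triangle inequality in one line. You instead propose to carry over the full iterated-convolution machinery from Steps~2--3 of the proof of Lemma~\ref{lem:moment-4}, which would work but is considerably more bookkeeping than needed here: in this lemma $\mu>0$ is fixed and no rate in $\mu$ is required, so the simple reduction suffices and is cleaner. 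The paper also obtains the sharper pointwise bound $|\cov{\ener_\mu(x)}{\ener_\mu(0)}|\lesssim e^{-c\sqrt{\mu}|x|}/(1+|x|^d)$ rather than just exponential-times-polynomial; either is enough for summability, but the sharper form is reused in the proof of Proposition~\ref{prop:var-resc-mu}. Your treatment of non-negativity via the limit of $\sigma_{L,k,\mu}^2\ge 0$ (forward-referencing Proposition~\ref{prop:var-resc-mu}) is more explicit than the paper, which leaves this point implicit.
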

In order to prove Lemma~\ref{lem:conv-series} we shall appeal to the following
covariance estimate valid for product measures (cf. \cite[Lemma~3]{Gloria-Otto-09b}):
\begin{lemma}\label{lem:covar}
Let $\mathbb P_L$ be a product measure on $\Omega_L$ for $L\in \N\cup\{+\infty\}$ (with $\Omega_L=\Omega$ for $L=+\infty$).
Then for all $X,Y\in L^2(\Omega_L)$,
\begin{equation}\label{eq:covar}
\covL{X}{Y}\,\lesssim \, \sum_{e\in \B_L}\EL{\sup_{\aa_L(e)}\Big|\frac{\partial X}{\partial \aa_L(e)}\Big|^2}^{\frac{1}{2}}
\EL{\sup_{\aa_L(e)}\Big|\frac{\partial Y}{\partial \aa_L(e)}\Big|^2}^{\frac{1}{2}},
\end{equation}
where for all $e\in \B_L$, $\aa_L(e)$ denotes the $i^{th}$ entry of the diagonal matrix $\aa_L(z)$ at point $z\in\Z^d$ for which $e=(z,z+\ee_i)$. 
\qed
\end{lemma}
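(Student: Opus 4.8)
The plan is to prove \eqref{eq:covar} by the classical martingale decomposition of $X-\EL{X}$ and $Y-\EL{Y}$ along the edges of $\B_L$, combined with a one-variable bound on each martingale increment; this is the argument of \cite[Lemma~3]{Gloria-Otto-09b}, reproduced here in our notation. Enumerate the edges $\B_L=\{e_1,e_2,\dots\}$ (a finite list when $L<\infty$, a countable list when $L=\infty$) and set $\mathcal F_n:=\sigma(\aa_L(e_1),\dots,\aa_L(e_n))$, with $\mathcal F_0$ trivial; since $\Pm_L$ is a product measure, $\mathcal F_\infty=\calF$ modulo $\Pm_L$-null sets. For $X\in L^2(\Omega_L,\Pm_L)$ introduce the Doob martingale $F_n:=\EL{X\mid\mathcal F_n}$ and the increments $\Delta_nX:=F_n-F_{n-1}$, and similarly $\Delta_nY$ for $Y$. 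By $L^2$ martingale convergence, $X-\EL{X}=\sum_{n\ge1}\Delta_nX$ in $L^2(\Omega_L)$, with $\sum_n\EL{(\Delta_nX)^2}=\var{X}<\infty$, and the increments are orthogonal in the sense that $\EL{\Delta_mX\,\Delta_nY}=0$ whenever $m\neq n$: if $m<n$ then $\Delta_mX$ is $\mathcal F_{n-1}$-measurable while $\EL{\Delta_nY\mid\mathcal F_{n-1}}=0$, and symmetrically if $m>n$. Consequently, by bilinearity of the $L^2(\Omega_L)$ inner product and the Cauchy--Schwarz inequality,
\[
\covL{X}{Y}\,=\,\sum_{n\ge1}\EL{\Delta_nX\,\Delta_nY}\,\le\,\sum_{n\ge1}\EL{(\Delta_nX)^2}^{\frac12}\,\EL{(\Delta_nY)^2}^{\frac12}.
\]

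It thus suffices to prove, for each $n$, the one-increment bound $\EL{(\Delta_nX)^2}\lesssim\EL{\sup_{\aa_L(e_n)}|\partial X/\partial\aa_L(e_n)|^2}$, after which reindexing the sum by $e\in\B_L$ gives \eqref{eq:covar}. The function $F_n$ depends only on $(\aa_L(e_1),\dots,\aa_L(e_n))$, and by the product structure $F_{n-1}$ is $F_n$ averaged over its last argument against $\pi$; hence, conditionally on $(\aa_L(e_1),\dots,\aa_L(e_{n-1}))$, one has $\Delta_nX=F_n(\cdot,\aa_L(e_n))-\int F_n(\cdot,t)\,d\pi(t)$, so that
\[
\EL{(\Delta_nX)^2\mid\mathcal F_{n-1}}\,=\,\frac12\iint\big(F_n(\cdot,s)-F_n(\cdot,t)\big)^2\,d\pi(s)\,d\pi(t).
\]
Now $F_n(\cdot,s)-F_n(\cdot,t)$ is the average over $\aa_L(e_{n+1}),\aa_L(e_{n+2}),\dots$ of $X_{e_n=s}-X_{e_n=t}$, where $X_{e_n=r}$ denotes $X$ with the $e_n$-coordinate set to $r$; since $\aa_L(e_n)\mapsto X$ is $C^1$, the mean value theorem gives $|X_{e_n=s}-X_{e_n=t}|\le|s-t|\,\sup_{\aa_L(e_n)}|\partial X/\partial\aa_L(e_n)|$, whence $|F_n(\cdot,s)-F_n(\cdot,t)|\le|s-t|\,\EL{\sup_{\aa_L(e_n)}|\partial X/\partial\aa_L(e_n)|\mid\mathcal F_{n-1}}$. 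Plugging this in, using $\tfrac12\iint(s-t)^2\,d\pi(s)\,d\pi(t)=\var{\aa_L(e_n)}\lesssim1$ (the conductances lie in $[\lambda,1]$), taking the full expectation via the tower property, and applying Jensen's inequality to remove the inner conditional expectation yields the desired one-increment bound.

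Two small points require attention rather than effort. First, the computation tacitly assumes $X$ and $Y$ are absolutely continuous (here $C^1$) in each coordinate $\aa_L(e)$; if not, the supremum on the right-hand side should be read as $+\infty$, or more sharply replaced by the oscillation of $X$ as $\aa_L(e)$ runs over $[\lambda,1]$, and \eqref{eq:covar} becomes trivial --- in every application in this paper $X,Y$ are smooth functions of finitely many conductances (cf.\ \cite[Lemma~2.4]{Gloria-Otto-09}). Second, in the case $L=\infty$ one must record that the orthogonal expansion converges in $L^2$ and that $\sum_n\EL{(\Delta_nX)^2}<\infty$, which legitimizes the interchange of summation with expectation and the Cauchy--Schwarz step for the resulting series. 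I anticipate no genuine obstacle: the proof uses only orthogonality of martingale increments and a fundamental-theorem-of-calculus estimate in a single variable, the sole delicate point being the careful tracking of the conditioning in the increment bound.
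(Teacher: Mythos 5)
Your proof is correct: the paper does not prove Lemma~\ref{lem:covar} itself but refers to \cite[Lemma~3]{Gloria-Otto-09b}, and your argument --- Doob martingale along an enumeration of $\B_L$, orthogonality of increments plus Cauchy--Schwarz, and the one-increment bound obtained from the conditional variance identity, the mean value theorem in the single coordinate $\aa_L(e_n)$, $\frac12\iint(s-t)^2\,d\pi\,d\pi=\var{a(e)}\lesssim 1$, and conditional Jensen --- is precisely the standard proof of that cited covariance estimate. Your two caveats (coordinatewise differentiability of $X,Y$, and the $L^2$ convergence issues when $L=\infty$) are handled appropriately and do not affect the applications in the paper.
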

\begin{proof}[Proof of Lemma~\ref{lem:conv-series}]
We split the proof into three steps.
We first estimate vertical derivatives for $k=1$, then prove the summability of the series by applying the covariance estimate of Lemma~\ref{lem:covar} and appealing to the annealed estimates of Lemma~\ref{lem:Green-annealed}.
We then conclude in the last step for $k>1$.

\medskip

\step{1} Vertical derivative of the energy density for $k=1$ and proof of 
\begin{multline}\label{eq:vert-der-ener}
\sup_{\aa(e)}\Big|\frac{\partial \ener_\mu(x)}{\partial \aa(e)}\Big|\,\lesssim \,(1+|\nabla \phi_{\mu}(z)|)^2\delta(z-x)
\\+| \nabla_x\nabla_{z_i} G_\mu(x,z)|(1+|\nabla \phi_{\mu}(x)|^2+|\nabla \phi_{\mu}(z)|^2),
\end{multline}
where $\ener_\mu(x):=(\xi+\nabla \phi_{\mu})\cdot \aa(\xi+\nabla \phi_{\mu})(x)$, $e=(z,z+\ee_i)$ for some $i\in \{1,\dots, d\}$, and $G_\mu$ is the massive Green function on $\Z^d$ associated with the operator $\mu-\nabla^*\cdot\aa \nabla$.

\noindent By the Leibniz rule, 
$$
\frac{\partial \ener_\mu(x)}{\partial \aa(e)}\,=\,(\xi+\nabla \phi_{\mu}(x))\cdot \frac{\partial \aa(x)}{\partial \aa(e)}(\xi+\nabla \phi_{\mu}(x))+2\nabla\frac{\partial \phi_{\mu}(x)}{\partial \aa(e)}\cdot \aa(x)(\xi+\nabla \phi_{\mu}(x)),
$$
so that \eqref{eq:prop7-1.2} (in its whole space version, the proof of which is identical) yields
$$
\frac{\partial \ener_\mu(x)}{\partial \aa(e)}\,=\,(\xi\cdot \ee_i+\nabla_i \phi_{\mu}(z))^2\delta(z-x)-2\nabla_i \phi_\mu(z) \nabla_x\nabla_{z_i} G_\mu(x,z)\cdot \aa(x)(\xi+\nabla \phi_{\mu}(x)).
$$
The desired estimate then follows from taking the supremum over $\aa(e)$ on the mixed second gradient of
the Green function and on the gradient of the corrector, as we already did in \eqref{eq:prop7-1.5} and \eqref{eq:prop7-1.7} on the $L$-torus (the proofs are identical).

\medskip

\step{2} Proof of the summability of the RHS of \eqref{eq:conv-series} for $k=1$.

\noindent
Let $x\in \Z^d$. The covariance estimate \eqref{eq:covar}  on $\Omega$ yields
\begin{eqnarray*}
\cov{\ener_\mu(x)}{\ener_\mu(0)}&\lesssim & \sum_{e\in \B}\E{\sup_{\aa(e)}\Big|\frac{\partial \ener_\mu(x)}{\partial \aa(e)}\Big|^2}^{\frac{1}{2}}
\E{\sup_{\aa(e)}\Big|\frac{\partial \ener_\mu(0)}{\partial \aa(e)}\Big|^2}^{\frac{1}{2}}.
\end{eqnarray*}
We estimate each term using \eqref{eq:vert-der-ener},  the boundedness of the fourth moment of the 
gradient of the corrector (cf. \cite[Proposition~1]{Gloria-Neukamm-Otto-14}) and the annealed estimate \eqref{eq:annealed-nablaG} (also valid on the whole space with the massive term),
so that by Cauchy-Schwarz' inequality and stationarity of $\nabla  \phi_\mu$,
\begin{equation}\label{eq:covar-x-0-ener.0}
\E{\sup_{\aa(e)}\Big|\frac{\partial \ener_\mu(x)}{\partial \aa(e)}\Big|^2}^{\frac{1}{2}}
\,\lesssim\, \delta(z-x)+\frac{e^{-c\sqrt{\mu}|z-x|}}{1+|z-x|^d}.
\end{equation}
We may then estimate the above covariance as follows:
\begin{equation}\label{eq:covar-x-0-ener}
|\cov{\ener_\mu(x)}{\ener_\mu(0)}|\,\lesssim \,\sum_{z\in \Z^d} (\delta(z-x)+\frac{e^{-c\sqrt{\mu}|z-x|}}{1+|z-x|^d})( \delta(z)+\frac{e^{-c\sqrt{\mu}|z|}}{1+|z|^d})
\,\lesssim \, \frac{e^{-c\sqrt{\mu}|x|}}{1+|x|^d}.
\end{equation}
Summing \eqref{eq:covar-x-0-ener} over $x\in \Z^d$ finally yields
\begin{equation*}
\sum_{x\in \Z^d}|\cov{\ener_\mu(x)}{\ener_\mu(0)}|\,\lesssim \,\sum_{x\in \Z^d} \frac{e^{-c\sqrt{\mu}|x|}}{1+|x|^d} \, \lesssim \, 1+|\log \mu|,
\end{equation*}
that is, the claimed summability.

\medskip

\step{3} Reduction to the case $k=1$.

\noindent By definition of the Richardson extrapolation, 
$\nabla \phi_{k,\mu}\,=\,\sum_{i=0}^{k-1} c_{j,k} \nabla \phi_{k,\frac{\mu}{2^j}}$ for some 
coefficients $c_{j,k}$.
The result for $k>1$ thus follows from the corresponding result for $k=1$ by the triangle inequality.
\end{proof}

We are in position to prove Proposition~\ref{prop:var-resc-mu}.
For notational convenience we center the periodic cell at $0$ in the following.
\begin{proof}[Proof of Proposition~\ref{prop:var-resc-mu}]
As for the proof of Lemma~\ref{lem:conv-series}, by definition of extrapolation and by the triangle inequality it is enough
to prove the claim for $k=1$, so that we only consider $k=1$ in the proof and we drop the subscript $k$
in our notation.
We split the proof into four steps. We start with a reformulation of $\sigma_\mu^2-\sigma_{L,\mu}^2$
as the sum of three terms, which we estimate one by one.

\medskip

\step{1} Reduction of the claim.

\noindent We first argue that
\begin{equation}\label{eq:refo-sigmaLmu}
\sigma_{L,\mu}^2\,:=\,L^{d}\var{a_{L,\mu}}\,=\,\sum_{x\in [-\lceil \frac{L}{2}\rceil,\frac{L}{2})^d\cap \Z^d} \cov{\ener_{L,\mu}(x)}{\ener_{L,\mu}(0)},
\end{equation}
where $\ener_{L,\mu}(x)\,=\,(\xi+\nabla \phi_{L,\mu})\cdot\aa_L(\xi+\nabla \phi_{L,\mu})(x)$
and $\lceil t \rceil$ denotes the smallest integer larger or equal to $t$.
Indeed, by stationarity,
\begin{multline*}
\sigma_{L,\mu}^2\,=\,L^{-d} \sum_{x\in \T_L}\sum_{y\in \T_L} \cov{\ener_{L,\mu}(x)}{\ener_{L,\mu}(y)}
\\
\,=\,L^{-d} \sum_{x\in \T_L}\sum_{y\in \T_L} \cov{\ener_{L,\mu}(x-y)}{\ener_{L,\mu}(0)}\,=\,\sum_{x\in \T_L}\cov{\ener_{L,\mu}(x)}{\ener_{L,\mu}(0)},
\end{multline*}
so that \eqref{eq:refo-sigmaLmu} follows by choosing $[-\lceil \frac{L}{2}\rceil,\frac{L}{2})^d\cap \Z^d$ as a representation of $\T_L$.

In particular, \eqref{eq:refo-sigmaLmu} allows one to rewrite the difference $\sigma^2_\mu-\sigma_{L,\mu}^2$
as the sum of three terms
$$
\sigma^2_\mu-\sigma_{L,\mu}^2\,=\,I_1(L,\mu)+I_2(L,\mu)+I_3(L,\mu),
$$
where
\begin{eqnarray*}
I_1(L,\mu)&:=&\sum_{x\in  [-\lceil \frac{L}{4}\rceil,\frac{L}{4})^d\cap \Z^d} \Big( \cov{\ener_{\mu}(x)}{\ener_{\mu}(0)}-
\cov{\ener_{L,\mu}(x)}{\ener_{L,\mu}(0)}\Big),\label{eq:def-I1}\\
I_2(L,\mu)&:=&\sum_{x\in \Z^d \setminus [-\lceil \frac{L}{4}\rceil,\frac{L}{4})^d} 
\cov{\ener_{\mu}(x)}{\ener_{\mu}(0)},\label{eq:def-I2}\\
I_3(L,\mu)&:=&-\sum_{x\in \big([-\lceil \frac{L}{2}\rceil,\frac{L}{2})^d\setminus [-\lceil \frac{L}{4}\rceil,\frac{L}{4})^d\big)\cap \Z^d} \cov{\ener_{L,\mu}(x)}{\ener_{L,\mu}(0)}.\label{eq:def-I3}
\end{eqnarray*}
The desired estimate \eqref{eq:resc-var-mu} then follows from the combination 
of the following three estimates:
\begin{eqnarray}
|I_1(L,\mu)|&\lesssim &L^d \log(2+\sqrt{\mu}L) e^{-c\sqrt{\mu}L},\label{eq:estim-I1}\\
|I_2(L,\mu)|&\lesssim &\log(2+\sqrt{\mu}L) e^{-c\sqrt{\mu}L},\label{eq:estim-I2}\\
|I_3(L,\mu)|&\lesssim &\log(2+\sqrt{\mu}L) e^{-c\sqrt{\mu}L},\label{eq:estim-I3}
\end{eqnarray}
which we prove in the last three steps.

\medskip

\step{2} Proof of \eqref{eq:estim-I1}.

\noindent Let $x\in [-\lceil \frac{L}{4}\rceil,\frac{L}{4})^d\cap \Z^d$.
By bilinearity of the covariance, the Cauchy-Schwarz and triangle inequalites, and stationarity of the energy densities
$\ener_\mu$ and $\ener_{L,\mu}$, we have
\begin{eqnarray}
\lefteqn{|\cov{\ener_{\mu}(x)}{\ener_{\mu}(0)}-\cov{\ener_{L,\mu}(x)}{\ener_{L,\mu}(0)}|}
\nonumber\\
&=&|\cov{\ener_{\mu}(x)-\ener_{L,\mu}(x)}{\ener_{\mu}(0)}
+\cov{\ener_{L,\mu}(x)}{\ener_{\mu}(0)-\ener_{L,\mu}(0)}|
\nonumber\\
&=&\Big|\E{\Big(\ener_{\mu}(x)-\ener_{L,\mu}(x)-\E{\ener_{\mu}(x)-\ener_{L,\mu}(x)}\Big) \Big(\ener_{\mu}(0)-\E{\ener_{\mu}(0)}\Big)}
\nonumber\\
&&+\E{\Big(\ener_{L,\mu}(0)-\ener_{\mu}(0)-\E{\ener_{\mu}(0)-\ener_{L,\mu}(0)}\Big) \Big(\ener_{L,\mu}(x)-\E{\ener_{L,\mu}(x)}\Big)}\Big|
\nonumber\\
&\leq &4\E{(\ener_{\mu}(x)-\ener_{L,\mu}(x))^2}^{\frac{1}{2}}\E{\ener_{\mu}^2}^{\frac{1}{2}}
+4\E{(\ener_{\mu}(0)-\ener_{L,\mu}(0))^2}^{\frac{1}{2}}\E{\ener_{L,\mu}^2}^{\frac{1}{2}}.
\label{eq:estim-I1-0}
\end{eqnarray}
On the one hand, by definition of the energy densities $\ener_{L,\mu}$ and $\ener_{\mu}$ and
by \cite[Proposition~1]{Gloria-Neukamm-Otto-14},
\begin{eqnarray}
\E{\ener_{L,\mu}^2}^{\frac{1}{2}}&\lesssim & \E{1+|\nabla \phi_{L,\mu}|^4}^{\frac{1}{2}} \,\lesssim\, 1 \label{eq:estim-I1-1}\\
\E{\ener_{\mu}^2}^{\frac{1}{2}}&\lesssim & \E{1+|\nabla \phi_{\mu}|^4}^{\frac{1}{2}} \,\lesssim\, 1.
\label{eq:estim-I1-2}
\end{eqnarray}
On the other hand, by definition of $\aa_L$, we have for all $z\in [-\lceil \frac{L}{2}\rceil,\frac{L}{2})^d\cap \Z^d$
\begin{eqnarray*}
\lefteqn{\ener_{\mu}(z)-\ener_{L,\mu}(z)}
\\
&=&(\xi+\nabla \phi_{\mu})\cdot\aa(\xi+\nabla \phi_{\mu})(z)-(\xi+\nabla \phi_{L,\mu})\cdot\aa_L(\xi+\nabla \phi_{L,\mu})(z) \\
&=&(\xi+\nabla \phi_{\mu})\cdot\aa(\xi+\nabla \phi_{\mu})(z)-(\xi+\nabla \phi_{L,\mu})\cdot\aa(\xi+\nabla \phi_{L,\mu})(z) \\
&=&(\nabla \phi_{\mu}-\nabla \phi_{L,\mu})\cdot\aa(\xi+\nabla \phi_{\mu})(z)
-(\xi+\nabla \phi_{L,\mu})\cdot\aa(\nabla \phi_{L,\mu}-\nabla \phi_{\mu})(z) ,
\end{eqnarray*}
so that by Cauchy-Schwarz' inequality and \cite[Proposition~1]{Gloria-Neukamm-Otto-14},
\begin{equation}
\E{(\ener_{\mu}(x)-\ener_{L,\mu}(z))^2}^\frac{1}{2}\,\lesssim \, \E{|\nabla \phi_{\mu}(z)-\nabla \phi_{L,\mu}(z)|^4}^\frac{1}{4}.\label{eq:estim-I1-3}
\end{equation}
It remains to estimate the RHS of \eqref{eq:estim-I1-3}.
To this end, recall that for $\mu>0$, the function $\phi_{L,\mu}$ is the unique bounded solution on $\Z^d$ of
$$
\mu \phi_{L,\mu}-\nabla^*\cdot \aa_L(\xi+\nabla \phi_{L,\mu})\,=\,0,
$$
where $\aa_L$ is the periodic extension on $\Z^d$ of $\aa|_{[-\lceil \frac{L}{2}\rceil,\frac{L}{2})^d\cap \Z^d}$.
Hence the difference $\delta_{L,\mu}:=\phi_{\mu}- \phi_{L,\mu}$ solves
$$
\mu \delta_{L,\mu}-\nabla^*\cdot\aa \nabla   \delta_{L,\mu}\,=\,\nabla^*\cdot (\aa-\aa_L)\nabla \phi_{L,\mu},
$$
and the Green representation formula yields for all $z\in \Z^d$,
\begin{eqnarray*}
\nabla \delta_{L,\mu}(z)&=&\sum_{y\in \Z^d} \nabla \nabla G_\mu(z,y)\cdot (\aa(y)-\aa_L(y))\nabla \phi_{L,\mu}(y)
\\
&=&\sum_{y\in \Z^d\setminus [-\lceil \frac{L}{2}\rceil,\frac{L}{2})^d} \nabla \nabla G_\mu(z,y)\cdot (\aa(y)-\aa_L(y))\nabla \phi_{L,\mu}(y).
\end{eqnarray*}
By the triangle inequality, Cauchy-Schwarz' inequality, 
the annealed estimate \eqref{eq:annealed-nabla2G} in Lemma~\ref{eq:ptwise-G}, and \cite[Proposition~1]{Gloria-Neukamm-Otto-14}, this turns into
\begin{eqnarray*}
\lefteqn{\E{|\nabla \phi_{\mu}(z)-\nabla \phi_{L,\mu}(z)|^4}^\frac{1}{4}}
\\
&\lesssim &
\sum_{y\in \Z^d\setminus [-\lceil \frac{L}{2}\rceil,\frac{L}{2})^d} \expec{|\nabla \nabla G_\mu(z,y)\cdot (\aa(y)-\aa_L(y))\nabla \phi_{L,\mu}(y)|^4}^{\frac{1}{4}}
\\
&\lesssim & \sum_{y\in \Z^d\setminus [-\lceil \frac{L}{2}\rceil,\frac{L}{2})^d} \expec{|\nabla \nabla G_\mu(z,y)|^8}^{\frac{1}{8}}\E{|\nabla \phi_{L,\mu}(y)|^8}^{\frac{1}{8}}
\\
&\lesssim& \sum_{y\in \Z^d\setminus [-\lceil \frac{L}{2}\rceil,\frac{L}{2})^d} \frac{e^{-c\sqrt{\mu}|z-y|}}{1+|z-y|^d}.
\end{eqnarray*}
Since $x\in [-\lceil \frac{L}{4}\rceil,\frac{L}{4})^d\cap \Z^d$ we thus have
\begin{equation}
\E{|\nabla \phi_{\mu}(x)-\nabla \phi_{L,\mu}(x)|^4}^\frac{1}{4}
\,\lesssim \, \sum_{y\in \Z^d\setminus [-\lceil \frac{L}{4}\rceil,\frac{L}{4})^d} \frac{e^{-c\sqrt{\mu}|y|}}{1+|y|^d}\,\lesssim\, \log(2+\sqrt{\mu}L) e^{-c\sqrt{\mu}L}.\label{eq:estim-I1-4}
\end{equation}
The combination of \eqref{eq:estim-I1-0}---\eqref{eq:estim-I1-4} proves the desired estimate \eqref{eq:estim-I1} by summation over $x\in [-\lceil \frac{L}{4}\rceil,\frac{L}{4})^d\cap \Z^d$.

\medskip

\step{3} Proof of \eqref{eq:estim-I2}.

\noindent This is a direct consequence of the covariance estimate \eqref{eq:covar-x-0-ener} in the proof of Lemma~\ref{lem:conv-series}, after summation over $\Z^d \setminus  [-\lceil \frac{L}{4}\rceil,\frac{L}{4})^d$:
\begin{multline*}
|I_2(L,\mu)|\,\leq\,\sum_{x\in \Z^d \setminus  [-\lceil \frac{L}{4}\rceil,\frac{L}{4})^d} |\cov{\ener_\mu(x)}{\ener_\mu(0)}| \\
\stackrel{\eqref{eq:covar-x-0-ener}}{\lesssim} \,\sum_{x\in \Z^d \setminus  [-\lceil \frac{L}{4}\rceil,\frac{L}{4})^d} \frac{e^{-c\sqrt{\mu}|x|}}{1+|x|^d} \,\lesssim\, \log(2+\sqrt{\mu}L) e^{-c\sqrt{\mu}L}.
\end{multline*}

\medskip

\step{4} Proof of \eqref{eq:estim-I3}.

\noindent The proof is similar to the proof of \eqref{eq:estim-I2}.
The starting point is the defining equation \eqref{eq:prop7-1.0} on the torus.
Using the covariance estimate of Lemma~\ref{lem:covar} on $\Omega_L$, the same
proof as for \eqref{eq:covar-x-0-ener.0} leads to
\begin{equation*}
\EL{\sup_{\aa_L(e)}\Big|\frac{\partial \ener_{L,\mu}(x)}{\partial \aa_L(e)}\Big|^2}^{\frac{1}{2}}
\,\lesssim\, \delta(z-x)+\frac{e^{-c\sqrt{\mu}|z-x|}}{1+|z-x|^d},
\end{equation*}
for all $x,z\in \T_L$, $e=(z,z+\ee_i)$ for some $i\in \{1,\dots,d\}$, and where (in this step) $|x-z|$ denotes the  distance on the torus $\T_L$.
Using the identity $\cov{\ener_{L,\mu}(x)}{\ener_{L,\mu}(0)}=\covL{\ener_{L,\mu}(x)}{\ener_{L,\mu}(0)}$, this yields as in Step~3,
\begin{multline*}
|I_3(L,\mu)|
\,\leq\, \sum_{x\in [-\lceil \frac{L}{2}\rceil,\frac{L}{2})^d \setminus  [-\lceil \frac{L}{4}\rceil,\frac{L}{4})^d\cap \Z^d} |\covL{\ener_{L,\mu}(x)}{\ener_{L,\mu}(0)}| 
\\
\,\lesssim \,\sum_{x\in  [-\lceil \frac{L}{2}\rceil,\frac{L}{2})^d \setminus [-\lceil \frac{L}{4}\rceil,\frac{L}{4})^d\cap \Z^d} \frac{e^{-c\sqrt{\mu}|x|}}{1+|x|^d} \,\lesssim\, \log(2+\sqrt{\mu}L) e^{-c\sqrt{\mu}L}.
\end{multline*}
\end{proof}
%


\section{Normal approximation and proof of Proposition~\ref{th:normal}}\label{sec:normal}

\subsection{Structure of the proof and auxiliary results}

Using a version of Stein's method, developed by Chatterjee \cite{Chatterjee-08} and extended by Lachi\`eze-Rey and Peccati \cite{LRP-15}, we shall prove the following: 
\begin{prop}\label{prop:normal}
Let $\sigma_L^2:=L^d \var{A_L}$. There exists $r>8$ such that
\begin{equation}
d_{K} \Big(L^{\frac{d}{2}} \frac{A
_L- \expE[A_L]}{ \sigma_L},\mathcal G \Big)\,\lesssim \,  \frac{L^{-\frac{d}{2}}}{\sigma_L^3}(1 + \expec{|\nabla \phi_{L}|^{12}}^\frac{1}{2})+\frac{L^{-\frac{d}{2}}\log L}{\sigma_L^2}(1 + \expec{|\nabla \phi_{L}|^{r}}^{\frac{4}{r}}),
\end{equation}
where $\mathcal G$ is a standard normal random variable.
\qed
\end{prop}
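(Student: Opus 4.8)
The plan is to adapt to the present discrete setting the normal approximation argument carried out by the second author in \cite{Nolen-11,Nolen-14}, applying the abstract Berry--Esseen estimate for functionals of independent random variables of Lachi\`eze-Rey and Peccati \cite{LRP-15} (the Kolmogorov-distance refinement of Chatterjee's method \cite{Chatterjee-08}). One views $A_L$ as a function of the independent family $\{\aa_L(e)\}_{e\in\B_L}$; for $e\in\B_L$ let $\aa_L^e$ be the coefficient field obtained from $\aa_L$ by resampling the single entry $\aa_L(e)$ by an independent copy, let $\phi_L^e$, $A_L^e$ be the associated corrector and averaged energy, and let $\Delta_e A_L:=A_L-A_L^e$ and $\Delta_e\Delta_{e'}A_L$ be the corresponding first and second increments. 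Setting $W:=L^{\frac{d}{2}}(A_L-\E{A_L})/\sigma_L$, so that $\E{W}=0$ and $\var{W}=1$, the abstract estimate bounds $d_K(W,\mathcal G)$ by the sum of two contributions: a \emph{third-moment} term which, after normalization, is of order $\sigma_L^{-3}$ times a combination of $\sum_{e}\E{|\Delta_e A_L|^4}$ and $\sum_{e}\E{|\Delta_e A_L|^2}$, and a \emph{second-order} term of order $\sigma_L^{-2}$ times a multiple sum over edges of expectations of products of the first increments $\Delta_e A_L$ and the second increments $\Delta_e\Delta_{e'}A_L$. Everything then reduces to estimating these influence functionals, using only the finite-moment bounds on $\nabla\phi_L$ of \cite[Proposition~1]{Gloria-Neukamm-Otto-14} and the annealed Green-function estimates of Lemma~\ref{lem:Green-annealed}.

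For the first increment, the resampling difference is dominated by the supremum of the vertical derivative, and by the variational (energy-minimizing) characterization of $\phi_L$ --- exactly as in the computation of $g'(\alpha)$ in the proof of Proposition~\ref{prop:lower-bound} --- one has, for $e=(z,z+\ee_i)$, $\frac{\partial A_L}{\partial\aa_L(e)}=L^{-d}(\xi\cdot\ee_i+\nabla_i\phi_L(z))^2$; combined with the energy estimate for the dependence of $\phi_L$ on $\aa_L(e)$ (as in the proof of \eqref{gplower}) this gives
\[
|\Delta_e A_L|\,\lesssim\,\sup_{\aa_L(e)}\Big|\frac{\partial A_L}{\partial\aa_L(e)}\Big|\,\lesssim\,L^{-d}(1+|\nabla\phi_L(z)|)^2 .
\]
Thus the first increment is localized at the base point of $e$. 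Summing over the $\sim L^d$ edges, inserting this into the third-moment contribution, distributing the powers of $(1+|\nabla\phi_L|)$ by the Cauchy--Schwarz inequality, and using \cite[Proposition~1]{Gloria-Neukamm-Otto-14}, yields the first term $\frac{L^{-d/2}}{\sigma_L^3}(1+\E{|\nabla\phi_L|^{12}}^{1/2})$ of the claim (the exponent $12$ being produced by the coupling of the fourth- and second-order moments of $\Delta_e A_L$).

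For the second increment one differentiates the corrector equation \eqref{eq:corr-L} successively with respect to $\aa_L(e)$ and $\aa_L(e')$ and applies the Green representation formula \eqref{eq:prop7-1.2} twice (the sensitivity calculus of \cite{Gloria-Otto-09}); this gives a closed expression for $\frac{\partial^2 A_L}{\partial\aa_L(e)\,\partial\aa_L(e')}$ involving only a \emph{single} mixed gradient of the Green function (no convolution of two Green functions, in contrast with the $\partial_\mu$-derivative formula \eqref{eq:prop7-1.3}), whence, after taking suprema over $\aa_L(e)$ and $\aa_L(e')$ as in \eqref{eq:prop7-1.4}--\eqref{eq:prop7-1.8}, for $e=(z,z+\ee_i)$ and $e'=(z',z'+\ee_j)$,
\[
|\Delta_e\Delta_{e'}A_L|\,\lesssim\,L^{-d}(1+|\nabla\phi_L(z)|)(1+|\nabla\phi_L(z')|)\,|\nabla\nabla G_L(z,z')|+(\text{lower order}),
\]
where $G_L$ is the $\mu=0$ periodic Green function of Lemma~\ref{lem:Green-annealed}. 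Inserting these bounds into the second-order contribution, expanding, applying H\"older's inequality to separate the corrector factors from the Green functions, and using the finite-moment bounds on $\nabla\phi_L$ together with the annealed estimates \eqref{eq:annealed-nablaG}--\eqref{eq:annealed-nabla2G} (which give $\E{|\nabla\nabla G_L(z,z')|^q}^{1/q}\lesssim(1+|z-z'|)^{-d}$ for every $q\ge1$), one is left with multiple sums over edges, handled by comparison with integrals. Since at $\mu=0$ there is no exponential gain in the Green-function bounds, the borderline sum $\sum_{|z|\le L}(1+|z|)^{-d}\sim\log L$ appears, which is the origin of the logarithmic factor; the same borderline H\"older bookkeeping is what forces a corrector moment $\E{|\nabla\phi_L|^r}$ for some $r>8$. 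This produces the second term $\frac{L^{-d/2}\log L}{\sigma_L^2}(1+\E{|\nabla\phi_L|^{r}}^{4/r})$, and adding the two contributions proves the proposition.

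The main obstacle is this last step: pinning down the precise form of the abstract Lachi\`eze-Rey--Peccati bound, deriving the closed sensitivity formulas for the first and second vertical derivatives of $A_L$ together with sharp control of their suprema over the resampled conductances (the discrete analogue of \eqref{eq:prop7-1.4}--\eqref{eq:prop7-1.8}), and then organizing the multiple edge-sums and the H\"older bookkeeping so that the $(1+|z-z'|)^{-d}$ decay of $\nabla\nabla G_L$ closes every sum while leaving exactly one factor $\log L$ and a moment exponent $r>8$. By comparison, the invocation of the moment bounds \cite[Proposition~1]{Gloria-Neukamm-Otto-14}, the annealed estimates of Lemma~\ref{lem:Green-annealed}, and the first-increment estimate are routine.
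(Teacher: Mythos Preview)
Your proposal is correct and follows essentially the same strategy as the paper: apply the Lachi\`eze-Rey--Peccati/Chatterjee bound, control the first increment $\Delta_e A_L$ by $L^{-d}(1+|\nabla\phi_L|)^2$ via the energy identity, control the second increment $\Delta_e\Delta_{e'}A_L$ through the Green representation for $\Delta_{e'}\phi_L$ (giving one factor of $\nabla\nabla G_L$), and close the edge-sums with the annealed estimate $(1+|z-z'|)^{-d}$, which produces the $\log L$ and forces $r>8$ through the H\"older splitting. One small correction: the exponent $12$ does not arise from ``coupling fourth- and second-order moments'' but directly from the sixth-moment term $\sum_j\E{|\Delta_j f|^6}^{1/2}$ in the abstract bound, since $|\Delta_j f|^6\lesssim L^{-6d}(1+|\nabla\phi_L|)^{12}$; also, the paper works throughout with the finite resampling differences $\Delta_j$, $\Delta_k\Delta_j$ rather than passing through vertical derivatives and suprema, which is slightly more direct but equivalent to your route.
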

The combination of Proposition~\ref{prop:normal} with the moments bounds $\expec{|\nabla \phi_{L}|^q}\,\lesssim\,1$ for all $q\ge 1$
from \cite[Proposition~1]{Gloria-Neukamm-Otto-14} and with the lower bound on $\sigma_L$ from 
 Proposition \ref{prop:lower-bound} proves Proposition~\ref{th:normal}.

\medskip

The structure of the argument for Proposition~\ref{prop:normal} is similar to the analysis in \cite{Nolen-14} in the continuum setting. The main ingredient is a result on normal approximation \cite{Chatterjee-08, LRP-15}.
Under the product measure $\Pm_L$ on $\Omega_L = [\lambda,1]^{\mathbb{B}_L}$, the conductances $\{a(j)\}_{j \in \mathbb{B}_L}$  are independent random variables. Let $f \in L^2(\Omega_L,\Pm_L)$; we want to know whether the distribution of $f(a)$ is close to normal.

Let $a' = \{a'(j)\}_{j \in \mathbb{B}_L}$ denote an independent copy of the random conductance $a = \{a(j)\}_{j \in \mathbb{B}_L}$. Given $j \in \mathbb{B}_L$, define a new coefficient $a^j = \{a^j(\ell) \}_{\ell \in B_L}$ by
\begin{equation}
a^{j}(\ell) = \left\{ \begin{array}{cc} a(\ell), & \quad \ell \neq j \\ a'(\ell), & \quad \ell = j,  \end{array} \right. \label{ajdef}
\end{equation}
for $\ell \in \mathbb{B}_L$. That is, we replace the $j^{th}$ component of $a$ by the $j^{th}$ component of $a'$.  Similarly, for any set of edges $B \subset  \mathbb{B}_L$, we define a coefficient $a^B$ by replacing $a(\ell)$ by $a'(\ell)$, for all $\ell \in B$. Next, define the discrete difference
$$
\Delta_j f(a) = f(a^j) - f(a).
$$
This is a function of both $a$ and $a'$ (of $a$ and $a'(j)$) and we sometimes write $\Delta_j f(a,a')$ to emphasize this point. If $j \notin B$, then define
$$
\Delta_j f(a^B) = f(a^{B \cup \{j\}}) - f(a^B).
$$
The following identity is due to Lachi\`eze-Rey and Peccati: 
\begin{theo} \cite[Theorem 4.2]{LRP-15} \label{theo:normalapprox}
Let $f \in L^2(\Omega_L,\Pm_L)$ satisfy $\expE[f] = \mu$ and $\var{f} = s^2$. Then 
\begin{multline}
d_{K} \left( \frac{f - \mu}{s},Y\right) \leq  \frac{\sqrt{2\pi}}{16 s^3} \sum_{j \in \mathbb{B}_L} \expE \left [ |\Delta_j f(a)|^3 \right ]  
+ \frac{1}{4 s^3} \sum_{j \in \mathbb{B}_L} \expE \left [ |\Delta_j f(a)|^6 \right ]^\frac{1}{2}
\\
+ \frac{1}{s^2} \var{\expE[ T(a,a') | a]}^{\frac{1}{2}}+\frac{1}{s^2} \var{\expE[ T'(a,a') | a]}^{\frac{1}{2}}  \label{normalapproxsum}
\end{multline}
where $Y \sim N(0,1)$,
\begin{eqnarray*}
T(a,a')& =& \frac{1}{2} \sum_{j \in \mathbb{B}_L} \sum_{\substack{B \subset \mathbb{B}_L \\ j \notin B  } } K_{L,B}  \Delta_j f(a) \Delta_j f(a^{B}),\\
T'(a,a')& =& \frac{1}{2} \sum_{j \in \mathbb{B}_L} \sum_{\substack{B \subset \mathbb{B}_L \\ j \notin B  } } K_{L,B}  \Delta_j f(a) |\Delta_j f(a^{B})|,
\end{eqnarray*}
and $K_{L,B} = |B|! (|\mathbb{B}_L| - |B| - 1)! /(|\mathbb{B}_L|!)$.
\end{theo}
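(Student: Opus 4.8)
This statement is quoted from \cite[Theorem~4.2]{LRP-15}; I nevertheless sketch the strategy one would use to prove it, which rests on Stein's method in the form developed by Chatterjee \cite{Chatterjee-08,Chatterjee-09} and refined for the Kolmogorov distance in \cite{LRP-15}. Write $W=(f-\mu)/s$. For each $t\in\Rm$ the Stein equation $g_t'(w)-wg_t(w)=\mathbf 1_{w\le t}-\Phi(t)$ admits a solution $g_t$ with $\|g_t\|_\infty\le\sqrt{2\pi}/4$ and $\|g_t'\|_\infty\le 1$, whence $d_K(W,Y)\le\sup_{t}|\E{g_t'(W)-Wg_t(W)}|$. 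The whole problem thus reduces to estimating $\E{Wg(W)}$ uniformly over this one-parameter family of test functions, which are only of bounded variation.

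First I would set up the interpolation structure that produces the kernel $K_{L,B}$. Let $a'$ be an independent copy of $a$, draw a uniformly random ordering $\pi$ of $\B_L$, and form the chain $a=a^{(0)}\to a^{(1)}\to\cdots\to a^{(|\B_L|)}=a'$ in which $a^{(m)}$ has the first $m$ edges of $\pi$ resampled from $a'$. Telescoping gives $f(a')-f(a)=\sum_{m}\Delta_{\pi(m)}f(a^{(m-1)})$, and averaging over $\pi$ replaces the combinatorial weight of a position by the factor $K_{L,B}=|B|!(|\B_L|-|B|-1)!/|\B_L|!$ attached to the increments $\Delta_jf(a^B)$. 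Using the exchangeability of $(a,a')$ after symmetrisation, one then derives the basic identity that expresses $\E{Wg(W)}$ in terms of $\E{g'(W)\,\E{T\mid a}}$, with $T$ as in the statement, plus a first Taylor remainder bounded by $\sum_{j}\E{|\Delta_jf(a)|^3}$; this produces the first sum on the right-hand side.

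Next I would compare $\E{g'(W)\,\E{T\mid a}}$ with $\E{g'(W)}$. Here one uses the exact Efron--Stein type identity $\E T=\var f=s^2$ for product measures (in the spirit of Lemma~\ref{lem:covar}), so that after normalisation the difference is at most $\|g_t'\|_\infty\,\var{\E{T\mid a}}^{1/2}/s^2$, which is the third summand. The companion term with $T'$ and $|\Delta_jf(a^B)|$, and the term $\sum_j\E{|\Delta_jf(a)|^6}^{1/2}$, appear exactly when one passes from the Wasserstein distance (for which $g_t'$ is Lipschitz and Chatterjee's original argument applies verbatim) to the Kolmogorov distance: one introduces an auxiliary smoothing scale $\varepsilon$, treats the smoothed test function as above, and bounds the event that $W$ lies within $\varepsilon$ of $t$ by controlling how much $W$ can move under resampling of a single coordinate --- this is what brings in the absolute-value variant $T'$ and the higher moments of the increments; optimising in $\varepsilon$ yields the stated form.

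The main obstacle is precisely this last point: the Stein solution for $d_K$ has a derivative that jumps at $t$, so the clean Taylor computation available for smooth test functions must be replaced by a more delicate argument. Once the telescoping identity, the value $\E T=s^2$, and this Kolmogorov refinement are in place, the remaining steps --- the combinatorial bookkeeping of $K_{L,B}$ and the estimates of the conditional variances of $T$ and $T'$ --- are routine. The nontrivial content of \cite{LRP-15} is to carry the argument through while keeping the error expressed only through the $L^3$ and $L^6$ norms of $\{\Delta_jf(a)\}$ and the two conditional variances, without incurring uncontrolled moments of higher order.
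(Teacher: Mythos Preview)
The paper does not prove this statement: Theorem~\ref{theo:normalapprox} is simply quoted from \cite[Theorem~4.2]{LRP-15} and used as a black box in the proof of Proposition~\ref{prop:normal}. You correctly recognise this at the outset, and your sketch of the Chatterjee--Lachi\`eze-Rey--Peccati argument (Stein equation, random-ordering interpolation yielding the weights $K_{L,B}$, the identity $\E{T}=s^2$, and the smoothing/optimisation step that upgrades Wasserstein to Kolmogorov at the cost of $T'$ and the $L^6$ increments) is a reasonable outline of how the cited result is established; there is nothing further to compare.
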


\begin{rem} The right-hand side of \eqref{normalapproxsum} controls
not only the Kolmogorov distance, but also the Wasserstein distance (up to a factor $2$, cf. \cite[Theorem 2.2]{Chatterjee-08}).
If $a$ is a smooth function of Gaussian variables, then one can also control the total variation, cf. \cite[Theorem 3.1]{Chatterjee-14}.
\end{rem}

\begin{rem} \label{rem:avgSets}
If $S_{L,j}$ denotes the collection of all subsets $B \subset \mathbb{B}_L$ which do not contain the index $j$, then the weights $K_{L,B} \geq 0$ define a probability measure on $S_{L,j}$: $\sum_{B \in S_{L,j}} K_{L,B}=1$. 
\end{rem}

\subsection{Proof of Proposition~\ref{prop:normal}}

We split the proof into four steps.
The strategy consists in applying Theorem~\ref{theo:normalapprox} to $A_L$ and 
use the moment bounds on $\nabla \phi_L$ of \cite[Proposition~1]{Gloria-Neukamm-Otto-14}
and the annealed bounds on the Green function of \cite{Marahrens-Otto-13} gathered in Lemma~\ref{lem:Green-annealed} to estimate the RHS of \eqref{normalapproxsum}.
In the first step we simplify the RHS of \eqref{normalapproxsum} using the Efron-Stein inequality.
In the second step we control the sensitivity $\Delta A_L$ of the effective conductance with respect to 
local changes of the conductances via the sensitivity $\Delta \phi_L$ of the corrector itself.
The sensitivity of the corrector is then estimated in Step~3, and we conclude the proof in Step~4.

\medskip

\step{1} Reformulation of the RHS of \eqref{normalapproxsum}.

\noindent Let us define
\begin{eqnarray*}
h_j(a) &=&  \frac{1}{2} \sum_{\substack{B \subset \mathbb{B}_L\\ j \notin B  } } K_{L,B} \expE[ \Delta_j f(a) \Delta_j f(a^{B}) \;\big|\; a],
\\
h_j'(a) &=&  \frac{1}{2} \sum_{\substack{B \subset \mathbb{B}_L\\ j \notin B  } } K_{L,B} \expE[ \Delta_j f(a) |\Delta_j f(a^{B})| \;\big|\; a],
\end{eqnarray*}
so that $\expE[ T(a,a') | a] = \sum_{j} h_j(a)$ and $\expE[ T'(a,a') | a] = \sum_{j} h_j'(a)$. The terms $\var{\expE[ T(a,a') | a]}$ and $\var{\expE[ T'(a,a') | a]}$
in the RHS of \eqref{normalapproxsum}  can be estimated by the Efron-Stein inequality (\cite{Efron-Stein}, \cite{Steele86}). To this end, we introduce a third coefficient $a'' = \{a''(\ell)\}_{\ell \in \mathbb{B}_L} $ which is an independent copy of $a$ and $a'$. Let us define coefficient $a^k$ by
\begin{equation}
a^{k}(\ell) = \left\{ \begin{array}{cc} a(\ell), & \quad \ell \neq k \\ a''(\ell), & \quad \ell = k  \end{array} \right. \label{akdef}
\end{equation}
for all $\ell \in \mathbb{B}_L$. For any function $g(a,a'): \Omega_L \times \Omega_L \to \Rm$ we define
\begin{equation}
\Delta_k g(a,a') = g(a^k,a') - g(a,a').
\end{equation}
In particular, $\Delta_k g(a,a') = 0$ if $g(a,a')$ does not depend on $a_k$. We use the notation $g^k$ to denote the action of replacing $a(k)$ by $a''(k)$ in the argument of $g$:
$$
g(a,a')^k = g(a^k,a').
$$
Thus, $\Delta_k g(a,a') = (g(a,a'))^k - g(a,a')$. Let us emphasize that $a^k$ will always refer to \eqref{akdef} while $a^j$ refers to \eqref{ajdef}. The coefficients $a^j$ and $a^k$ have the same law, but the coefficient denoted by $a^j$ is not equivalent to $a^k$ even when the values of the indices $k$ and $j$ are the same.

Applying the Efron-Stein inequality and then Minkowski's inequality, we have
\begin{eqnarray}
\var{\expE[ T(a,a') | a]} =  \var{\sum_{j \in \mathbb{B}_L} h_j(a) } & \leq & \frac{1}{2} \sum_{k \in \mathbb{B}_L}   \expE\left[ \Big|  \sum_{ j \in \mathbb{B}_L}  \Delta_k h_j(a) \Big|^2\right] \no \\
& \leq & \frac{1}{2} \sum_{k \in \mathbb{B}_L} \left( \sum_{j \in \mathbb{B}_L} \expE[|\Delta_k h_j(a) |^2 ]^{\frac{1}{2}} \right)^2, \label{efstein1}
\end{eqnarray}
and likewise for $\var{\expE[ T'(a,a') | a]}$.
By Jensen's inequality,
\begin{eqnarray}
 \expE[|\Delta_k h_j(a) |^2 ]^{\frac{1}{2}} &\leq&  \expE[(\Delta_k \Delta_j f(a) )^2\overline{\Delta_j f(a^B)} ^2 ]^{\frac{1}{2}} + \expE[(\Delta_j f(a^k))^2( \Delta_k \overline{\Delta_j f(a^B)} )^2 ]^{\frac{1}{2}}, \label{hjensen}\\
 \expE[|\Delta_k h'_j(a) |^2 ]^{\frac{1}{2}}& \leq & \expE[|\Delta_k \Delta_j f(a) \overline{|\Delta_j f(a^B)|} |^2 ]^{\frac{1}{2}} + \expE[(\Delta_j f(a^k))^2 (\Delta_k \overline{|\Delta_j f(a^B)|} )^2 ]^{\frac{1}{2}}, \label{hjensen2}
\end{eqnarray}
where we have used the notation $\overline{\cdot}$ to indicate averaging with respect to the set $B$. Specifically, if $S_{L,j}$ denotes the collection of all subsets $B \subset \mathbb{B}_L$ which do not contain the index $j$  (recall Remark \ref{rem:avgSets}), then 
\begin{eqnarray}
 \overline{\Delta_j f(a^B)} & = &\sum_{\substack{B \subset  \mathbb{B}_L\\ j \notin B  } } K_{L,B} \Delta_j f(a^B) = \sum_{B \in S_{L,j}} K_{L,B} \Delta_j f(a^B), \label{Aavg}\\
\overline{|\Delta_j f(a^B)|} & = &\sum_{\substack{B \subset  \mathbb{B}_L\\ j \notin B  } } K_{L,B} |\Delta_j f(a^B)| = \sum_{B \in S_{L,j}} K_{L,B} |\Delta_j f(a^B)|. \label{Aavg2}
\end{eqnarray}

From \eqref{efstein1}, \eqref{hjensen}, \eqref{hjensen2}, and the triangle inequality in the form $|\overline{\Delta_j f(a^B)}| \leq \overline{|\Delta_j f(a^B)|} $,
we see that the right side of \eqref{normalapproxsum} is controlled by the sums
\begin{eqnarray}
S_1 &=& \sum_{k \in \mathbb{B}_L} \left( \sum_{j \in \mathbb{B}_L} \expE[(\Delta_k \Delta_j f(a))^2 \overline{|\Delta_j f(a^B)|}^2 ]^{\frac{1}{2}} \right)^2 , \label{s1def}
\\
S_2 &=& \sum_{ k \in \mathbb{B}_L} \left( \sum_{j \in \mathbb{B}_L} \expE[(\Delta_j f(a^k))^{2} (\Delta_k \overline{|\Delta_j f(a^B)|})^2 ]^{\frac{1}{2}} \right)^2, \label{s2def}
\\
S_3 &=& \sum_{j \in \mathbb{B}_L} \expE[ |\Delta_j f(a)|^6 ]^\frac{1}{2}. \label{s3def}
\end{eqnarray}

\medskip

\step{2} Sensitivity estimate of $f$ and control of $\Delta f_j$ and $\Delta_k \Delta_j f$.

\noindent As announced, we apply Theorem~\ref{theo:normalapprox} to the random variable
$$
f(a) = A_L = L^{-d} \sum_{x \in \T_L} (\xi + \nabla \phi_{L}) \cdot \aa (\xi + \nabla \phi_{L})(x).
$$
We thus need to compute and estimate the terms $\Delta_j f$ and $\Delta_k \Delta_j f$ appearing in \eqref{normalapproxsum}. 
Let us introduce the notation $\phi_{L}^j = \phi_{L}(x,a^j)$, $\phi_{L}^k = \phi_{L}(x,a^k)$ (recall \eqref{ajdef} and \eqref{akdef}). Given an edge $j \in \mathbb{B}_L$ such that $j = (x,x + \ee_\ell)$, we use the notation $\nabla \phi_{L}(j)$ for the scalar $[\nabla \phi_{L}(x)]_\ell = \phi_{L}(x + \ee_\ell) - \phi_{L}(x)$, which is the forward difference of $\phi_{L}$ across the edge $j$.
We claim that for all $j \neq k \in \B_L$ and $q\ge 1$,
\begin{eqnarray}
L^{qd} \expE[ |\Delta_j f(a)|^q] &\lesssim& 1 + \expE[ |\nabla \phi_{L}(j,a)|^{2q}], \label{deljfmoment}
\\
L^{qd} \expE[ \overline{|\Delta_j f(a^B)|}^q] &\lesssim& 1 + \expE[ |\nabla \phi_{L}(j,a)|^{2q}], \label{GammaBarBound}
\\
L^{2d} (\Delta_{k} |\Delta_{j} f(a)|)^2 & \lesssim & \Big(1 +  |\nabla \phi_{L}(j,a)|^2 + |\nabla \phi_{L}(j,a^k)|^2 + |\nabla \phi_{L}(j,a^j)|^2 \Big) \no \\
& & \times \left(|\nabla \Delta_k \phi_{L}(j,a)|^2 + |\nabla \Delta_k \phi_{L}(j,a^j)|^2 \right). \label{deljf-ant1}
\end{eqnarray}
By \eqref{eq:antoine-shorter} in the proof of Proposition~\ref{prop:lower-bound}, we have
$$
L^d \Delta_j f(a) \,=\, L^d (A_L^j-A_L) \,=\, \sum_{x \in \T_L} (\nabla \phi_{L}^j +  \xi) \cdot (\Delta_j \aa) (\nabla \phi_{L} +  \xi)(x). 
$$
We will say that the matrix $\Delta_j \aa = \text{diag}[ a^j(x,x + \ee_1) - a(x,x + \ee_1)\, , \, \dots \,,\, a^j(x,x+ \ee_d) - a(x,x + \ee_d)]$ is ``supported on edge $j$" to mean that if $j = (y,y + \ee_\ell)$ then $(\Delta_j \aa(x))_{m,n} = 0$ except when $x = y$ and $m = n = \ell$.  Hence
\br
L^d \Delta_j f(a) & = &  \nabla (\phi_{L}^j + x \cdot \xi)(j) (a'(j) - a(j)) \nabla( \phi_{L} +  x \cdot \xi)(j),  \label{Deljf}
\er
which immediately implies 
%
$$
L^d |\Delta_j f(a)| \,\lesssim \,1 + |\nabla \phi_{L}(j,a)|^2 + |\nabla \phi_{L}(j,a^j)|^2.
$$
This estimate yields  \eqref{deljfmoment}. Indeed, since $\phi_{L}$ is stationary with respect to integer shifts and because $a$ and $a^j$ have the same law, the random variables $|\nabla \phi_{L}(j,a)|$ and $|\nabla \phi_{L}(j,a^j)|$ are identically distributed, so that \eqref{deljfmoment} follows. 

Now we prove \eqref{GammaBarBound}. Jensen's inequality implies
$$
\overline{|\Delta_j f(a^B)|}^q = \big(\sum_{\substack{B \subset  \mathbb{B}_L \\ j \notin B }} K_{L,B}  |\Delta_j f(a^B)|\big)^q \leq \sum_{\substack{B \subset  \mathbb{B}_L\\ j \notin B }} K_{L,B}  |\Delta_j f(a^B)|^q .
$$
Therefore from \eqref{deljfmoment} we obtain
$$
L^{qd} \expE[\overline{|\Delta_j f(a^B)|}^q] \leq L^{qd} \sum_{\substack{B \subset  \mathbb{B}_L \\ j \notin B }} K_{L,B} \expE[ |\Delta_j f(a^B)|^q] \leq 1 + C_q \expE[|\nabla \phi_{L}(0,a)|^{2q}].
$$

It remains to prove \eqref{deljf-ant1}.  If $j \neq k$, then $(a^j)^k=(a^k)^j$, and therefore we have by the triangle inequality
\begin{multline*}\label{eq:antoine-simpler2}
\big|\Delta_k |\Delta_j f(a)|\big| \,\leq \, |\Delta_k \Delta_j f(a)|\,=\,|\Delta_k f(a^j)-\Delta_k f(a)| \,=\, 
|f((a^j)^k)-f(a^j)-(f(a^k)-f(a))|
\\
\,=\, |f((a^k)^j)-f(a^k)-(f(a^j)-f(a)) |=\,|\Delta_j f(a^k)-\Delta_j f(a)|.
\end{multline*}
We then insert \eqref{Deljf} and rearrange the terms:
\begin{eqnarray*}
&&\big|L^d\Delta_k |\Delta_j f(a)|\big|
\\ &\leq&|L^d (\Delta_j f(a^k)-\Delta_j f(a))|
 \\
&=&|\nabla (\phi_L^{jk}+x\cdot \xi)(j)(a'(j)-a(j))\nabla (\phi_L^k+x\cdot \xi)(j)
\\
&&\qquad-\nabla(\phi_L^j+x\cdot \xi)(j)(a'(j)-a(j))\nabla (\phi_L+x\cdot \xi)(j)|
\\
&=&|\nabla (\phi_L^{jk}-\phi_L^j)(j)(a'(j)-a(j))\nabla (\phi_L^k+x\cdot \xi)(j)
\\
&&\qquad -\nabla (\phi_L^j+x\cdot \xi)(j)(a'(j)-a(j)) \nabla (\phi_L-\phi_L^k)(j) |\\
&=&|\nabla \Delta_k \phi_L^j (j)(a'(j)-a(j))\nabla (\phi_L^k+x\cdot \xi)(j) 
-\nabla \Delta_k \phi_L(j) (a'(j)-a(j)) \nabla (\phi_L^j+x\cdot \xi)(j)|.
\end{eqnarray*}
This turns into \eqref{deljf-ant1} by Cauchy-Schwarz' inequality. If $k = j$, then $(a^j)^k = a^j$ so that $\big|\Delta_k |\Delta_j f(a)|\big| \leq |\Delta_k\Delta_j f(a)|=|\Delta_kf(a)|$. Hence we have 
\begin{equation}
L^{2d} (\Delta_{k} |\Delta_{j} f(a)|)^2 \leq L^{2d} |\Delta_{k} f(a)|^2  \lesssim \,1 + |\nabla \phi_{L}(j,a)|^4 + |\nabla \phi_{L}(j,a^k)|^4 \label{jeqkD2}
\end{equation}
when $j = k$.

\medskip

\step{3} Sensitivity estimate of $\nabla \phi_{L}$.
We claim that for $w_k := \Delta_k \phi_{L}(x,a)$, we have
\begin{equation}
|\nabla w_k(j)| \leq |\nabla_x \nabla_y G_L(j,k,a)| \left(|\nabla \phi_{L}(k,a^k)| + 1\right), \label{wkbound}
\end{equation}
where $\nabla_x \nabla_y G_L(j,k,a)$ is the mixed second gradient of the periodic Green function 
$G_L(x,y,a)$ of Lemma \ref{lem:Green-annealed}  (with $\mu = 0$) at edges $j= (x,x + \ee_\ell)$ and $k = (y,y + \ee_m)$.

Indeed, since the function $w_k$ satisfies
$$
- \nabla^* \cdot \aa_L \nabla w_k = \nabla^* \cdot (\Delta_k \aa_L)(\nabla \phi_{L}^k + \xi) \quad \text{in} \;\; \T_L,
$$
the Green representation formula yields
\br
w_k(x) & = & \sum_{y \in \T_L} G_L(x,y,a) \nabla^* \cdot (\Delta_k \aa_L)(\nabla \phi_{L}^k + \xi)(y) \no  \\
& = & - \sum_{y \in \T_L} \nabla_y G_L(x,y,a) \cdot (\Delta_k \aa_L)(\nabla \phi_{L}^k + \xi)(y).
\er
Hence,
\br
\nabla w_k(x) = - \sum_{y \in \T_L} \nabla_x \nabla_y G_L(x,y,a) \cdot (\Delta_k \aa_L)(\nabla \phi_{L}^k + \xi)(y),
\er
from which \eqref{wkbound} follows since $\Delta_k \aa$ is supported on edge $k$, and $|\Delta_j \aa| \leq 1$. 


\medskip

\step{4} Conclusion.

\noindent It remains to combine the estimates of Steps~2 and~3 to bound the sums $S_1$, $S_2$, and $S_3$ defined in \eqref{s1def}--\eqref{s3def}. 
We start with $S_1$.
By \eqref{deljf-ant1} in Step~2 and \eqref{wkbound} in Step~3, we have
\br
  |\Delta_k \Delta_j f(a)|^2 & \lesssim & L^{-2d}  \left(1 + |\nabla \phi_L(j,a)|^2 + |\nabla \phi_L(j,a^j)|^2 + |\nabla \phi_L(j,a^k)|^2 \right) \no \\
& & \times \left( (1 + |\nabla \phi_L(k, a^k)|^2)(\nabla_x \nabla_y G_L(j,k,a))^2  + \right. \no \\
&& \quad \quad \left. + (1 + |\nabla \phi_L(k, a^{jk})|^2)(\nabla_x \nabla_y G_L(j,k,a^j))^2\right). 
\er
for all $j \neq k$. All of the $|\nabla \phi_L|^2$ terms here have the same marginal distribution. Combined with 
\eqref{GammaBarBound}, this yields by H\"older's inequality with exponents $p > 1$ and $q = 4\frac{p}{p-1}$ (so that $\frac{2}{q} + \frac{1}{q} + \frac{1}{q} + \frac{1}{p} = 1$):
\begin{equation}
\expE[\overline{|\Delta_j f(a^B)|}^2 (\Delta_k \Delta_j f(a)  )^2 ]^{\frac{1}{2}} \, \lesssim \,  L^{-2d} \left(1 + \expE[|\nabla \phi_{L}|^{2q}]^{\frac{2}{q}}  \right) \expE[ |\nabla_x \nabla_y G_L(k,j)|^{2p}]^{\frac{1}{2p}}. \label{S1bound}
\end{equation}
By the annealed estimate \eqref{eq:annealed-nabla2G} (for $\mu=0$) in Lemma~\ref{lem:Green-annealed},
this turns into
\br
\expE[\overline{|\Delta_j f(a^B)|}^2(\Delta_k \Delta_j f(a) )^2 ]^{\frac{1}{2}} & \lesssim & L^{-2d}  \left(1 + \expE[|\nabla \phi_{L}|^{2q}]^{\frac{2}{q}}  \right) (1 + |j - k|)^{-d}.
\er
The same estimate holds in the case $j = k$; this is obtained by using (\ref{jeqkD2}) in the place of (\ref{deljf-ant1}) to bound $\Delta_k \Delta_j f$. Therefore, we have proved
$$
S_1 \lesssim L^{-3d} (\log(L))^2 \left(1 + \expE[|\nabla \phi_{L}|^{2q}]^{\frac{4}{q}}  \right).
$$

The estimate of the sum $S_2$ is very similar.  By Jensen's inequality (recall Remark \ref{rem:avgSets}), and the triangle inequality,
\begin{equation*}
\left(  \Delta_k (\overline{|\Delta_j f(a^B)|}) \right)^2 \,\leq \,\sum_{B \in S_{L,j}}  K_{L,B}  \left(  \Delta_k (|\Delta_j f(a^B)|) \right)^2
\,
\leq\,  \sum_{B \in S_{L,j}}  K_{L,B}  \left(  \Delta_k (\Delta_j f(a^B)) \right)^2.
\end{equation*}
If $k \in B \subset \B_L$, then $\Delta_k (\Delta_j f(a^B)) = 0$, since $\Delta_j f(a^B)$ doesn't depend on $a(k)$ in this case (although it does depend on $a'(k)$). On the other hand, if $k \notin B \cup \{j\}$, then $\Delta_k (\Delta_j f(a^B)) =  \Delta_k \Delta_j f(a^B)$ has the same as the law of $\Delta_k \Delta_j f(a)$. Consequently,
by \eqref{deljfmoment} and \eqref{deljf-ant1} in Step~2, \eqref{wkbound} in Step~3, and H\"older's inequality as above, we obtain
%
\br
& & \expE[(\Delta_j f(a^k))^{2} (\Delta_k \overline{|\Delta_j f(a^B)|} )^2 ] \no \\
& & \quad \quad \quad \quad \leq \sum_{B \in S_{L,j}}  K_{L,B}  \expE \left[ |(\Delta_j f(a^k)))^{2}  (\Delta_k \Delta_j f(a^B) )^2 \right] \no \\
& & \quad \quad \quad \quad \lesssim L^{-4d} \sum_{B \in S_{L,j}}  K_{L,B}  \left(1 + \expE[|\nabla \phi_{L}(j,a)|^{2q}]^{\frac{4}{q}}  \right)\expE[ |\nabla_x \nabla_y G_L(k,j,a)|^{2p}]^{\frac{1}{p}} \no \\
& & \quad \quad \quad \quad = L^{-4d} \left(1 + \expE[|\nabla \phi_{L}|^{2q}]^{\frac{4}{q}}  \right)\expE[ |\nabla_x \nabla_y G_L(k,j)|^{2p}]^{\frac{1}{p}}.  \no
\er
Therefore, $\expE[(\Delta_j f(a^k))^{2} (\Delta_k \overline{|\Delta_j f(a^B)|})^2 ]^{\frac{1}{2}} \lesssim  L^{-2d}  \left(1 + \expE[|\nabla \phi_{L}|^{2q}]^{\frac{2}{q}}  \right) (1 + |j - k|)^{-d}$, as above, which implies
$$
S_2 \lesssim L^{-3d} (\log(L))^2 \left(1 + \expE[|\nabla \phi_{L}|^{2q}]^{\frac{4}{q}}  \right).
$$

By \eqref{deljfmoment} we also have
$$
S_3 \lesssim L^{-2d} \left(1 + \expE[|\nabla \phi_{L}|^{12}]^\frac{1}{2} \right).
$$
In view of Theorem \ref{theo:normalapprox}, \eqref{efstein1}, \eqref{hjensen}, and these bounds on $S_1$, $S_2$, and $S_3$ we conclude that
$$
d_{K} \left( \frac{f - \expE[f]}{L^{-\frac{d}{2}} \sigma_L },Y\right) \lesssim \frac{L^{-\frac{d}{2}}}{\sigma_L^3}  \left(1 + \expE[|\nabla \phi_{L}|^{12}] ^{\frac{1}{2}} \right) + \frac{L^{-\frac{d}{2}} \log L}{\sigma_L^2} \left(1 + \expE[|\nabla \phi_{L}|^{2q}]^{\frac{2}{q}}  \right),
$$
as desired.


\section*{Acknowledgements}
AG acknowledges financial support from the European Research Council under
the European Community's Seventh Framework Programme (FP7/2014-2019 Grant Agreement
QUANTHOM 335410). JN acknowledges financial support from a National Science Foundation grant, DMS-1007572. The authors acknowledge the hospitality of Stanford University in the fall 2013, where this work was initiated.



\begin{thebibliography}{10}

\bibitem{Armstrong-Smart-14} 
S. N. Armstrong and C. K. Smart.
\newblock Quantitative stochastic homogenization of convex integral functionals.
\newblock Preprint 2014, \url{http://arxiv.org/abs/1406.0996}.

\bibitem{Biskup-11} 
M.~Biskup.
\newblock Recent progress on the random conductance model.
\newblock {\em Prob. Surveys} 8:294-373, 2011.


\bibitem{Biskup-Salvi-Wolff-14}
M.~Biskup, M.~Salvi, and T.~Wolff.
\newblock {A central limit theorem for the effective conductance: I. Linear
  boundary data and small ellipticity constrasts}.
\newblock {\em Commun. Math. Phys.}, 2014.

\bibitem{Chatterjee-08}
S.~Chatterjee.
\newblock A new method of normal approximation.
\newblock {\em Ann. Probab.}, 36(4):1584--1610, 2008.

\bibitem{Chatterjee-09}
S.~Chatterjee.
\newblock Fluctuation of eigenvalues and second order Poincar\'e inequalities.
\newblock {\em Probab. Theory Relat. Fields}, 143:1-40, 2009.



\bibitem{Chatterjee-14}
S.~Chatterjee.
\newblock A short survey of Stein's method.
\newblock Preprint, 2014, \url{http://arxiv.org/abs/1404.1392}

\bibitem{E-12}
Weinan E.
\newblock {\em Principles of multiscale modeling}.
\newblock Cambridge University Press, Cambridge, 2011.

\bibitem{Efendiev-Hou-09}
Y.~Efendiev and T.~Y. Hou.
\newblock {\em Multiscale finite element methods}, volume~4 of {\em Surveys and
  Tutorials in the Applied Mathematical Sciences}.
\newblock Springer, New York, 2009.
\newblock Theory and applications.

\bibitem{Efron-Stein}
B.~Efron and C.~Stein. 
\newblock The jackknife estimate of variance.
\newblock {\em Annals of Statistics}, 9:586-596, 1981.

\bibitem{EGMN-12}
A.-C. Egloffe, A.~Gloria, J.-C. Mourrat, and T.~N. Nguyen.
\newblock Random walk in random environment, corrector equation, and
  homogenized coefficients: from theory to numerics, back and forth.
\newblock {\em IMA J. Num. Anal.}, 2014.
\newblock doi: 10.1093/imanum/dru010.

\bibitem{Gloria-Mourrat-10}
A.~Gloria and J.-C. Mourrat.
\newblock Spectral measure and approximation of homogenized coefficients.
\newblock {\em Probab. Theory. Relat. Fields}, 154(1), 2012.

\bibitem{Gloria-Neukamm-Otto-11}
A.~Gloria, S.~Neukamm, and F.~Otto.
\newblock An optimal quantitative two-scale expansion in stochastic
  homogenization of discrete elliptic equations.
\newblock {\em M2AN Math. Model. Numer. Anal.}, 2014.
\newblock Special issue 2014: Multiscale problems and techniques.

\bibitem{Gloria-Neukamm-Otto-14}
A.~Gloria, S.~Neukamm, and F.~Otto.
\newblock Quantification of ergodicity in stochastic homogenization: optimal
  bounds via spectral gap on {G}lauber dynamics.
\newblock {\em Invent. Math.}, 2014.
\newblock DOI 10.1007/s00222-014-0518-z.

\bibitem{Gloria-Neukamm-Otto-14b}
A.~Gloria, S.~Neukamm, and F.~Otto.
\newblock A regularity theory for random elliptic operators.
\newblock Preprint 2014, \url{http://arxiv.org/abs/1409.2678}.

\bibitem{Gloria-Otto-09}
A.~Gloria and F.~Otto.
\newblock An optimal variance estimate in stochastic homogenization of discrete
  elliptic equations.
\newblock {\em Ann. Probab.}, 39(3):779--856, 2011.

\bibitem{Gloria-Otto-09b}
A.~Gloria and F.~Otto.
\newblock An optimal error estimate in stochastic homogenization of discrete
  elliptic equations.
\newblock {\em Ann. Appl. Probab.}, 22(1):1--28, 2012.

\bibitem{Gloria-Otto-14}
A.~Gloria and F.~Otto.
\newblock Quantitative estimates on the periodic approximation of the corrector in stochastic homogenization.
\newblock Preprint 2014, \url{http://arxiv.org/abs/1409.1161}.

\bibitem{Gloria-Otto-14b}
A.~Gloria and F.~Otto.
\newblock Quantitative results on the corrector equation in stochastic homogenization.
\newblock Preprint 2014, \url{http://arxiv.org/abs/1409.0801}.

\bibitem{Gloria-Marahrens-14}
A.~Gloria and D. Marahrens.
\newblock Annealed estimates on the Green functions and uncertainty quantification.
\newblock Preprint 2014, \url{http://arxiv.org/abs/1409.0569}.

\bibitem{KFGMJ-03}
T.~Kanit, S.~Forest, I.~Galliet, V.~Mounoury, and D.~Jeulin.
\newblock Determination of the size of the representative volume element for
  random composites: statistical and numerical approach.
\newblock {\em Int. J. Sol. Struct.}, 40:3647--3679, 2003.

\bibitem{Kipnis-Varadhan-86}
C.~Kipnis and S.R.S. Varadhan.
\newblock Central limit theorem for additive functional of reversible {M}arkov
  processes and applications to simple exclusion.
\newblock {\em Commun. Math. Phys.}, 104:1--19, 1986.


\bibitem{Kozlov79} S.M. Kozlov.
\newblock The averaging of random operators.
\newblock {\em Math. USSR Sb.}, 109, 188--202, 1979.


\bibitem{LRP-15} R. Lachi\`eze-Rey and G. Peccati.
\newblock New Kolmogorov bounds for functionals of binomial point processes.
\newblock Preprint, 2015.

\bibitem{Marahrens-Otto-13}
D.~Marahrens and F.~Otto.
\newblock Annealed estimates on the {Green} function.
\newblock Preprint, \url{http://arxiv.org/abs/1304.4408}

\bibitem{Mathieu-08}
P.~Mathieu.
\newblock Quenched invariance principles for random walks with random
  conductances.
\newblock {\em J. Stat. Phys.}, 130(5):1025--1046, 2008.

\bibitem{Mourrat-10}
J.-C. Mourrat.
\newblock Variance decay for functionals of the environment viewed by the
  particle.
\newblock {\em Ann. Inst. H.~Poincar\'e Probab. Statist.}, 47(11):294--327,
  2011.

\bibitem{Naddaf-Spencer-98}
A.~Naddaf and T.~Spencer.
\newblock Estimates on the variance of some homogenization problems.
\newblock Preprint, 1998.

\bibitem{Nolen-11}
J.~Nolen.
\newblock Normal approximation for a random elliptic equation.
\newblock {\em Probab. Th. Rel. Fields}, 159(3):661-700, 2014.

\bibitem{Nolen-14}
J.~Nolen.
\newblock Normal approximation for the net flux through a random conductor.
\newblock Preprint 2014, \url{http://arxiv.org/abs/1406.2186}

\bibitem{Papanicolaou-Varadhan-79}
G.C. Papanicolaou and S.R.S. Varadhan.
\newblock Boundary value problems with rapidly oscillating random coefficients.
\newblock In {\em Random fields, {V}ol. {I}, {II} ({E}sztergom, 1979)},
  volume~27 of {\em Colloq. Math. Soc. J\'anos Bolyai}, pages 835--873.
  North-Holland, Amsterdam, 1981.

\bibitem{Rossignol-12}
R.~Rossignol.
\newblock Noise-stability and central limit theorems for effective resistance
  of random electric networks.
\newblock Preprint 2012, \url{http://arxiv.org/abs/1206.3856v1}

\bibitem{Sidoravicius-Sznitman-04}
V.~Sidoravicius and A.-S. Sznitman.
\newblock Quenched invariance principles for walks on clusters of percolation
  or among random conductances.
\newblock {\em Probab. Theory Related Fields}, 129(2):219--244, 2004.

\bibitem{Steele86}
J.M.~Steele. 
\newblock An Efron-Stein inequality for nonsymmetric statistics
\newblock {\em Ann. Statistics}, 14:753-758, 1986.

\bibitem{Torquato-02}
S.~Torquato.
\newblock {\em Random heterogeneous materials: microstructure and macroscopic
  properties}.
\newblock Springer-Verlag, New-York, 2002.

\bibitem{Wehr-97}
J.~Wehr.
\newblock A lower bound on the variance of conductance in random resistor
  network.
\newblock {\em J. Statist. Phys.}, 86(5-6):1359--1365, 1997.

\bibitem{Wehr-Aiz-90}
J.~Wehr and M.~Aizenman.
\newblock Fluctuations of extensive functions of quenched random couplings.
\newblock {\em J. Statist. Phys.}, 60(3-4):287--306, 1990.

\bibitem{E-Yue-07}
X.~Yue and W.~E.
\newblock The local microscale problem in the multiscale modeling of strongly
  heterogeneous media: effects of boundary conditions and cell size.
\newblock {\em J. Comput. Phys.}, 222(2):556--572, 2007.

\end{thebibliography}

\end{document}